\newtheorem{thm}{Theorem}
\newtheorem*{thm*}{Theorem}
\newtheorem{thmx}{Theorem}
\newtheorem{prop}{Proposition}
\newtheorem{cor}{Corollary}
\newtheorem{lem}{Lemma}
\newtheorem{conj}{Conjecture}
\newtheorem{fact}{Fact}
\newtheorem{question}{Question}
\newtheorem*{rep@theorem}{\rep@title}
\newcommand{\newreptheorem}[2]{%
\newenvironment{rep#1}[1]{%
 \def\rep@title{#2 \ref{##1}}%
 \begin{rep@theorem}}%
 {\end{rep@theorem}}}
\theoremstyle{definition}
\newtheorem{dfn}[thm]{Definition}
\newtheorem{claim}[thm]{Claim}
\newcommand{\Vol}{\text{Vol}}
\newcommand{\Isop}{\text{Isop}}
\newcommand{\N}{\mathbb{N}}
\newcommand{\Z}{\mathbb{Z}}
\newcommand{\R}{\mathbb{R}}
\newcommand{\C}{\mathbb{C}}
\newcommand{\conbod}{\mathcal{K}^n}
\newcommand{\Sph}{\mathbb{S}^{n-1}}
\begin{document}

\title[Bezout for mixed volumes]{Extremizers in Soprunov and Zvavitch's Bezout inequalities for mixed volumes}
\author[M. Szusterman]{Maud Szusterman*}
\address[Maud Szusterman]{Department of Mathematics\\ Universite Paris Cit\'e\\ France}
\thanks{ * The author was supported by the National Science Foundation under Grant DMS-1929284 while the author was in residence at the Institute for Computational and Experimental Research in Mathematics in Providence, RI, during the Harmonic Analysis and Convexity program.}

\email{maud.szusterman@imj-prg.fr}

\maketitle

%\begin{abstract}  
 %A theorem due to Bernstein-Khovanskii-Kushnirenko allows one to deduce from Bezout inequalities (for real polynomials) certain inequalities of mixed volumes.These inequalities translate to non-negativity of a bilinear form $F_{\Delta}$ associated with the simplex. In 2015, Soprunov and Zvavitch conjectured that $n$-simplices are the only convex bodies for which the bilinear form $F_K$ is (always) non-negative. Together with Saroglou [SSZ2], they investigated an $n$-multilinear form, and they proved its non-negativity characterizes $n$-simplices. However the former conjecture remains open. We present two new necessary conditions for non-negativity of $F_K$: a condition expressed via the surface area measure $S_K$, and a geometric condition which applies to a subclass of convex bodies.
 %\end{abstract}

%%% new abstract wanted

\begin{abstract}
    In \cite{SZ}, Soprunov and Zvavitch have translated the Bezout inequalities (from Algebraic Geometry) into inequalities of mixed volumes satisfied by the simplex. They conjecture this set of inequalities characterizes the simplex, among all convex bodies in $\R^n$. Together with Saroglou, they proved the characterization among all polytopes \cite{SSZ1} and, for a larger set of inequalities, among all convex bodies \cite{SSZ2}. The conjecture remains open for $n\geq 4$. In this work, we investigate necessary conditions on the structure of the boundary of a convex body $K$, for $K$ to satisfy all inequalities. In particular, we obtain a new solution of the $3$-dimensional case.
\end{abstract}

%\begin{abstract}
%In \cite{SZ}, Soprunov and Zvavitch have derived inequalities of mixed volumes for the $n$-simplex $\Delta_n$: these inequalities prove that $b_2(\Delta_n)=1$, where $b_2(.)$ is a related affine-invariant quantity defined on $\mathcal{K}_0^n$, which can be defined via bilinear forms on $(\mathcal{K}^n)^2$. They conjectured that the simplex is the only minimizer of $b_2$. When investigating a related constant arising from $(n-1)$-linear forms, they obtained a characterization of the simplex as the only minimizer, which in particular solves the conjecture in dimension $3$. When $n\geq 4$, only partial results towards the conjecture are known, for instance the necessity for $K$ of not being \emph{weakly} decomposable (a corollary of this necessary condition is the characterization of the $n$-simplex as the unique minimizer of $b_2$ among all $n$-polytopes). Building on some of these partial results, and on a new necessary condition expressed via the surface area measure of $K$, we present a new proof of the characterization of $\Delta_3$ as the only minimizer of $b_2(.)$ in $\R^3$.
%\end{abstract}

\section{Introduction}
\label{sec:intro}

We denote by $\mathcal{K}^n$ the class of compact convex sets in $\R^n$ : if $K\in \mathcal{K}^n$ has non-empty interior, then $K$ is called a convex body. We denote $\mathcal{K}_0^n$ the subclass of $\mathcal{K}^n$, whose elements are convex bodies containing the origin in their interior. If $K, L\in \mathcal{K}^n$, denote $K+L=\{a+b, a\in K, b\in L\}$ their Minkowski sum. We identify $\R^n$ as a subset of $\mathcal{K}^n$ (corresponding to singletons), for instance, if $x\in \R^n$, then $K+x$ denotes the corresponding translate of $K$.

 We denote by $\Vol_n(K)$ the $n$-dimensional volume of $K$, i.e. its Lebesgue measure (in $\R^n$), and we denote $B_2^n$ the unit Euclidean ball in $\R^n$. We use the notation $\kappa_n=\Vol_n(B_2^n)$.

\noindent J. Steiner has shown (see \cite[p.223]{Sch1}) that there exists a family of coefficients $(w_k)_{k=0}^n \in \R_+^{n+1}$, depending only on $K$,  such that, for any $t>0$, 
\begin{equation}
\label{steiner}
\Vol_n(K+t B_2^n)=\sum_{k=0}^n {n\choose k} w_k t^{n-k}.
\end{equation}
The latter is known as Steiner's formula. The real numbers $(w_k)$ are called quermassintegrals, and contain a lot of geometric information on $K$: %one may see from the formula \eqref{steiner} that 
for instance, one may derive from \eqref{steiner} that $w_n=\Vol_n(K)$, $w_0=\kappa_n$, that $w_{n-1}=\frac{1}{n} \Vol_{n-1}(\partial K)$, where $\partial K$ denotes the (topological) boundary of $K$, and $\Vol_{n-1}(\partial K)$ is the surface area of $K$; or that $w_1=\kappa_n w(K)$ is proportionnal to the mean-width $w(K)$.% of $K$.

Minkowski has proved that a similar formula holds when replacing $B_2^n$ by an arbitrary convex body $L$: $\Vol_n(K+tL)$ is a polynomial in $t$, of degree $n$, with non-negative coefficients. For $0\leq k\leq n$, the coefficient of $t^{n-k}$ is usually denoted ${n \choose k} V_n(K[k],L[n-k])$, where $V_n(.)$ is the mixed volume functional, defined on $(\mathcal{K}^n)^n$, and where $(K[k], L[n-k])$, means $K$ appears $k$ times as an argument, and $L$, $n-k$ times. (see Section \ref{sec_notation} page $4$ for more details).
%there exists $(w_k(K,L))_{k=0}^n \in \R_+^{n+1}$, such that $\Vol_n(K+tL)=
%\sum_{k=0}^n {n\choose k} w_k(K,L) t^{n-k}$. The coefficients $w_k(K,L)$ are usually denoted $w_k(K,L)=V_n(K[k],L[n-k])$, where $V_n(.)$ is the mixed volume functional, defined on $(\mathcal{K}^n)^n$, and where $(K[k], L[n-k])$, means $K$ appears $k$ times as an argument, and $L$, $n-k$ times. (see Section \ref{sec_notation} page $4$ for more details).

More generally, Minkowski has proven that, for any $m\geq 2$, and any $(K_1, ... , K_m)\in \left(\mathcal{K}^n\right)^m$, $\left((t_i)_{1\leq i\leq m} \mapsto \Vol_n(t_1 K_1 +...+t_m K_m) \right)$, is a (homogeneous) polynomial whose coefficients only depend on $(K_1, ... , K_m)$. This theorem allows to define the mixed volume of an arbitrary tuple $(K_1, ... , K_n) \in (\mathcal{K}^n)^n$, as $V_n(K_1, ... , K_n)=\frac{1}{n!} \frac{\partial P}{\partial t_1 ... \partial t_n}$, where $P(t_1, ... , t_n)=\Vol_n(t_1K_1+ ... +t_n K_n)$. This definition led to the birth of the Brunn-Minkowski theory (see for instance \cite[Chapters 5-7]{Sch1}).

%the sequence $(w_k(K,L), 0\leq k \leq n)$,
For any $(K,L) \in (\mathcal{K}_0^n )^2$, the sequence $(a_k)_{0\leq k\leq n}:=(V_n(K[k],L[n-k]), 0\leq k \leq n)$ is log-concave, yielding in particular $a_{n-k}^n \geq a_n^{n-k} a_0^k$ for any $k=0,1, \dots, n$.  These are particular instances of Alexandrov-Fenchel inequalities, see \cite[(7.63), p.401]{Sch1}. In particular, choosing $k=1$ and $L=B_2^n$ one recovers the classical isoperimetric inequality:
%$k\in [|0,n|]$
\begin{equation}
\label{isoperim}
\frac{\Vol_{n-1}(\partial K)^n}{\Vol_n(K)^{n-1}} \geq \frac{\Vol_{n-1}(\mathbb{S}^{n-1})^n}{\Vol_n(B_2^n)^{n-1}}.
\end{equation}
%
%Indeed $\frac{\Vol_{n-1}(\mathbb{S}^{n-1})^n}{\Vol_n(B_2^n)^{n-1}}=\frac{(n\kappa_n)^n}{\kappa_n^{n-1}}=n^n \kappa_n$, hence the inequality \eqref{isoperim} equivalently rewrites as $w_{n-1}^{n-1}=\left( n^{-1} \Vol_{n-1}(\partial K)\right)^n \geq \Vol_n(K)^{n-1} \kappa_n=w_n^{n-1} w_0$, which is the first Brunn-Minkowski's inequality.
Inequality \eqref{isoperim} tells us that $\rho(K):=\frac{\Vol_{n-1}(\partial K)^n}{\Vol_n(K)^{n-1}}$ is minimized when $K$ is a Euclidean ball (indeed, these are the only minimizers). Note that $\rho(K)$ can be arbitrarily large (consider for instance a cylinder of height $h\to \infty$). However, a reverse isoperimetric inequality holds: K. Ball has introduced the following affine-invariant min-ratio: ${\text{Iso}}(K)=\min_{T\in O(n)} \rho(TK)$, where $T$ runs through linear isometries $T\in O(n)$. Ball proved that ${\text{Iso}}(K)$ is maximized when $K$ is an $n$-simplex, and that simplices are the only maximizers. 
%On the class $\R^n+\mathcal{K}_0^n$ of convex bodies (compact convex sets with non-empty interior), consider the equivalence relation

Denote by $\text{Aff}^*(\R^n)$ the group of reversible affine transforms of $\R^n$. Within the class of convex bodies, consider the equivalence relation $K' \sim K$ if $K'=TK$ for some $T\in \text{Aff}^*(\R^n)$: let us denote by $\mathcal{AK}_0^n$ the quotient set for this relation (for instance there is only one $n$-simplex in $\mathcal{AK}_0^n$). Since ${\text{Iso}}(K)={\text{Iso}}(TK)$, for any $K\in \mathcal{K}_0^n$, and any $T\in \text{Aff}^*(\R^n)$,  ${\text{Iso}}$ can be seen as defined on $\mathcal{AK}_0^n$.

Let $\Delta$ denote an arbitrary $n$-simplex in $\R^n$. 
Note that K. Ball's reverse isoperimetric inequality $\text{Iso}(K)\leq \text{Iso}(\Delta)$, can be reformulated in terms of mixed volumes: for any $K\in \mathcal{K}_0^n$, there exists an ellipsoid $\mathcal{E}$, such that:
 $$V(K[n-1], \mathcal{E})^n V(\Delta)^{n-1} \leq V(K)^{n-1} V(\Delta[n-1], \mathcal{E})^{n}.$$
In this work, we will study another set of mixed volume inequalities involving the simplex, introduced by Soprunov and Zvavitch in \cite{SZ}, which they called \emph{Bezout inequalities for mixed volumes}. More precisely, it was shown in \cite{SZ} that for any $(A,B)\in (\mathcal{K}^n)^2$, $\Delta$ satisfies:
\begin{equation}V_n(A,B,\Delta[n-2])V_n(\Delta) \leq V_n(A, \Delta[n-1])V_n(B,\Delta[n-1])
\label{eq_Bezout_simplex}
\end{equation}
where $V_n(K)$ is a shortcut for $V_n(K[n])=\Vol_n(K)$. They gave two proofs of this inequality: a short, geometric one \cite[Theorem 2.5]{SZ}, and another one based on the Bernstein-Kushnirenko-Khovanskii Theorem (see for instance \cite[Theorem 1]{EKh}).

The following inequality is known as Fenchel inequality (see \cite[Prop 2.1]{FGM} for a proof): 
\begin{equation}
\label{eqn:relaxedbezout}
\forall K, A, B \in \mathcal{K}_0^n, \hspace{3mm}
V_n(A,B,K[n-2])V_n(K) \leq 2V_n(A, K[n-1])V_n(B,K[n-1]).
\end{equation}
This inequality is sharp: taking $K=O_n=\{x\in \R^n: \sum |x_i|\leq 1\}$ to be the cross-polytope, and $A, B$ two well-chosen segments, then equality in \eqref{eqn:relaxedbezout} holds (nota bene: for any $K\in\mathcal{K}_0^n$, equality in \eqref{eqn:relaxedbezout}, can only hold if both $A,B$ are segments, see Lemma \ref{fenchel} below).

This motivates the following definition: 
$$b_2(K):=\max \frac{V_n(A,B,K[n-2])V_n(K)}{V_n(A, K[n-1])V_n(B,K[n-1])} ,$$ where the maximum is over pairs $(A,B)\in (\mathcal{K}_0^n)^2$ (or equivalently, over pairs $(A,B)\in (\mathcal{K}^n \setminus \R^n)^2$). Taking $A=B=K$, one sees that $b_2(K)\geq 1$. On the other hand, Lemma \ref{fenchel} implies that $b_2(K)\leq 2$. %Hence $1\leq b_2(K) \leq 2$ for any convex body $K$ (lower bound follows by taking $A=B=K$, upper bound follows from \ref{fenchel}).
% (or equivalently\footnote{even without the bound \eqref{eqn:relacedbezout}, the equivalence simply follows from continuity of mixed volumes}, over pairs $(A,B)\in (\mathcal{K}^n \setminus \R^n)^2$)

Observe that $b_2$ is affine-invariant, and that it may also be defined as the least $b\geq 1$ such that $F_{K,b}\geq 0$, where $F_{K,b}$ is the bilinear form on $(\mathcal{K}^n)^2$ defined by $F_{K,b}(A,B)=b V_n(A, K[n-1])V_n(B,K[n-1]) -V_n(A,B,K[n-2])V_n(K)$.

With these notations, the Conjecture $1.1$ from \cite{SZ} may be rephrased as follows.

\begin{conj}
\label{conj:main}
Let $K\in \mathcal{K}_0^n$ be a convex body such that $b_2(K)=1$. Then $K$ is an $n$-simplex.
\end{conj}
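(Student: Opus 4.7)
The strategy is to translate the hypothesis $b_2(K)=1$ into a constraint on mixed area measures of $K$, use it to force polytopality, and then appeal either to the polytopal case from \cite{SSZ1} or to a direct combinatorial argument in dimension~$3$. The first step is straightforward: by the integral representation $V_n(A,C_1,\dots,C_{n-1}) = \tfrac{1}{n}\int_{\Sph} h_A\, dS(C_1,\dots,C_{n-1},\cdot)$, the hypothesis is equivalent to
\[
  \int_{\Sph} h_A \, d\nu_B \geq 0 \quad \text{for all } A\in\mathcal{K}^n,\ B\in\mathcal{K}_0^n,
\]
where $\nu_B := V_n(B, K[n-1]) \, S(K[n-1], \cdot) - V_n(K) \, S(B, K[n-2], \cdot)$. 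Using the symmetry of mixed volumes one verifies $\int h_K\, d\nu_B = 0$, so $A=K$ is an equality case of the variational inequality for every $B$.

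The substantive step is to extract local information on the boundary of $K$ from this saturation. Varying $B=K+t\delta B$ close to $K$ only reproduces the Alexandrov--Fenchel inequality, so one must use \emph{transverse} test bodies. My plan is to probe the functional $A\mapsto \int h_A\, d\nu_B$ with bodies $A,B$ whose mixed area measures are concentrated on small spherical caps $U\subset\Sph$. Pairing the variational inequality with the equality at $A=K$ should pin down the restriction $S(K[n-1])|_U$ on each cap; running this as $U$ varies over $\Sph$ one concludes that $S(K[n-1])$ is purely atomic, so $K$ is a polytope. From here, \cite{SSZ1} finishes the argument in general dimension. In $n=3$, one can argue directly: testing the inequality on pairs of segments $A=[0,e_i]$, $B=[0,e_j]$ yields, after unwinding through the Cauchy formula $V_3([0,e_i],[0,e_j],K) = \tfrac{1}{6}w_{e_k}(K)$, a system of relations between $V_2(K|e_i^\perp)\,V_2(K|e_j^\perp)$ and $V_3(K)\,w_{e_k}(K)$; combined with Minkowski's equilibrium $\sum_i a_i u_i = 0$ on the facet normals, these relations force the number of facets to be exactly $n+1=4$, pinning $K$ down to a tetrahedron.

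The main obstacle is the local-to-global extraction in the middle step: the signed measure $\nu_B$ is generically \emph{not} zero, even for the simplex. Indeed, for $K=\Delta$ and $B=[0,u]$ the measures $S(\Delta[n-1])$ and $S([0,u],\Delta[n-2])$ have disjoint supports (the first concentrated on the $n+1$ facet normals, the second on $u^\perp\cap\Sph$), so the saturation $\int h_K d\nu_B=0$ does \emph{not} propagate to a measure identity. One must instead carefully combine the choice of $B$ with a tailored family of $A$'s to isolate the contribution of each portion of $\operatorname{supp} S(K[n-1])$, a step complicated by the fact that the cone of support functions, while rich, does not span all non-negative continuous functions. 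It is precisely this local-to-global extraction that resists in dimension $n\geq 4$; in $n=3$ the one-dimensional geometry of the equators $u^\perp\cap\Sph$ makes the analysis tractable, which is how one recovers a new proof of the $3$-dimensional case.
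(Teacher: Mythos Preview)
The statement you are attempting to prove is an \emph{open conjecture}: the paper states explicitly that it remains unsolved for $n\geq 4$, and only establishes the case $n=3$ (Corollary~\ref{cor:intro}). So a full proof proposal should be read with care, and indeed you yourself flag the central gap: the ``local-to-global extraction'' of the atomicity of $S_K$ from the saturation $\int h_K\,d\nu_B=0$ does not go through, because $\nu_B$ is not zero even for the simplex. Your plan to localise with test bodies $A,B$ whose mixed area measures concentrate on small caps is not supported by any concrete mechanism; you would need the inequalities $\int h_A\,d\nu_B\geq 0$ (together with equality at $A=K$) to force $\nu_B|_U=0$ or to control $S_K|_U$, but since the cone of support functions is one-sided, a single equality case does not pin down a signed measure. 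This is precisely the obstruction the paper works around by proving instead the weaker necessary conditions $\Omega_0=\emptyset$, $|\Omega_{n-1}|<\infty$, and $S_K(\Omega_{n-2})=0$ via explicit Wulff-shape perturbations.

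Your proposed $n=3$ argument is also incomplete. Testing on segments $A=[0,e_i]$, $B=[0,e_j]$ only yields the three \emph{inequalities}
\[
\tfrac{3}{2}\,w_{e_k}(K)\,V_3(K)\ \leq\ V_2(K|e_i^\perp)\,V_2(K|e_j^\perp),\qquad \{i,j,k\}=\{1,2,3\},
\]
not equalities, and there is no mechanism by which three such inequalities (for a fixed frame) force polytopality, let alone that $K$ has exactly four facets; many convex bodies satisfy them strictly. The invocation of Minkowski's equilibrium $\sum a_iu_i=0$ presupposes that $K$ is already a polytope. By contrast, the paper's route to $n=3$ is a case analysis on the decomposition $\Omega=\Omega_0\cup\Omega_1\cup\Omega_2$: Theorem~\ref{positivecurv} rules out $\Omega_0\neq\emptyset$, Theorem~\ref{thm:infinite} rules out infinite $\Omega_2$, and the main Theorem~\ref{thm:main} rules out $S_K(\Omega_1)>0$; what remains is a polytope, handled by \cite{SSZ1}. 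Each of these excluding conditions is proved by exhibiting an explicit pair $(A,B)$ (a Wulff perturbation and a tailored companion) violating the Bezout inequality, which is quite different from your variational saturation idea.
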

In other words, $\Delta$ would be the only minimizer of $b_2$, among all convex bodies in $\R^n$. Up to the best of the author's knowledge, no convex body (seen in $\mathcal{AK}_0^n$) other than $O_n$, is known as a maximizer of $b_2$. Conjecture \ref{conj:main} is the primary focus of the present work.

Similarly as $b_2(K)$, let us introduce another affine-invariant \emph{Bezout} constant: define the $n$-linear form $G_{K,b}(A_1, ... , A_n)=bV_n(A_1, ... , A_n)V_n(K)-V_n(A_2, ... , A_n, K)V_n(A_1, K[n-1])$. We write $G_{K,b} \geq 0$ to mean $G_{K,b}(A_1, ... , A_n)\geq 0$ for all $(A_i)\in (\mathcal{K}^n)^n$. Define $b(K)=\inf \{b\geq 1: G_{K,b} \geq 0\}$. It was shown in \cite{SSZ2} that $b(\Delta)=1$, indeed the same geometric proof (as for $b_2(\Delta)=1$) works. In Section \ref{sec:zeros} below, we give an alternative proof, based on the Bernstein-Khovanskii-Kushnirenko's Theorem\cite{Be,Khov,Kush}.

In \cite{SSZ2}, Saroglou, Soprunov and Zvavitch have obtained the following characterization:
\begin{thm}
\label{thm:b(K)}
Let $K\in \mathcal{K}_0^n$ be such that $b(K)=1$. Then $K$ is an $n$-simplex.
\end{thm}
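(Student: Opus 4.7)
The plan is to combine the multilinear strength of the hypothesis $G_{K,1}\ge 0$ (which, after iteration, yields a multiplicative strong Bezout inequality) with a rigidity analysis of its equality case to pin down the surface-area structure of $K$.

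Step one: iteration. Starting from $G_{K,1}(A_1,\dots,A_n)\ge 0$, that is
\[
V_n(A_1,\dots,A_n)\,V_n(K)\;\le\;V_n(A_2,\dots,A_n,K)\,V_n(A_1,K[n-1]),
\]
I apply the hypothesis again to $V_n(A_2,\dots,A_n,K)$ with $A_2$ placed in the distinguished position, and iterate $n$ times. Using the symmetry of the mixed volume, this yields the multiplicative strong Bezout inequality
\begin{equation}
V_n(A_1,\dots,A_n)\,V_n(K)^{n-1}\;\le\;\prod_{i=1}^{n}V_n(A_i,K[n-1]),
\label{plan:strongBezout}
\end{equation}
valid for every $(A_1,\dots,A_n)\in (\mathcal{K}_0^n)^n$. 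Two specializations are informative: setting $A_1=\cdots=A_n=A$ recovers Minkowski's first inequality $V_n(A)V_n(K)^{n-1}\le V_n(A,K[n-1])^n$, whose equality case forces $A$ to be homothetic to $K$; setting $A_i=[0,e_i]$ along an orthonormal frame reduces \eqref{plan:strongBezout} to the sharpened Loomis--Whitney bound
\[
V_n(K)^{n-1}\;\le\;\frac{n!}{n^n}\prod_{i=1}^{n}V_{n-1}(K|e_i^\perp),
\]
valid for every frame $(e_i)$. The factor $n!/n^n<1$ makes this strictly stronger than classical Loomis--Whitney, and a direct computation shows it is saturated by the standard $n$-simplex in its canonical frame.

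Step two: extract rigidity via mixed surface-area measures. Fixing $A_2=\cdots=A_n=L$ in $G_{K,1}\ge 0$ and using the representation $nV_n(A,M[n-1])=\int h_A\,dS_M$, I derive
\[
\int_{\mathbb{S}^{n-1}} h_{A_1}\,d\mu_L\;\ge\;0 \qquad\forall A_1\in\mathcal{K}_0^n,
\]
where $\mu_L:=V_n(L[n-1],K)\,S_K-V_n(K)\,S_L$ is a balanced signed measure on $\mathbb{S}^{n-1}$ (both summands satisfy Minkowski's centering condition $\int_{\mathbb{S}^{n-1}} u\,dS=0$). Varying $L$ over $\mathcal{K}_0^n$ produces an entire family of cone-positive signed measures; combining this with Minkowski's uniqueness theorem for surface area measures and the density of differences of support functions in $C(\mathbb{S}^{n-1})$, I expect to force $\mathrm{supp}(S_K)$ to be finite---that is, $K$ must be a polytope. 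Once $K$ is a polytope, the observation that $b(K)=1$ implies $b_2(K)=1$ (restrict $A_3=\cdots=A_n=K$) allows invoking the polytope characterization of \cite{SSZ1} to conclude that $K$ is an $n$-simplex.

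The main obstacle is the rigidity step: translating the weak positivity $\int h\,d\mu_L\ge 0$ against the cone of support functions into a genuine structural constraint on $\mathrm{supp}(S_K)$. As a fallback, one can attempt to show directly that $b(K)=1$ is incompatible with $\partial K$ possessing any regular (e.g.\ $C^2$) boundary point: at such a point, a local parameterization via the second fundamental form, plugged into the sharp inequality \eqref{plan:strongBezout} along a carefully chosen frame of segments, should produce a strict violation. This reduces the theorem to the polytope case, which is then closed by \cite{SSZ1}, bypassing the technically harder pure-measure route.
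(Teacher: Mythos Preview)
Your proposal has a genuine gap at the rigidity step, and the fallback does not close it.

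In Step~2 you obtain $\int_{\mathbb{S}^{n-1}} h_{A}\,d\mu_L\ge 0$ for all convex bodies $A$, where $\mu_L=V_n(L[n-1],K)\,S_K-V_n(K)\,S_L$. You then write that you ``expect to force $\mathrm{supp}(S_K)$ to be finite'' by varying $L$ and invoking density of differences of support functions. But positivity against the \emph{cone} of support functions is far weaker than positivity as a measure: since $\mu_L$ is balanced, you extract nothing from linear test functions, and the density of \emph{differences} $h_A-h_B$ in $C(\mathbb{S}^{n-1})$ is irrelevant because you only control $\int h_A\,d\mu_L$, not $\int (h_A-h_B)\,d\mu_L$ with a sign. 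Nothing here forces discreteness of $S_K$; indeed for many non-polytopal $K$ one expects $\mu_L$ to be a genuinely signed measure satisfying this cone positivity without contradiction. This step is not an argument, and you correctly flag it as the main obstacle---but the proposal does not overcome it.

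The fallback is also insufficient. Ruling out a $C^2$ (or positively curved) boundary point does \emph{not} reduce $K$ to a polytope: there exist non-polytopal convex bodies whose boundary is nowhere $C^2$ and has no strictly convex points. Moreover, your sketch (``a local parameterization via the second fundamental form, plugged into~\eqref{plan:strongBezout} along a carefully chosen frame of segments'') is vague; note that the Loomis--Whitney-type inequality you derived is \emph{saturated} by the simplex, so a naive frame-of-segments test will not by itself produce a strict violation near an arbitrary boundary point.

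The paper's proof is entirely different and bypasses the measure-rigidity route. Using $b(K)\ge b_2(K)$ together with Theorems~\ref{positivecurv}, \ref{infinitecond} and \ref{thm:polytope}, one reduces to the case $\Omega_0=\emptyset$, $\Omega_{n-1}$ finite, and $K$ not a polytope. The paper then \emph{constructs} an explicit violating pair: a Wulff-shape perturbation $L_k(t)=W(h_K+tf_k)$ supported on a shrinking cap $2U_k$ (with $S_K(2U_k)\to 0$), tested against a half-ball $M_k=B_2^n\cap\{x:\langle x,u_k\rangle\le 0\}$ placed in the remaining $n-1$ slots. The point is that $S_{M_k}$ puts an atom of mass $\kappa_{n-1}$ at $u_k$, while outside $2U_k$ one has $h_{L_k(t)}=h_K$; a direct comparison then yields $V(L_k(t),M_k[n-1])V(K)>V(K,M_k[n-1])V(L_k(t),K[n-1])$ for $k$ large and $t$ small. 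This is a concrete construction, not a rigidity argument, and it is what is missing from your plan.
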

%\ref{thm:b(K)} 
A corollary of \cite[Theorem 1.1]{SSZ2} is that Conjecture \ref{conj:main} holds true in dimension $3$. Concerning upper bounds, Xiao has proved:

\begin{thm}
\label{thm:xiao}
For any convex body $K \in \mathcal{K}_0^n$, $b(K)\leq n$.
\end{thm}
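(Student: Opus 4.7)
The plan is to prove $b(K)\leq n$ by reducing the multilinear condition $G_{K,n}\geq 0$ to an integral statement on the sphere and then deploying Alexandrov--Fenchel. Using the classical representation $V_n(A,B_2,\ldots,B_n) = \tfrac{1}{n}\int_{\Sph} h_A\, dS(B_2,\ldots,B_n;\cdot)$ of a mixed volume as the integral of the support function $h_A$ against the mixed surface area measure, the desired bound becomes
\[
\Bigl(\int_{\Sph} h_{A_1}\,d\sigma_K\Bigr)\Bigl(\int_{\Sph} h_K\,dS\Bigr) \;\leq\; n\Bigl(\int_{\Sph} h_{A_1}\,dS\Bigr)\Bigl(\int_{\Sph} h_K\,d\sigma_K\Bigr),
\]
where $\sigma_K = S(K,\ldots,K;\cdot)$ is the surface area measure of $K$ and $S = S(A_2,\ldots,A_n;\cdot)$. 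By multilinearity of $V_n$ and standard polytopal approximation, one can further assume that $K$ and all the $A_j$'s are polytopes, so that $\sigma_K$ and $S$ are finitely supported on the common set of facet normals of $K + \sum_{j\geq 2} A_j$ and the problem reduces to a finite bilinear estimate.

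Since the above inequality is linear in $h_{A_1}$, it is equivalent to the weighted statement that $K$ nearly maximises the ratio $\int h_{(\cdot)}\,d\sigma_K / \int h_{(\cdot)}\,dS$: for every convex body $A_1$,
\[
\frac{\int h_{A_1}\,d\sigma_K}{\int h_{A_1}\,dS} \;\leq\; n\cdot \frac{V_n(K)}{V_n(A_2,\ldots,A_n,K)}.
\]
The main tool for establishing this is Fenchel's inequality \eqref{eqn:relaxedbezout} -- which is precisely the $n=2$ case of Xiao's bound -- applied iteratively. Peeling off the copies of $K$ in $V_n(A_2,\ldots,A_n,K)$ one at a time via Fenchel (or more generally its polarised form, the generalised Fenchel inequality which swaps one pair of bodies at the cost of a factor $2$) produces, after $n-1$ such swaps, the crude estimate $b(K)\leq 2^{n-1}$.

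The principal obstacle is to sharpen this exponential $2^{n-1}$ down to the linear $n$. Because Fenchel's inequality is tight on the cross-polytope, no iteration of Fenchel in isolation can beat $2^{n-1}$; the improvement must come from exploiting the symmetry of mixed volumes in $A_2,\ldots,A_n$ and averaging over the $(n-1)!$ possible orderings of the peel-offs, so that geometric-mean growth (giving $2^{n-1}$) collapses to arithmetic-mean growth (giving $n$). Realising this symmetrisation cleanly requires identifying the correct polarisation of the $n$-linear form $G_{K,n}$ -- or, alternatively, a direct multilinear Alexandrov--Fenchel manipulation applied to a symmetric combination of the $A_j$'s that yields the factor $n$ in one shot. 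Pinpointing the combinatorial structure that produces exactly $n$ rather than some worse constant is, I expect, the heart of the argument.
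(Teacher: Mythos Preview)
Your proposal is not a proof: it stops precisely at the hard step. You correctly observe that iterating Fenchel's inequality \eqref{eqn:relaxedbezout} gives only $b(K)\le 2^{n-1}$, and you then hope that averaging over the $(n-1)!$ orderings of the peel-offs will collapse this to $n$. But you do not carry this out, and there is no reason to expect it to work. Fenchel's factor of $2$ is \emph{multiplicative}, and multiplicative losses do not cancel under symmetrisation: averaging $(n-1)!$ copies of an inequality with constant $2^{n-1}$ still gives constant $2^{n-1}$. The phrase ``geometric-mean growth collapses to arithmetic-mean growth'' is suggestive but has no concrete content here; you would need some genuinely new inequality, not a rearrangement of Fenchel, and you have not identified one.

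The paper's argument is entirely different and bypasses Fenchel altogether. One first uses Diskant's inequality (Proposition~\ref{p:diskant}) to bound the relative inradius from below:
\[
r(K,L_1)\;\ge\;\frac{1}{n}\,\frac{V_n(K)}{V_n(K[n-1],L_1)}.
\]
After rescaling and translating $L_1$ so that $L_1\subset K$ and $r(K,L_1)=1$, this reads $V_n(K)\le n\,V_n(K[n-1],L_1)$, while monotonicity of mixed volumes gives $V_n(L_1,L_2,\dots,L_n)\le V_n(K,L_2,\dots,L_n)$. Multiplying these two inequalities yields $G_{K,n}(L_1,\dots,L_n)\ge 0$ directly. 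The entire weight is carried by Diskant; the bodies $L_2,\dots,L_n$ are handled in one stroke by monotonicity, so there is no iteration and no loss to accumulate. Your integral reformulation and the attempt to control the ratio $\int h_{A_1}d\sigma_K/\int h_{A_1}dS$ uniformly in $A_1$ are heading in a harder direction than necessary.
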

He actually proved more general inequalities (see \cite[Theorem 1.1]{Xiao}), but the above upper bound is a very particular case, which can be directly deduced from Diskant's inequality (as Xiao did for instance to derive \cite[Ineq. $(3)$]{Xiao}): see Propositions \ref{p:diskant} and \ref{Xiaothm} below, where we also provide an example showing that $n$ is sharp.

%We discuss below how to directly obtain Theorem \ref{thm:xiao}, as a simple application of Diskant's inequality. We note that Xiao's upper bound is sharp, as can be seen by considering $K=[0,1]^n$, the unit cube (see Proposition~\ref{Xiaothm} below).

Before stating the main result of this paper (Theorem~\ref{thm:main} below), we need one or two notations. Denote by $S_K$ the surface area measure of $K$.
Denote by $K^u=\{y\in K: \langle y,u\rangle=h_K(u)\}$, where $u\in \Sph$ and $h_K$ is the support function of $K$ (see Section \ref{sec_notation} for definitions of $h_K$ and $S_K$).
%
%Denote by $h_K(u)=\max_{y\in K} \langle y,u\rangle$, $u\in \Sph,$ the support function of $K$, and $H_K(u)=h_K(u)u+u^{\perp}$. Denote by $K^u=K \cap H_K(u)=\{y\in K: \langle y,u\rangle=h_K(u)\}$.
 Finally, we denote $\Omega_k=\{u\in \mathbb{S}^{n-1}: K^u \text{ is $k$-dimensional}\}$. In particular, $supp(S_K) \cap \Omega_0$ is the set of regular directions of the boundary, and $\Omega_{n-1}$ is the set of outer normal vectors of facets (if any) of $K$. Some existing excluding conditions (for satisfying $b_2(K)=1$) deal with $\Omega_0,$ see Theorem \ref{positivecurv} below, or with $\Omega_{n-1}$ (see Theorem \ref{infinitecond} below).
 
 The main result of this paper is the following excluding condition.

\begin{thmx}
\label{thm:main}
Assume $K\in \mathcal{K}_0^n$ is such that $b_2(K)=1$. Then $S_K(\Omega_{n-2})=0$.
\end{thmx}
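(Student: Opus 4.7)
I argue by contrapositive: assume $S_K(\Omega_{n-2}) > 0$ and exhibit $(A, B) \in (\mathcal{K}_0^n)^2$ violating the Bezout inequality $V_n(A,B,K[n-2])V_n(K) \leq V_n(A,K[n-1])V_n(B,K[n-1])$, forcing $b_2(K) > 1$.

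\emph{Step 1 (Reduction to projection inequalities via segments).} Specializing $b_2(K)\le 1$ with $B = I_v := [0,v]$, a segment in direction $v \in \Sph$, and using the Minkowski identity
\[
V_n(I_v, M_2, \ldots, M_n) = \tfrac{\|v\|}{n}\, V_{n-1}(M_2|v^\perp, \ldots, M_n|v^\perp),
\]
the Bezout inequality reduces to the family: for every $A \in \mathcal{K}_0^n$ and every $v \in \Sph$,
\[
V_n(K)\cdot V_{n-1}(A|v^\perp, (K|v^\perp)[n-2]) \le \Vol_{n-1}(K|v^\perp)\cdot V_n(A, K[n-1]).
\]
Further taking $A = I_w$ with $w \perp v$ gives the projection inequality
\[
n\, V_n(K)\, \Vol_{n-2}(K|L^\perp) \le (n-1)\, \Vol_{n-1}(K|v^\perp)\, \Vol_{n-1}(K|w^\perp),\qquad L = \operatorname{span}(v, w). \quad(\ddagger)
\]

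\emph{Step 2 (Fiber decomposition and selection of $L$).} Fix a 2-plane $L$ with orthonormal $v, w \in L$, and decompose $K$ over $L^\perp$: for $y \in K|L^\perp$, the fiber $C_y := K \cap (L + y)$ is a 2-dimensional convex body. Fubini turns $(\ddagger)$ into
\[
n \cdot \Vol_{n-2}(K|L^\perp) \int \Vol_2(C_y)\, dy \;\le\; (n-1) \int w_v(C_y)\, dy \int w_w(C_y)\, dy, \quad(\ddagger')
\]
where $w_v(C_y), w_w(C_y)$ denote the widths of $C_y$ in directions $v, w$. To locate an effective $L$, note that for $u \in \Omega_{n-2}$ the face $K^u$ has an $(n-2)$-dimensional direction space $V(u) \subset u^\perp$, so $u \in V(u)^\perp \in \operatorname{Gr}(2, n)$. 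The pushforward of $S_K|_{\Omega_{n-2}}$ under the measurable map $u \mapsto V(u)^\perp$ is a nonzero finite Borel measure on $\operatorname{Gr}(2, n)$: an atom at some $L_0$ gives positive $S_K$-mass on $L_0 \cap \Sph$, and the atom-free case is addressed by a Besicovitch-type density argument on $\operatorname{Gr}(2, n)$.

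\emph{Step 3 (Quantitative violation of $(\ddagger')$).} With $L$ as above, a positive-measure set of fibers $C_y$ inherit a flat boundary piece in direction $v$ (or $w$) from the ridge structure of $K$, so that $\Vol_2(C_y)$ is close to the rectangle area $w_v(C_y)\, w_w(C_y)$ rather than the triangle area $w_v w_w/2$. Modelled on the exact pure-cylinder computation (where $V_n(K) = \Vol_{n-1}(K|v^\perp)\cdot h$, $\Vol_{n-1}(K|w^\perp) = h \cdot \Vol_{n-2}(K|L^\perp)$, and $(\ddagger)$ fails strictly by the factor $n/(n-1)$), this tips the balance in $(\ddagger')$ and yields the desired strict inequality in the reverse direction.

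\emph{Main obstacle.} The heart of the argument lies in Step 2 in the atom-free case. For convex bodies whose ridge direction spaces $V(u)$ vary continuously with $u$ (e.g., a truncated cone whose lateral ridges rotate around a symmetry axis), no single 2-plane $L$ contains all of the $V(u)^\perp$, and one cannot simply take $L = L_0$. One must then combine the Besicovitch differentiation on $\operatorname{Gr}(2, n)$ with continuity properties of the mixed area measures $S(\,\cdot\, , K[n-2])$ to select an $L$ that captures enough of the cylindrical contribution; the subsequent fiber-level analysis requires a careful accounting of "nearly rectangular" versus "nearly triangular" fibers to produce the strict violation of $(\ddagger')$.
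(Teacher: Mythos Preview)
Your proposal has a genuine gap, and it is precisely the one you flag as the ``main obstacle'': Steps~2--3 are sketches, not proofs, and there is real evidence that the mechanism you propose for selecting the $2$-plane $L$ does not work. Consider the cone $C=\operatorname{Conv}(e_3,B^2)\subset\R^3$, where $\Omega_{n-2}=\Omega_1$ is the lateral circle and the pushforward of $S_K|_{\Omega_1}$ under $u\mapsto V(u)^\perp$ is atom-free. For $u_0=\tfrac1{\sqrt2}(e_1+e_3)$ one has $V(u_0)=\R(e_1-e_3)$, and taking $L=V(u_0)^\perp$, $v=u_0$, $w=e_2$ gives \emph{equality} in $(\ddagger)$ (both sides equal $\pi\sqrt2$); by contrast $(\ddagger)$ \emph{is} violated for $L=e_3^\perp$, a plane which is not $V(u)^\perp$ for any $u\in\Omega_1$. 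So your Besicovitch/density argument on $\operatorname{Gr}(2,n)$ would, at best, produce an $L$ in the wrong neighbourhood. Separately, Step~3 is not justified even in the atom case: if $V(u)^\perp=L_0$ then $K^u$ is parallel to $L_0^\perp$, so the fiber $C_y$ acquires a \emph{vertex} in direction $u$, not a flat boundary arc, and the passage from ``some fibers are somewhat rectangular'' to a strict reversal of $(\ddagger')$ is never carried out. The deeper issue is that by committing to \emph{both} test bodies being segments you are asking whether $S_K(\Omega_{n-2})>0$ forces the pure projection inequality $(\ddagger)$ to fail for some $L$, and this is neither obvious nor proved.

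The paper's argument is quite different and sidesteps all of this by \emph{not} restricting $A$ to a segment. It takes $B=M_k=[0,u_k]+[0,v_k]$ (with $v_k\perp K^{u_k}$), but for $A$ it uses a Wulff-shape perturbation $L_k(t)=W(h_K+tf_k)$, where $f_k$ is a bump supported on a shrinking cap $2U_k$ around some $u_k\in\Omega_{n-2}$, chosen so that $S_K(2U_k)\to 0$ while $\Vol_{n-2}(K^{u_k})\ge\delta$. Writing $F(L_k(t),M_k)=A_k(t)V(K)-B_k(t)V(M_k,K[n-1])$, one gets $B_k(t)\le\tfrac{t}{n}S_K(2U_k)$, whereas the mixed area measure $S(M_k,K[n-2],\cdot)$ has an atom at $u_k$ of size $\tfrac1{n-1}\Vol_{n-2}(K^{u_k})\ge\tfrac{\delta}{n-1}$, so Aleksandrov's variational lemma gives $A_k(t)\gtrsim\delta t$. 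The Wulff perturbation \emph{localises} the comparison to a single direction $u_k$, which is exactly what a global projection/fiber decomposition cannot do.
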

% \Omega_{n-2}=\{u\in \mathbb{S}^{n-1}: K^u \text{ is $(n-2)$-dimensional}\}$. 
%
%This excludes for instance the above convex body $C=Conv(B, e_3)$ (and, in higher dimension
This result, together with some previous results related to Conjecture \ref{conj:main} (see theorems \ref{thm:polytope}, \ref{positivecurv}, and \ref{thm:infinite} below), yield a very short proof of the conjecture in the $3$-dimensional case.

\begin{cor}
\label{cor:intro}
In $\R^3$, the only minimizers of $b_2$, are $3$-simplices.
\end{cor}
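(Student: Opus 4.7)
I would prove this by contradiction: assume $b_2(K)=1$ and $S_K(\Omega_{n-2})>0$, and exhibit a pair $(A,B)$ of segments violating the defining inequality of $b_2(K)\leq 1$.

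Apply the definition to $A=[0,u]$, $B=[0,w]$ where $u,w\in\Sph$ are orthonormal. The standard projection formulas for mixed volumes of segments,
\[V_n([0,u],K[n-1]) = \tfrac{1}{n}\Vol_{n-1}(K|u^\perp), \quad V_n([0,u],[0,w],K[n-2]) = \tfrac{1}{n(n-1)}\Vol_{n-2}\bigl(K|\mathrm{span}(u,w)^\perp\bigr),\]
turn $b_2(K)\leq 1$ into
\[(n-1)\,\Vol_{n-1}(K|u^\perp)\,\Vol_{n-1}(K|w^\perp) \;\geq\; n\, V_n(K)\, \Vol_{n-2}\bigl(K|\mathrm{span}(u,w)^\perp\bigr). \quad(\star)\]
Pick $u_0\in\mathrm{supp}(S_K)\cap\Omega_{n-2}$, so that $F_0:=K^{u_0}$ is an $(n-2)$-dim face. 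Let $W_0\subset u_0^\perp$ denote the $(n-2)$-dim linear subspace parallel to $\mathrm{aff}(F_0)$, and $v_0$ the unit vector in $u_0^\perp\cap W_0^\perp$. Choose any $w\in W_0\cap\Sph$; then $\mathrm{span}(u_0,w)^\perp = (W_0\cap w^\perp)\oplus\R v_0$.

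To evaluate both sides of $(\star)$ I would perform a double Fubini, first along the 2-plane $P:=\R u_0\oplus\R v_0$, then along $\R w\subset W_0$. Writing $C_y := K\cap(y+P)$ for $y\in B:=K|W_0$ (a 2D convex set in $y+P$), and $D_{y'}:=\bigcup_\alpha C_{y'+\alpha w}$ for $y'\in B':=K|(W_0\cap w^\perp)$, one obtains
\[V_n(K)=\int_B\Vol_2(C_y)\,dy, \quad \Vol_{n-1}(K|u_0^\perp)=\int_B w_{v_0}(C_y)\,dy,\]
\[\Vol_{n-1}(K|w^\perp)=\int_{B'}\Vol_2(D_{y'})\,dy', \quad \Vol_{n-2}\bigl(K|\mathrm{span}(u_0,w)^\perp\bigr)=\int_{B'}w_{v_0}(D_{y'})\,dy'.\]
In the cylindrical case $K = B\times C_0$ all $C_y$ and $D_{y'}$ equal $C_0$, the integrals factorize, and $(\star)$ collapses to $n-1\geq n$ --- a clean contradiction, in fact giving $b_2(K)\geq n/(n-1) > 1$.

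The remaining challenge --- the crux of the proof --- is to extend this cylindrical calculation to a general $K$ with $S_K(\Omega_{n-2})>0$. I expect the assumption $u_0\in\mathrm{supp}(S_K)\cap\Omega_{n-2}$ to force, via disintegration of $S_K$ near $u_0$ combined with Brunn's log-concavity of $y\mapsto\Vol_2(C_y)^{1/2}$, a local cylindrical regularity of $K$ along $F_0$ sufficient to keep the ratio $\Vol_{n-1}(K|u_0^\perp)\Vol_{n-1}(K|w^\perp)/[V_n(K)\,\Vol_{n-2}(K|\mathrm{span}(u_0,w)^\perp)]$ strictly below $n/(n-1)$ for a suitable $w\in W_0$, thereby contradicting $(\star)$.
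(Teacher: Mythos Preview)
Your proposal targets the wrong statement. Corollary~\ref{cor:intro} asserts that in $\R^3$, $b_2(K)=1$ forces $K$ to be a simplex. You instead assume $b_2(K)=1$ \emph{and} $S_K(\Omega_{n-2})>0$ and try to derive a contradiction --- that is Theorem~\ref{thm:main}, not the corollary. Even granting Theorem~\ref{thm:main}, you still need to handle the cases $\Omega_0\neq\emptyset$ (Theorem~\ref{positivecurv}), $\Omega_2$ infinite (Theorem~\ref{thm:infinite}), and the residual polytope case (Theorem~\ref{thm:polytope}); otherwise a body like the cube, which has $S_K(\Omega_1)=0$, slips through your argument entirely. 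The paper's proof of the corollary is precisely this short case split.

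As an attempted proof of Theorem~\ref{thm:main}, your argument is also incomplete. You correctly reduce $b_2(K)\le1$ with segment test bodies to the projection inequality $(\star)$, and you verify that $(\star)$ fails for a cylinder $K=B\times C_0$. But a cylinder is decomposable, so $b_2>1$ is already given by Proposition~\ref{decomposable}; nothing new is established there. For the general case you only offer a hope that ``local cylindrical regularity along $F_0$'' will persist via Brunn's concavity and a disintegration of $S_K$ --- no actual estimate is provided. The difficulty is real: $(\star)$ involves \emph{global} projection volumes, while $u_0\in\Omega_{n-2}$ is local information about a single $(n-2)$-face, and there is no evident mechanism to force a violation of $(\star)$ from that alone.

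The paper's proof of Theorem~\ref{thm:main} avoids this obstacle by abandoning segment pairs. It takes $A=L_k(t)$ a Wulff-shape perturbation of $K$ supported near a carefully chosen $u_k\in\Omega_{n-2}$, and $B=M_k=[0,u_k]+[0,v_k]$ a unit square with $v_k\perp u_k$ parallel to the missing direction of $K^{u_k}$. The point is that the mixed area measure $S(M_k,K[n-2],\cdot)$ has an atom at $u_k$ of size $\frac{1}{n-1}\Vol_{n-2}(K^{u_k})\ge\frac{\delta}{n-1}$, which makes $V(L_k(t),M_k,K[n-2])-V(K,M_k,K[n-2])$ of order $t$, while $V(L_k(t),K[n-1])-V(K)$ is of order $t\,S_K(2U_k)\to0$. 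This localization is exactly what your projection approach lacks.
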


We already note here that if similar necessary conditions held for \emph{lower-dimensional} $\Omega_{n-k}$, the same proof (of corollary \ref{cor:intro}) would work, for $n\geq 4$. More precisely, if minimizers of $b_2$ must satisfy $S_K(\Omega_{n-k})=0$, for all $k=2, \dots, k_0$,
 then Conjecture \ref{conj:main} holds true in dimension $n$, for any $2\leq n \leq k_0+1$, see the proof of Corollary \ref{cor:intro} in Section~\ref{sec:dual}, and the remarks thereafter.

%By homogeneity of $\Vol_n$, one deduces that $\Vol_n(t_1 K+t_2 L)$ is an $n$-homogeneous polynomial in $(t_1,t_2)\in \R_+^2$, whose coefficients only depend on $(K,L)$. More generally, Minkowski has proven the following (we denote $A^{n,m}=\{\alpha \in \N^m: \sum_i \alpha_i=n\}$).

\noindent {\bf Acknowledgements} The author wishes to thank Artem Zvavitch for introducing her to Conjecture~\ref{conj:main} and for motivating discussions. Thanks also to Evgueni Abakoumov and Dylan Langharst for support along the project, and for many valuable discussions and suggestions.

\section{Notations and basic definitions}
\label{sec_notation}
%If $K, L\in \conbod$, then $K+L$ denotes their Minkowski sum, ie $K+L=\{a+b | a\in K, b\in L\}$.% If $x\in \R^n$, then $\{x\}+K$ is also denoted $x+K$. 
If $K\in \conbod$, $t>0$, then $tK=\{tx, x\in K\}$ is called a dilate of $K$. We say $K$ and $L$ are homothetic if $L$ is a translate of a dilate of $K$, i.e. if there exists $t>0$ and $x\in \R^n$, such that $L=x+tK$. With this definition, being homothetic is an equivalence relation on $\conbod$. Sometimes we rather use the extended notion (as in \cite{Sch1}) and also say that $L$ is a homothet of $K$ when $t=0$, i.e. when $L$ is a singleton (this breaks the symmetry of the relation). Which convention is used will be clear from context.

%The volume of a convex body $K$ is denoted $\Vol_n(K)$. When $n=1$, we rather use $|A|$ to denote the length of the segment $A$. T
The following theorem is due to Minkowski (see \cite{Mink}).
\begin{thm}
\label{mink1903}
 Fix $m\geq 2$ and let $K_1, ... , K_m$ be convex bodies living in $\R^n$. Then $\Vol_n(t_1 K_1 + ... + t_m K_m)$, seen as a function in $(t_1, ... , t_m)\in \R_+^m$, is a (homogeneous) polynomial. Moreover, all its coefficients are non-negative, and the coefficient of the monomial $\prod t_i^{a_i}$ only depends on $\{K_i : a_i \neq 0\}$.
\end{thm}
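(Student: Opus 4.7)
The plan is to prove this by induction on the dimension $n$, first establishing the polynomiality for polytopes via a facet decomposition, then extending to arbitrary convex bodies by continuity, and finally reading off the non-negativity and the dependence statement from the construction.

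For the base case $n=1$, any convex body is a segment (or point), and $\Vol_1(t_1 K_1 + \cdots + t_m K_m) = \sum_i t_i \ell(K_i)$ where $\ell(K_i)$ denotes the length of $K_i$; this is a homogeneous polynomial of degree $1$ with non-negative coefficients, and each coefficient obviously depends only on the corresponding $K_i$. For the inductive step, first reduce to the case of polytopes: volume is continuous on $\mathcal{K}^n$ with respect to the Hausdorff metric, and any convex body is a Hausdorff limit of polytopes; since a uniform limit of polynomials of bounded degree is a polynomial with limit-coefficients, and since a limit of non-negative reals is non-negative, it suffices to verify the three assertions (polynomiality, non-negativity, dependence) when each $K_i$ is a polytope $P_i$.

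For polytopes, let $P(t) := t_1 P_1 + \cdots + t_m P_m$. For each $u\in \Sph$ one has $h_{P(t)}(u) = \sum_i t_i h_{P_i}(u)$, and the face $P(t)^u$ equals $\sum_i t_i P_i^u$. Taking $u$ in the finite set $\mathcal{N}$ of outer facet normals of $P(t)$ (which can be chosen independently of $t>0$ inside each open cone of the common refinement of the normal fans of the $P_i$), apply the cone decomposition
\begin{equation*}
\Vol_n\bigl(P(t)\bigr) = \frac{1}{n} \sum_{u\in \mathcal{N}} h_{P(t)}(u) \, \Vol_{n-1}\bigl(P(t)^u\bigr).
\end{equation*}
By the induction hypothesis applied in the hyperplane $u^\perp$, each factor $\Vol_{n-1}(P(t)^u) = \Vol_{n-1}(\sum_i t_i P_i^u)$ is a homogeneous polynomial of degree $n-1$ in $(t_i)$ with non-negative coefficients. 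Multiplying by the linear non-negative form $h_{P(t)}(u) = \sum_i t_i h_{P_i}(u)$ (which is non-negative on $\R_+^m$ once one translates so that $0 \in P_i$) and summing yields that $\Vol_n(P(t))$ is a homogeneous polynomial of degree $n$ in $(t_1,\dots,t_m)$ with non-negative coefficients. One must handle the chamber-dependence of $\mathcal{N}$: different open chambers in the fan refinement could a priori give different polynomial expressions, but all these expressions agree as functions on overlapping open subsets of $\R_+^m$, hence agree as polynomials, producing a single global polynomial.

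Finally, for the dependence claim: the coefficient of $\prod_i t_i^{a_i}$ is extracted via $\frac{1}{\prod a_i!}\, \partial_{t_1}^{a_1}\cdots \partial_{t_m}^{a_m}\, \Vol_n(t_1 K_1+\cdots+t_m K_m)\big|_{t=0}$. Choosing $t_j=0$ for the indices with $a_j=0$ kills those summands entirely, so the mixed derivative only sees the subfamily $\{K_i: a_i\neq 0\}$. The main potential obstacle is the non-canonical choice of chamber $\mathcal{N}$ in the inductive step; the remedy, as indicated above, is to invoke the identity principle for polynomials agreeing on an open set, which is why the argument goes through cleanly only after one has first established polynomiality chamber-by-chamber.
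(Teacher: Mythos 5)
The paper itself does not prove this statement: it is quoted as Minkowski's classical theorem with a reference to \cite{Mink} (see also \cite[Chapter 5]{Sch1}), so the only comparison available is with the standard literature. Your argument is essentially the textbook proof (as in Schneider): induction on the dimension via the pyramid decomposition $\Vol_n(P)=\frac{1}{n}\sum_{u}h_P(u)\Vol_{n-1}(P^u)$ for polytopes, then approximation by polytopes together with the identity principle for polynomials of bounded degree, and it is correct in substance. Three small points deserve attention. First, for the induction to close you must state the theorem for arbitrary compact convex sets (possibly lower-dimensional), not only for bodies with nonempty interior: the faces $P_i^u$ appearing in the inductive step are typically vertices or low-dimensional faces, and the degenerate case where $\sum_i t_iP_i$ is not full-dimensional should be recorded as giving the zero polynomial. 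Second, your worry about chamber-dependence of $\mathcal{N}$ is vacuous in the situation you actually use: for every $t$ in the open positive orthant the normal fan of $\sum_i t_iP_i$ is the common refinement of the normal fans of the $P_i$, so $\mathcal{N}$ may be taken to be one fixed finite set of directions independent of $t$ (some of the corresponding faces may simply have zero $(n-1)$-volume); your identity-principle remedy is correct but unnecessary. Third, in the dependence claim you set $t_j=0$, whereas the polynomial identity is first established on the open orthant; you should note that $t\mapsto\Vol_n(\sum_i t_iK_i)$ is continuous up to the boundary of $\R_+^m$ (Minkowski addition, dilation with $0\cdot K=\{0\}$, and volume are continuous), so the polynomial and the volume function of the subfamily agree when some coordinates vanish. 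With that observation, extracting the coefficient of $\prod_i t_i^{a_i}$ from the restriction to $\{t_j=0:\ a_j=0\}$ does show it depends only on $\{K_i: a_i\neq 0\}$, and none of these points is a genuine gap.
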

In other words, Minkowski's theorem states that there exists a family of (non-negative) real numbers $(w_a)_a$, indexed by $a\in \{\alpha\in \N^m: \sum \alpha_j=n\}$, only depending on $(K_1, ... , K_m)$, such that, for all $t_i>0$:
$$
\Vol_n(t_1 K_1 + ... + t_m K_m)=\sum_{a_1+ ... + a_m=n} {n \choose a} w_a t^a =\sum_{a_1+ ... + a_m=n} \frac{n!}{a_1 ! ... a_m !} w_a \left( \prod_{i=1}^m t_i^{a_i} \right) .
$$
%
%\vspace{2mm}
%The result obtained by Minkowski is more general that the above statement with two convex bodies. Fix $m\geq 2$ and let $K_1, ... , K_m$ be convex bodies living in $\R^n$. Let $t_1, ... , t_m >0$ be non-negative reals. Then $\Vol_n(t_1 K_1 + ... + t_m K_m)$ is a polynomial in the $t_i$. Indeed, Minkowski's theorem states that there exists a family of real numbers $(c_a)_a$, $a\in \{b\in \N^m: \sum b_j=n\}$, only depending on $(K_1, ... , K_m)$, such that, for all $t_i>0$:
%Minkowski has also proved that (for any $K_i$), all coefficients are non-negative: $c_a \geq 0$. T
The coefficients $w_a$ in this polynomial are called mixed volumes, and if $a=(a_1, ... , a_m)$ (with $\sum a_i=n$), then $w_a$ is usually denoted $V_n(A_1{[a_1]}, ... , A_m{[a_m]})$. If $m=n$ and $a_i=1$ for every $i,$ one usually writes $V_n(A_1,\dots,A_n).$ Likewise, one usually writes $V_n(A_1[n-2],A_2[1],A_3[0], A_4[1])=:V_n(A_1[n-2],A_2,A_4).$ 

An immediate consequence of Theorem \ref{mink1903} is the following polarization formula:
\begin{equation}
    \label{interpol}
    n! V_n(A_1, ... , A_n)=\sum_{\emptyset \neq J \subseteq [n]} \Vol_n(A_J) \hspace{3mm} \text{where $A_J=A_{j_1}+...+A_{j_k}$, if $J=\{j_1, ... , j_k\}$.}
\end{equation}

Here are some basic properties of mixed volumes ($(a), (b)$ and $(e)$ follow from (\ref{interpol}) and from basic properties of the Lebesgue measure in $\R^n$, while $(c)$ and $(d)$ are easily deduced from Theorem \ref{mink1903}). 

\begin{enumerate}
\item[(a)] translation invariance: $V(K_1, ... , K_n)=V(x+K_1, K_2, ... , K_n)$, for any $x\in \R^n$ 
\item[(b)] symmetry in the arguments: for any $\sigma \in \mathcal{S}_n$ and for any $K_1, ... , K_n \in \conbod$, one has: $V(K_1, ... , K_n)=V(K_{\sigma(1)}, ... , K_{\sigma(k)})$ 
\item[(c)] (positive) homogeneity: for any $t_1, ... , t_n>0$, it holds that
$$V(t_1 K_1, ... , t_n K_n)= \left( \prod_i t_i\right) V(K_1, ... , K_n) $$
\item[(d)] multilinearity: for any $A,B, L_2, ... , L_n \in \conbod$, , it holds that:
$$V(A+B, L_2, ... , L_n)=V(A, L_2, ... , L_n) +V(B, L_2, ... , L_n).$$
\item[(e)]continuity (with respect to Hausdorff distance)
\end{enumerate}

Thanks to $(b)$ and $(d)$, the following polarization formula\footnote{note that case $k=0$ corresponds to (\ref{interpol})} holds, for any $0\leq k\leq n-1$, for any convex bodies $A_1, ... , A_k$, $B_1, ... ,B_{n-k}$:
\begin{equation}
    \label{interpolation}
    (n-k)! V_n(A_1, ..., A_k, B_1, ...,B_{n-k})=\sum_{\emptyset \neq J \subseteq [n-k]} V_n(A_1, ... , A_k, B_J[n-k]) \hspace{3mm} \text{where $B_J=\sum_{i\in J} B_i$}.
\end{equation}
Among other well-known properties satisfied by mixed volumes (we refer to  \cite[Chap. 4]{Sch1} for proofs and more details) are non-negativity and monotonicity:

\begin{enumerate}
\item $V(K_1, ... , K_n) \geq 0$ (non-negativity)
\item if $K'\subset K_1$, then $V(K', K_2, ... , K_n) \leq V(K_1, ... , K_n)$ (monotonicity)
\end{enumerate}

The Alexandrov-Fenchel inequality (see \cite{Al1} or \cite[p.393]{Sch1}) asserts that for any $(K_3, ... , K_n)$, the quadratic functional $V(\cdot,\cdot, K_3, ... ,K_n)$ is hyperbolic, in the sense that (for any $K_1, K_2 \in \mathcal{K}_0^n$):
$$V(K_1, K_1, K_3, ... , K_n) V(K_2, K_2, K_3, ... , K_n) \leq V(K_1, K_2, ... , K_n)^2. $$

Denote $\langle x,y\rangle=\sum_i x_i y_i$ the usual scalar product on $\R^n$. If $K$ is a convex body in $\R^n$, recall that the support function of $K$ is $h_K(x)=\max_{y\in K} \langle y,x \rangle$. Note that for any $t>0$, $x\in \R^n$, one has $h_K(tx)=th_K(x)$. Hence, $h_K$ is also often seen as a function on $\mathbb{S}^{n-1}:=\partial B_2^n$. Note that if $0 \in K$, then $h_K(u)\geq 0$ for any $u\in \mathbb{S}^{n-1}$. Likewise, if $K\in \conbod_0$, then $\min_{\mathbb{S}^{n-1}} h_K >0$.
% Multilinearity implies the following interpolation formula:
%$$V(K_1, ... , K_n)=\frac{1}{n!} (-1)^{n-|\epsilon|} \sum_{\epsilon} \Vol_n( \epsilon_1 K_1+ ... + \epsilon_n K_n) \text{ where } \text{ $\epsilon$ runs over $\{0,1\}^n$, and $|\epsilon|:=\sum \epsilon_i$.}$$
%

 We denote
$K^{u}=\left\{x \in K:\langle x, u\rangle=h_{K}(u)\right\}$. Note that $(K+L)^u=K^u +L^u$, for any $K, L \in \mathcal{K}_0^n$, and any $u\in \Sph$. In particular, when $P$, $Q$ are two polytopes, $u$ does not a priori need to be an outer normal vector of $P$ or $Q$, to be one of the polytope $P+Q$. When $B$ is a subset of \(\mathbb{S}^{n-1}\), define the inverse spherical image \(\tau(K, B)\) of $B$ with respect to $K$: $\tau(K, B)= \bigcup_{u\in B} K^u$.

\noindent The surface area measure \(S_{K}(\cdot)\) of \(K\) is the Borel measure on \(\mathbb{S}^{n-1}\) such that $
S_{K}(B)=\mathcal{H}^{n-1}(\tau(K, B))$
for any Borel subset \(B \subset \mathbb{S}^{n-1}\). Here \(\mathcal{H}^{n-1}(\cdot)\) stands for the \((n-1)\)-dimensional Hausdorff measure. We refer to \cite[p.214, p.279]{Sch1} for more on the surface area measure of a convex body.
%The map $(K\mapsto S_K)$ is continuous. 

\begin{fact}[Minkowski's uniqueness theorem, Theorem 8.3.1 in \cite{Sch1}]
\label{fact:surfaceunique}
The measure $S_K$ is uniquely determined by $K$ (up to translation), in the sense that $S_K=S_L$ implies $L=K+x$ for some $x\in \R^n$. 
\end{fact}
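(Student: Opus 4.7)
I will prove the contrapositive: assuming $S_K(\Omega_{n-2}) > 0$, I will exhibit orthonormal unit vectors $v_1, v_2$ such that the segments $A = [0,v_1], B = [0,v_2]$ violate the Bezout inequality with constant $1$. Expanding $\Vol_n(sA + tB + rK)$ as a polynomial in $(s,t,r)$ (or equivalently via the Cauchy projection formula), for such $A, B$ one has
\[V_n(A, K[n-1]) = \tfrac{1}{n}\, V_{n-1}(\pi_{v_1^\perp}(K)), \qquad V_n(A, B, K[n-2]) = \tfrac{1}{n(n-1)}\, V_{n-2}(\pi_{P^\perp}(K)),\]
with $P = \mathrm{span}(v_1, v_2)$. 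The Bezout inequality thus reduces to the projection inequality
\[V_{n-2}(\pi_{P^\perp}(K))\cdot V_n(K) \leq \tfrac{n-1}{n}\,V_{n-1}(\pi_{v_1^\perp}(K))\cdot V_{n-1}(\pi_{v_2^\perp}(K)),\]
which my goal is to violate for some $(v_1, v_2)$.

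Next, split $\Omega_{n-2} = \Omega_{n-2}^{(1)} \sqcup \Omega_{n-2}^{(2)}$ according to whether $\dim N(K, K^u)$ equals $1$ or $2$. The $2$-dimensional normal cones of distinct $(n-2)$-faces have pairwise disjoint relative interiors on $\Sph$ (otherwise the intersection would force $K^u$ to have dimension $\geq n-1$), so by separability there are at most countably many of them, each carrying $S_K$-mass equal to $\mathcal{H}^{n-1}(F) = 0$ for its associated $(n-2)$-face $F$. Hence $S_K(\Omega_{n-2}^{(1)}) > 0$: all the relevant $S_K$-mass comes from ``cone-like'' $(n-2)$-faces arising in a one-parameter family. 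To exploit this, I would use slicing: for a candidate edge-direction subspace $T \subset \R^n$ of dimension $n-2$, writing $K_y = (K \cap (y + T^\perp)) - y \subset T^\perp$ for $y \in \pi_T(K)$ and $w_i(y) := w(K_y, v_i)$ for the width of the $2$-dimensional slice in direction $v_i$, one has
\[V_n(K) = \int_{\pi_T K} \mathrm{vol}_2(K_y)\, dy,\ V_{n-1}(\pi_{v_i^\perp}(K)) = \int_{\pi_T K} w_{3-i}(y)\, dy,\ V_{n-2}(\pi_T(K)) = \mathcal{H}^{n-2}(\pi_T K).\]
An appropriate choice of $T$ should force the $2$-dim slices $K_y$, for $y$ in a set of positive $\mathcal{H}^{n-2}$-measure, to exhibit a common flat-edge feature with outer normal close to a distinguished $u_0 \in \Omega_{n-2}^{(1)}$, making $\mathrm{vol}_2(K_y)$ close to $w_1(y)\,w_2(y)$. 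Combining this local estimate with a Cauchy-Schwarz/covariance argument on $\pi_T(K)$ should reverse the projection inequality, contradicting $b_2(K)=1$.

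The main obstacle is selecting the correct subspace $T$ and orthonormal pair $(v_1, v_2)$. The ``natural'' choice, taking $T$ to be the direction of a single distinguished $(n-2)$-face $K^{u_0}$ (and $v_1, v_2 \in T^\perp$), does not always work. For instance, in $K = D^2 \times D^3 \subset \R^5$, where $\Omega_{n-2}$ is concentrated on the equator of the first factor: the ``natural'' choice $v_1, v_2 \in \R^2$ (first factor) gives Bezout ratio $5\pi/16 < 1$, whereas $v_1, v_2 \in \R^3$ (second factor) gives $10/(3\pi) > 1$. One must therefore pick $(v_1, v_2)$ adapted to the global ensemble of $(n-2)$-faces, not to any single one. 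I would pursue this either via a density/pigeonhole argument on the Grassmannian of $(n-2)$-face directions (to locate a ``dominant'' direction $T$ accounting for enough $S_K$-mass), or via a direct analysis of the mixed surface area measure associated to a segment $B$ and its ${n-2}$ copies of $K$, whose support on $\Sph$ encodes the relevant information about edge-directions.
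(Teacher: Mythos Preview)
Your proposal does not address the stated Fact at all. The Fact in question is Minkowski's uniqueness theorem: if $S_K=S_L$ as Borel measures on $\Sph$, then $K$ and $L$ are translates of one another. This is a classical result, cited from \cite{Sch1}, and the paper offers no proof of it; your argument about $\Omega_{n-2}$, Bezout ratios and projection volumes is entirely unrelated to that statement. What you have actually written is an attempt at the paper's Theorem~\ref{thm:main} (the main result: $S_K(\Omega_{n-2})>0 \Rightarrow b_2(K)>1$), so I will comment on it in that light.

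Viewed as an attempt at Theorem~\ref{thm:main}, your approach is genuinely different from the paper's, but it is incomplete and it is not clear it can be completed. The paper does \emph{not} use two segments as test bodies: it takes one test body to be a Wulff-shape perturbation $L_k(t)$ of $K$, localised near a carefully chosen direction $u_k\in\Omega_{n-2}$, and the other to be the $2$-dimensional square $M_k=[0,u_k]+[0,v_k]$ with $v_k$ orthogonal to the $(n-2)$-face $K^{u_k}$. The point is that the mixed surface area measure $S(M_k,K[n-2],\cdot)$ charges the single atom $u_k$ with mass $\tfrac{1}{n-1}\Vol_{n-2}(K^{u_k})\geq \tfrac{\delta}{n-1}$, while the competing term is controlled by $S_K(2U_k)\to 0$; this local asymmetry forces $F(L_k(t),M_k)>0$. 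Your segment approach instead reduces to a global projection inequality, and you have correctly identified (via the $D^2\times D^3$ example) that the canonical choice of $(v_1,v_2)$ tied to a single $(n-2)$-face need not work. But you have \emph{not} shown that a good choice always exists: the ``pigeonhole on the Grassmannian'' and ``mixed surface area'' suggestions in your last paragraph are programmes, not arguments, and there is no a priori reason why pairs of segments alone should certify $b_2(K)>1$ for every $K$ with $S_K(\Omega_{n-2})>0$. The localisation via Wulff shapes is precisely the missing idea.
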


Recall that if $\Omega$ is a closed subset of $\Sph$, and $g$ is a continuous function on $\Omega$, the Wulff-shape with respect to $(\Omega,g)$ is defined as $W(\Omega,g)=\bigcap_{u\in\Omega} \{x\in \R^n: \langle x,u\rangle \leq g(u)\}$; this defines a (possibly empty) compact convex set in $\R^n$; note that $W(\Omega,g)$ is a convex body if $g>0$ on $\Omega$. We refer to \cite[pp. 410-5]{Sch1} for more details on Wulff shapes. Here, if we assume $K$ is a convex body, that $\Omega$ is a closed subset of $\Sph$ such that $\text{supp}(S_K)\subset \Omega$, and that $f: \Omega \to \R$ is a continuous function: we denote by $(W_t)_t$ the family of Wulff-shape perturbations associated with $(\Omega, f)$, i.e. $W_t:= W(\Omega, h_K+tf)$. Note that $\Vol_n(W_t)>0$  when $|t|$ is small enough, i.e. if $I=I(K,f)$ denotes the open interval such that $W_t$ is a convex body, one has $0\in I$. In the sequel, we may abbreviate $(W_t)_t$ for $(W_t)_{t\in I}$.
% $\Vol_n(W_t)>0$   (i.e. $W_t$ is a convex body)
%
%when $K \in \mathcal{K}_0^n$, when $\Omega$ is a closed subset of $\Sph$,  with $supp(S_K)\subset \Omega$ and if $f: \Omega \to \R$ is a continuous function, consider the family of Wulff-shape perturbations of $K$ associated with $(\Omega, f)$: $(W_t):= \left(W(\Omega, h_K+tf)\right)_t$. Note that $\Vol_n(W_t)>0$ (i.e. $W_t$ is a convex body) when $|t|$ is small enough \footnote{this last remark also holds when  $K$ is a convex body not necessarily containing $0$ in its interior}.

For instance, if $K=P=\bigcap_{j=1}^N H^-(u_j, h_j)$ is a polytope whose set of outer normal vectors is $E(P)=\{u_j, j\leq N\}$, if $\chi_v$ denotes the characteristic function of $v\in \Sph$, then choosing $f_j=\chi_{u_j} \in \mathcal{C}(E(P), \R)$ yields the family of perturbated polytopes $(P_{j,s})_s$, with $P_{j,s}=\left(\bigcap_{k\leq N, k\neq j} H^-(u_k, h_k)\right) \cap H^-(u_j, h_j+s)$. Another example: assume $0\in int(K)$, then (as long as $supp(S_K)\subset \Omega$), $(1-Ct)K \subset W(\Omega, h_K+tf)\subset (1+Ct)K$ for any small enough $t>0$, where $C=\max |f| / \min h_K$.

The following theorem is known as Aleksandrov's variational lemma. We refer to \cite{Al1} for a proof, see also \cite[Lemma 7.4.3]{Sch1}. 
%We first define $S_K$, the surface area measure of $K$.

\begin{thm}
Assume $K$ is a convex body, $supp(S_K)\subset \Omega$, and $f\in \mathcal{C}(\Omega, \R)$. For $t\in\R$, denote $W_t=W(\Omega, h_K+tf)$. Then $(t\mapsto \Vol_n(W_t))$ is differentiable at $0$, and
\begin{equation}
 \label{eq_Aleksandrov_vol}
\diff{\Vol_n(W_t)}{t}\bigg|_{t=0}=\lim_{t\to 0}\frac{\Vol_n(W_t)-\Vol_n(K)}{t}=\int_{\mathbb{S}^{n-1}}f(u)dS_K(u),
  \end{equation}
\end{thm}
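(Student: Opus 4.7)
The plan is to sandwich the normalized difference quotient between two matching bounds, both derived from Minkowski's first inequality combined with the Wulff inequality $h_{W_t}(u) \leq h_K(u) + tf(u)$ on $\Omega$. First, preliminary continuity facts: because $\text{supp}(S_K) \subset \Omega$ is closed, $K = W(\Omega, h_K)$, and the standard continuity of the Wulff operation under uniform perturbations of its boundary data gives $h_{W_t} \to h_K$ uniformly on $\Sph$, hence $W_t \to K$ in Hausdorff distance and weak convergence $S_{W_t} \rightharpoonup S_K$ of surface area measures. Set $c := \frac{\int f \, dS_K}{n\Vol_n(K)^{(n-1)/n}}$.

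For the upper bound, I integrate the Wulff inequality against $S_K$ (supported in $\Omega$). Using the mixed volume formula $nV_n(W_t, K[n-1]) = \int h_{W_t}\, dS_K$,
\begin{equation*}
nV_n(W_t, K[n-1]) \leq n\Vol_n(K) + t\int f\, dS_K.
\end{equation*}
Minkowski's first inequality $V_n(W_t, K[n-1])^n \geq \Vol_n(W_t)\Vol_n(K)^{n-1}$ then yields, after taking $n$-th roots,
\begin{equation*}
\Vol_n(W_t)^{1/n} \leq \Vol_n(K)^{1/n} + tc.
\end{equation*}
Dividing by $t$ gives $\limsup_{t\to 0^+} t^{-1}(\Vol_n(W_t)^{1/n} - \Vol_n(K)^{1/n}) \leq c$ and the analogous $\liminf_{t\to 0^-} \geq c$.

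For the matching reverse bound, I would use the complementary fact that $h_{W_t}(u) = h_K(u) + tf(u)$ holds $S_{W_t}$-a.e.\ on $\text{supp}(S_{W_t}) \subset \Omega$, i.e.\ the Wulff constraint is active at the facial directions of $W_t$. Combined with the Cauchy formula $n\Vol_n(W_t) = \int h_{W_t}\, dS_{W_t}$, this yields the exact identity $n\Vol_n(W_t) = n V_n(K, W_t[n-1]) + t\int f\, dS_{W_t}$, which combined with $V_n(K, W_t[n-1]) \geq \Vol_n(K)^{1/n}\Vol_n(W_t)^{(n-1)/n}$ from Minkowski gives
\begin{equation*}
\Vol_n(W_t)^{1/n} - \Vol_n(K)^{1/n} \geq \frac{t\int f\, dS_{W_t}}{n\Vol_n(W_t)^{(n-1)/n}}.
\end{equation*}
Dividing by $t$ and using the weak convergence of $S_{W_t}$ from the first step, the right side tends to $c$, yielding $\liminf_{t\to 0^+} \geq c$ and $\limsup_{t\to 0^-} \leq c$. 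Combining both bounds shows $\Vol_n(W_t)^{1/n}$ is differentiable at $t = 0$ with derivative $c$, and the chain rule delivers the claimed formula for $\diff{\Vol_n(W_t)}{t}\big|_{t=0}$.

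The main obstacle is establishing the $S_{W_t}$-a.e.\ active-constraint equality in the reverse-bound step: the Wulff inequality is one-sided, and upgrading it to an a.e.\ equality on $\text{supp}(S_{W_t})$ requires care. For polytopes it is immediate, since each facet normal of $W_t$ corresponds to an active Wulff hyperplane; for general convex bodies it reflects the classical fact that the surface area measure of a Wulff shape concentrates on the active-constraint set. In practice I would reduce to the polytopal case via approximation: choose polytopes $P_k \to K$ in Hausdorff distance with $\text{supp}(S_{P_k}) \subset \Omega$ and $S_{P_k} \rightharpoonup S_K$, verify the formula on each $P_k$ by direct facet-area computation, and pass to the limit using the uniform estimate from the upper-bound step.
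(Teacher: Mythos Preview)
The paper does not give its own proof of this statement; it simply cites Aleksandrov and Schneider (Lemma~7.5.3 in the second edition). Your proposal is precisely the classical argument found there: sandwich the difference quotient for $\Vol_n(W_t)^{1/n}$ using Minkowski's first inequality, integrating the Wulff inequality $h_{W_t}\le h_K+tf$ against $dS_K$ for the upper bound and the active-constraint equality against $dS_{W_t}$ for the lower bound, then pass to the limit via the weak continuity $S_{W_t}\rightharpoonup S_K$. So your argument is correct and coincides with the standard proof.

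Your ``main obstacle'' is not in fact an obstacle, and the polytope-approximation paragraph is unnecessary. The $S_{W_t}$-a.e.\ equality $h_{W_t}(u)=h_K(u)+tf(u)$ follows directly: for $\mathcal H^{n-1}$-a.e.\ boundary point $x$ of $W_t$ there is a unique outer normal $u=\nu(x)$; since $\Omega$ is compact and $x\in\partial W_t$, the continuous function $v\mapsto (h_K+tf)(v)-\langle x,v\rangle\ge 0$ attains its minimum $0$ at some $v\in\Omega$, giving a supporting hyperplane at $x$ with normal $v$, and regularity forces $v=u$. Hence $u\in\Omega$ and $h_{W_t}(u)=\langle x,u\rangle=(h_K+tf)(u)$. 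This is exactly Schneider's Lemma~7.5.1, which he proves in two lines before the variational lemma; with it in hand your reverse bound is rigorous as written. (One minor point to make explicit: since $f$ is only defined on $\Omega$, extend it continuously to $\Sph$ via Tietze before invoking weak convergence; this does not change any of the integrals since all relevant measures are supported in $\Omega$.)
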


Minor modifications of the proof of the above theorem, yields a similar statement in terms of (first) mixed volumes, as follows (see \cite[Lemma 3]{S}).

\begin{lem}[Alexandrov's variational lemma, mixed volume version]
\label{lem:variationalmixed}
Assume that $K$ is a convex body, $supp(S_K)\subset \Omega$, and $f\in \mathcal{C}(\Omega, \R)$. Let $W_t=W(\Omega, h_K+tf)$, $t\in I$, be the associated Wulff-shape perturbations. Denote $V_1(t):=V(W_t,K[n-1])$. Then $(t\mapsto V_1(t))$ is differentiable at $0$, and:
\begin{equation} 
\label{eq_Aleksandrov}
\diff{V_1(t)}{t}\bigg|_{t=0}=\lim_{t\to 0}\frac{V_1(t)-V_n(K)}{t}=\frac{1}{n}\int_{\mathbb{S}^{n-1}}f(u)dS_K(u),
 \end{equation}
\end{lem}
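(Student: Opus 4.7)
The plan is to sandwich $V_1(t)$ between two expressions whose first-order expansion at $t=0$ is $\Vol_n(K)+\tfrac{t}{n}\int_{\Sph}f\,dS_K$. The three ingredients are: the classical integral representation
$$V_n(L,K[n-1])=\frac{1}{n}\int_{\Sph}h_L(u)\,dS_K(u),$$
valid for any convex body $L$; Minkowski's first inequality $V_n(L,K[n-1])^n\geq \Vol_n(L)\Vol_n(K)^{n-1}$, which is the case $k=1$ of the log-concavity chain $a_{n-k}^n\geq a_n^{n-k}a_0^k$ recalled in the introduction; and the volume version \eqref{eq_Aleksandrov_vol} of Aleksandrov's lemma, just stated.

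For the upper estimate, I would apply the integral representation with $L=W_t$ and use the defining property of the Wulff shape, namely $h_{W_t}(u)\leq h_K(u)+tf(u)$ for every $u\in\Omega$, which may be integrated against $S_K$ since $\mathrm{supp}(S_K)\subseteq\Omega$. This gives
$$nV_1(t)=\int_{\Sph}h_{W_t}\,dS_K \;\leq\; \int_{\Sph}(h_K+tf)\,dS_K = n\Vol_n(K)+t\int_{\Sph}f\,dS_K.$$
Dividing by $t$ provides an upper bound on $(V_1(t)-V_n(K))/t$ when $t>0$ and a lower bound when $t<0$, both equal to $\tfrac{1}{n}\int f\,dS_K$.

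For the matching reverse estimate, I would apply Minkowski's inequality to $L=W_t$, obtaining $V_1(t)\geq \Vol_n(W_t)^{1/n}\Vol_n(K)^{(n-1)/n}$, and insert the first-order expansion $\Vol_n(W_t)=\Vol_n(K)+t\int f\,dS_K+o(t)$ coming from \eqref{eq_Aleksandrov_vol}. Expanding the $n$-th root to first order in $t$ then yields
$$V_1(t)\geq \Vol_n(K)+\frac{t}{n}\int_{\Sph}f\,dS_K+o(t),$$
which, combined with the previous inequality, closes the sandwich from both sides of $0$, establishes differentiability of $V_1$ at $0$, and identifies the value of the derivative.

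No step is expected to be genuinely hard: the one subtlety worth watching is that the Wulff-shape inequality $h_{W_t}\leq h_K+tf$ only holds on $\Omega$, so the hypothesis $\mathrm{supp}(S_K)\subseteq\Omega$ is essential when integrating against $S_K$; and one should note that $W_t$ is a genuine convex body for $t\in I$, so that Minkowski's inequality applies. Modulo these points, the argument is a direct combination of the tools just recalled in the preceding pages.
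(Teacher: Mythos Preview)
Your argument is correct. The upper bound via the Wulff-shape inequality $h_{W_t}\leq h_K+tf$ on $\Omega$ (together with $\mathrm{supp}(S_K)\subseteq\Omega$) and the lower bound via Minkowski's first inequality combined with the volume version \eqref{eq_Aleksandrov_vol} do indeed sandwich $(V_1(t)-V_n(K))/t$ around $\tfrac{1}{n}\int f\,dS_K$ from both sides of $0$, and the subtleties you flag are the right ones.

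The paper does not spell out its own proof here; it simply says the lemma follows by ``minor modifications of the proof'' of the volume version and refers to \cite[Lemma~3]{S}. That phrasing suggests re-running the sandwich argument that proves \eqref{eq_Aleksandrov_vol} directly at the level of $V_1(t)$, whereas you instead take \eqref{eq_Aleksandrov_vol} as a black box and combine it with Minkowski's inequality. Your route is slightly different in spirit but arguably cleaner: it isolates exactly which new ingredient (Minkowski's inequality) bridges the gap from volume to first mixed volume, rather than repeating the technical steps involving $S_{W_t}$. Either way the content is the same.
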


Given $K, \Omega$ and $f$ as in Lemma \ref{lem:variationalmixed}, and $(W_t)_{t\in I}$ the associated family of Wulff-shape perturbations, one may easily check that for any $u\in \Sph$, the map $(t\mapsto h_{W_t}(u))$ is concave on $I$. In particular, this map is both left and right-differentiable at $t=0$. In fact, Lemma \ref{lem:variationalmixed} allows to draw a more precise conclusion here.

%Given $K, \Omega$ and $f$ as above, note that there exists an open real interval $I$ such that $0\in I$ and $W_t$ is a convex body for $t\in I$. We denote by $(W_t)_t :=(W_t)_{t\in I}$ the associated family of Wulff-shape perturbations. One can easily check that for any $u\in \Sph$, the map $(t\mapsto h_{W_t}(u))$ is concave on $I$. In particular, this map is both left and right-differentiable at $t=0$. In fact, Lemma \ref{lem:variationalmixed} allows to draw a more precise conclusion here.
%\ref{lem:variationalmixed} 

\begin{lem}
\label{lem:pointwiseCV}
Let $(W_t)_t$ be Wulff-shape perturbations of a given convex body $K$, with respect to $(\Omega, f)$. Then for $S_K$-almost every $u\in \Sph$:
\begin{equation} \label{pointwiseCV}
\diff{h_{W_t}(u)}{t}\bigg|_{t=0}=\lim_{t\to 0}\frac{h_{W_t}(u)-h_K(u)}{t}=f(u).
\end{equation}
\end{lem}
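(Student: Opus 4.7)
The plan is to combine the integral representation
\[
V_1(t) \;=\; \tfrac{1}{n}\int_{\Sph} h_{W_t}(u)\,dS_K(u)
\]
with a one-sided pointwise inequality for $h_{W_t}$ coming from the definition of the Wulff shape, and then close the gap via Lemma~\ref{lem:variationalmixed}; the whole argument is essentially a Fatou sandwich.

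First I would record the key pointwise bound. By the very definition of $W_t=W(\Omega,h_K+tf)$, every $x\in W_t$ satisfies $\langle x,u\rangle \leq h_K(u)+tf(u)$ for each $u\in\Omega$, so taking the supremum over $x\in W_t$ gives $h_{W_t}(u)\leq h_K(u)+tf(u)$ for all $u\in\Omega$. Since $W_0=K$ (using $\text{supp}(S_K)\subset \Omega$), dividing by $t$ yields
\begin{equation*}
\tfrac{h_{W_t}(u) - h_K(u)}{t} \leq f(u)\ \ \text{if } t>0, \qquad \tfrac{h_{W_t}(u) - h_K(u)}{t} \geq f(u)\ \ \text{if } t<0.
\end{equation*}
As already noted just before the lemma, concavity of $t\mapsto h_{W_t}(u)$ guarantees that the one-sided derivatives $D^{\pm}_u$ at $t=0$ exist in $\R\cup\{\pm\infty\}$ for every $u\in\Omega$, with $D^+_u \leq f(u) \leq D^-_u$ from the above.

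Next I would integrate against $S_K$. By the classical identity recalled above,
\begin{equation*}
\frac{V_1(t) - V_n(K)}{t} \;=\; \frac{1}{n}\int_{\Sph} \tfrac{h_{W_t}(u) - h_K(u)}{t}\,dS_K(u),
\end{equation*}
and Lemma~\ref{lem:variationalmixed} asserts the left-hand side converges (two-sidedly) to $\frac{1}{n}\int f\,dS_K$. Since $f$ is continuous on the compact set $\Omega\supset\text{supp}(S_K)$ and $S_K$ is finite, $f\in L^1(S_K)$, and the integrands are uniformly dominated (above, for $t>0$; below, for $t<0$) by $f$. Reverse Fatou for $t\to 0^+$ then gives
\begin{equation*}
\int f\,dS_K \;=\; \lim_{t\to 0^+} \int \tfrac{h_{W_t} - h_K}{t}\,dS_K \;\leq\; \int D^+_u\,dS_K \;\leq\; \int f\,dS_K,
\end{equation*}
forcing $D^+_u = f(u)$ for $S_K$-almost every $u$; ordinary Fatou for $t\to 0^-$ yields analogously $D^-_u = f(u)$ on a set of full $S_K$-measure. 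Hence the two-sided derivative exists and equals $f(u)$ for $S_K$-a.e. $u\in\Sph$.

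I do not foresee any real obstacle, the entire argument being a squeeze between a pointwise inequality and a known integral identity. The only piece of bookkeeping worth watching is that the pointwise comparison $h_{W_t}-h_K\leq tf$ flips direction upon dividing by $t<0$, so one must apply reverse Fatou on the right and ordinary Fatou on the left; verifying $f\in L^1(S_K)$ is immediate from the compactness of $\Omega$ and finiteness of $S_K$.
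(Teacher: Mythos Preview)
Your proposal is correct and follows essentially the route the paper indicates: the paper does not spell out a proof, merely calling the lemma ``a simple consequence of Lemma~\ref{lem:variationalmixed}'' and pointing to \cite[Theorem~3.5]{SSZ2}, and your Fatou-sandwich argument (pointwise bound $h_{W_t}\le h_K+tf$ on $\Omega$, concavity giving one-sided derivatives, then squeezing via the integral identity from Lemma~\ref{lem:variationalmixed}) is precisely how that consequence is realized. One cosmetic remark: rather than asserting $W_0=K$, what you actually use is $h_{W_0}=h_K$ on $\Omega\supset\text{supp}(S_K)$, which is immediate from $K\subset W_0$ together with the defining inequality of $W_0$; this is enough for both the integral identity and the difference quotients.
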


This is a simple consequence of Lemma \ref{lem:variationalmixed}. This lemma already appears in \cite{SSZ2}, see [Theorem 3.5] therein and its proof.

When $P$ is a polytope, denote $E(P)$ the set ot its outer normal vectors, then $S_P$ is a discrete measure on $\mathbb{S}^{n-1}$, given by
$\sum_{u\in E(P)} \Vol_{n-1}(P^u) \delta_u$, where $\delta_v$ denotes the Dirac measure at $v$. Then the first mixed volume between a convex body $L$ and the polytope $P$ can be expressed as:
\begin{equation}
\label{eq_polytope}
V(L,P[n-1])=\frac{1}{n} \sum_{u \in E(P)} h_L(u) V_{n-1}(P^u).
\end{equation}

%$$V_n(L,K[n-1])=\frac{1}{n} \sum_{u\in E(K)} h_L(u)  \Vol_{n-1}(F(K,u)).
%$$
This integral representation also holds for $K\in \conbod_0$: by setting $f=h_L$ in \eqref{eq_Aleksandrov}, one obtains that\footnote{note that $S_K$ is the only Borel measure on $\Sph$ such that (\ref{eqn:integralformula}) holds (for any convex body $L$), i.e. (\ref{eqn:integralformula}) is equivalent to the definition via inverse spherical images given on top of p.$5$. (this is easily deduced from Lemma \ref{lem:variationalmixed}).}
\begin{equation}
\label{eqn:integralformula}
V(L,K[n-1])=\frac{1}{n} \int_{\mathbb{S}^{n-1}} h_L(u) dS_K(u).
\end{equation}

More generally, under mild assumptions, if $K_2, ... , K_n \in \conbod$ 
, then there exists a unique measure $S(K_2, ... , K_n, .)$ on the sphere, such that for all convex bodies $L$, the mixed volume can be represented:
$$
V(L,K_2, ... , K_n)=\frac{1}{n} \int_{\mathbb{S}^{n-1}} h_L(u) dS(K_2, ... , K_n,u).
$$

When $P_2, ... , P_n$ are polytopes, denote $E(P_2, ... , P_n)=\{u\in \mathbb{S}^{n-1} :V_{n-1}(P_2^u, ... , P_n^u)>0 \}$. Then the mixed surface area measure $S(P_2, ... , P_n,.)$ is a discrete measure on $\mathbb{S}^{n-1}$, given by:
$$
V(L,P_2, ... , P_n)=\frac{1}{n} \sum_{u\in E} h_L(u) V_{n-1}(P_2^u, ... , P_n^u) \text{ where $E=E(P_2, ... , P_n)$}.
$$
(this can be deduced from \eqref{eq_polytope} by polarization \footnote{to be more precise: use  (\ref{interpolation}) in dimension $n$ with $k=1$, use $(P+Q)^u=P^u+Q^u$, and then (\ref{interpol}) in dimension $(n-1)$.}).

%Usual notations are $H_K(u)=\{x\in \R^n \ \langle x,u\rangle=h_K(u) \}$ for the affine hyperplane tangential to $K$ and orthogonal to $u$ and $K^u=K \cap H_K(u) \subset \partial K$ for the set of points $y\in K$ such that $\langle y,u \rangle$ is maximal; $K^u$ is a compact, convex subset of $H_K(u)$, hence it has dimension $0, 1, \dots, $ or $(n-1)$ . In the latter case, $u$ is called an outer normal vector for $K$, and $K^u$ is a \emph{facet} of $K$. Hence, if $K$ is say a polytope, then $K^u$ is the face of $K$ in direction $u$. In this instance, one has $\Vol_{n-1}(K^u)>0$ if and only if $u$ is an outer normal vector, i.e. if $K^u$ is a facet of $K$. If $y\in \partial K$, then $\sigma(y)=\{u\in \mathbb{S}^{n-1}: y\in K^u\}$ is a convex, compact, non-empty subset of $\mathbb{S}^{n-1}$. 

Let $\Omega=supp(S_K)$ denote the support of the surface area measure of $K$. Several times in the next sections, we partition $\Omega$ according to the dimension of the faces $K^u$, i.e. write $\Omega=\Omega_0 \cup ... \cup \Omega_{n-1}$, where $\Omega_k=\{u\in \Omega: K^u \text{ is $k$-dimensional }\}$.
For instance, if $K=P$ is a (full-dimensional) polytope in $\R^n$, then $\Omega=\Omega_{n-1}$ is the set of outer normal vectors of $K$, while if $K=B_2^n$, then $\Omega=\Omega_0=\mathbb{S}^{n-1}$.

\vspace{2mm}

Mixed volume behaves quite nicely with segments, as expressed in the next formulas.
\begin{enumerate}
     \item[a-] For $u\in \mathbb{S}^{n-1}$, and for any $K\in\conbod$, one has:
    %\item[a-] For $u\in \mathbb{S}^{n-1}$, and let $L=[0,u]$ be the segment in $\R^n$ joining the origin to $u$. Then for any $K\in\conbod$, one has:
$V_n(K+t[0,u])=V_n(K)+t V_{n-1}(\pi_{u^{\perp}}(K))$,  where $\pi_{u^{\perp}}$ is the orthogonal projection onto the linear hyperplane $u^{\perp}$.
It readily follows that:
\begin{equation}
\label{eqn:proj1}
V(K[n-1],[0,u])=\frac{1}{n} \Vol_{n-1}\left( \pi_{u^{\perp}}(K)\right) .
\end{equation}
\noindent By polarization, it also holds that for any $K_1, ... , K_{n-1} \in \conbod$, one has:
$$V(K_1, ... , K_{n-1},[0,u])=\frac{1}{n} V_{n-1}\left( \pi_{u^{\perp}}(K_1), ... , \pi_{u^{\perp}}(K_{n-1})\right) .
$$
\item[b-] Fix two directions $u$ and $v$ on $\mathbb{S}^{n-1}$, and set $L_1=[0,u]$, $L_2=[0,v]$. Then, using the above formula twice yields:
\begin{equation}
\label{eqn:proj2}
    V(L_1, L_2, K{[n-2]})=\frac{1}{n(n-1)} V_{n-2}(\pi_{U^{\perp}}K)|det_2(u,v)|=\frac{2}{n(n-1)} V_{n-2}(\pi_{U^{\perp}}K) V_2(L_1, L_2) \, \,
\end{equation}
 where $U^{\perp}$ denotes the $(n-2)$-dimensional space orthogonal to $u$ and $v$ (if $u$, $v$, are colinear, then the three terms vanish).
 \item[c-]  Fix $k$ linearly independent directions $u_j\in \Sph$, $j\in \{1,...,k\}$, then a similar formula holds:
 $$V_n([0,u_1], ... , [0,u_k], K_{k+1}, ... , K_n)=\frac{k! V_k([0,u_1], ... , [0,u_k])}{n(n-1) ... (n-k+1)} V_{n-k}\left( \pi_{U^{\perp}} K_{k+1}, ... , \pi_{U^{\perp}} K_n\right),$$
 where $U^{\perp}$ denotes the $(n-k)$-dimensional space orthogonal to $u_1, ... , u_k$ (if $u_1, ... , u_k$ are not linearly independent, then both sides are zero).
\end{enumerate}

Another well-known property of mixed volumes is the following: let $K_1,... , K_n \in \conbod$, and let $T\in \text{Aff}^*(\R^n)$. Then $V(TK_1, ... , TK_n)=|det(T)| (K_1, ... , K_n).$ It follows that $b_2(K)$ and $b(K)$ (see Section \ref{sec:intro} for the definition of these two) are affine-invariant.
%\begin{equation}
 %   \label{mixed_det}
  %  V(TK_1, ... , TK_n)=|det(T)| (K_1, ... , K_n).
%\end{equation}

Finally, we will need some notations on the sphere: if $u,v\in \mathbb{S}^{n-1}$, then $dist(u,v)$ is the distance between $u$ and $v$ on the sphere: for instance dist$(u, -u)=\pi$. If $A\subset \mathbb{S}^{n-1}$ and $\epsilon>0$, we write $A^{\epsilon}:=\{u\in \mathbb{S}^{n-1}: dist(u,A)\leq \epsilon\}$. If $u\in \Sph$, and $\epsilon>0$, then $U(u,\epsilon)$ is called a cap (on $\Sph$), centered at $u$ and of radius $\epsilon$, and is defined as $U(u,\epsilon)=\{w\in \Sph: dist(u,w)\leq \epsilon\}$. Let $\epsilon, t>0$, and assume $t\epsilon <\pi$: then if $u_0\in \Sph$ and $U=U(u_0, \epsilon)$, we denote $tU:=U(u_0,t\epsilon)$.
%The convex hull of a two sets is given by $\text{Conv}(A,B)=\{(1-\lambda)x+\lambda y: \lambda\in[0,1],x\in A, y\in B\}.$ The convex hull of a collection of sets is defined iteratively. 

\section{Zeroes of polynomials and mixed volumes}
\label{sec:zeros}
In this section, we outline the proofs of Equation~\eqref{eq_Bezout_simplex}. First, we must recall facts from algebraic geometry. Let $P,Q \in \C[X,Y]$ be two polynomials, of respective degree $d_1$, $d_2$. We denote $Z_P \subset (\C\setminus \{0\})^2$ the set of zeroes of $P$: $Z_P=\{(x,y)\in (\C\setminus \{0\})^2: P((x,y))=0\}$.When $P$ and $Q$ are coprime, then $Z_P \cap Z_Q$ is finite and Bezout's theorem \cite[Proposition 8.4]{F} gives an upper bound on the cardinality of the intersection, by the product of the degrees:
$$|Z_P \cap Z_Q| \leq d_1 d_2.
$$
\noindent More generally, if $Q_1, ... , Q_n \in \C[x_1, ... , x_n]$ are $n$ polynomials defined in $\C^n$ with no non-trivial common factor, then $Z_{Q_1} \cap ... \cap Z_{Q_n}$ is a finite set in $(\C \setminus \{0\})^n$, and Bezout's inequality upper bounds the cardinality of the intersection, by the product of the degrees of the polynomials. 
%which coincides in this case with the degree of the intersection.
$$(B_n) \hspace{30mm} deg(Z_{Q_1} \cap ... \cap Z_{Q_n}):= |Z_{Q_1} \cap ... \cap Z_{Q_n}| \leq d_1 d_2 ... d_n .$$
Apart from its set of zeroes, another geometric object associated with a polynomial $Q\in \C[x_1, ... , x_n]$, is its Newton polytope. This polytope is defined as the convex hull of integer points $(k_1, ... , k_n)$, such that the monomial $x_1^{k_1} ... x_n^{k_n}$, has a non-zero coefficient in $P$. The Bernstein-Khovanskii-Kushnirenko Theorem (see \cite{Be,Kush,Kho}, or \cite{EKh}) allows one to express for most $n$-tuples of polynomials $(Q_1, ... ,Q_n)$, the intersection number $|Z_{Q_1} \cap ... \cap Z_{Q_n}|$ (which is also called the \emph{degree} of the variety $Z_{Q_1} \cap ... \cap Z_{Q_n}$, and coincides with the cardinality of the intersection, when the latter is finite), in terms of the mixed volume of the associated Newton polytopes:
$$deg(Z_{Q_1} \cap ... \cap Z_{Q_n}) = n! V(P_1, ... , P_n) \hspace{3mm} \text{where $P_i$ is the Newton polytope of $Q_i$}.$$
Though this equality doesn't hold for every $n$-tuple of polynomials $(Q_1, ... , Q_n)$, the equality holds almost surely, if $(Q_1, ... , Q_n)$ are random $n$-variate polynomials, for certain natural distributions on complex polynomials. For instance, we may first fix an arbitrary $(Q_1, ... , Q_n)$, then let $(Q'_1, ... , Q'_n)$ be an $n$-tuple of polynomials such that $Q'_i$ has same monomials $X^{\alpha}$ as $Q_i$, but with $Q'_i=\sum c'_{\alpha} X^{\alpha}$, where $c'_{\alpha}$ is a random complex number, centered at $c_{\alpha}$ (the coefficient of $X^{\alpha}$ in $Q_i$), and distributed in a small disk around $c_{\alpha}$ which does not contain $0$. While $n! V(P_1, ... , P_n)$ is fixed (and only depends on $(Q_1, ... , Q_n)$, or rather on its set of monomials), the intersection $Z_{Q'_1} \cap ... \cap Z_{Q'_n}$ is random, but almost surely the $Q'_i$ are coprime so that this is a finite intersection. The set of common zeroes $Z_{Q'_1} \cap ... \cap Z_{Q'_n} \subset (\C \setminus 0)^n$ is therefore almost surely a (random) finite set. The size of this finite set is always upper bounded by $n! V(P_1, ... ,P_n)$; the Bernstein-Khovanskii-Kushnirenko's Theorem says that this upper bound is almost surely the actual size of the set. We refer to \cite{Kho} for more details around this theorem.

\noindent When $Z_{Q_1} \cap ... \cap Z_{Q_k}$ is infinite, its  degree $deg(Z_{Q_1} \cap ... \cap Z_{Q_k})$ is nonetheless a well-defined integer, and a more general form of Bezout theorem is the following inequality:
$$deg(Z_{Q_1} \cap ... \cap Z_{Q_k}) \leq d_1 ... d_k \hspace{2mm} .$$
In this case, the theorem by Bernstein-Khovanskii-Kushnirenko still holds\footnote{(if the $Q_i$ have \emph{generic} coefficients)} and allows one to express this degree via the mixed volume of the associated polytopes: $$deg(Z_{Q_1} \cap ... \cap Z_{Q_k}) = n! V(P_1, ... , P_k, \Delta_n[n-k]), \hspace{3mm} \text{ where } \Delta_n=Conv(0,e_1, ... , e_n) .$$

As a particular case: when $k=1$, one finds $deg(Z_{Q_1})=n! V(P_1, \Delta_n[n-1])$. Moreover (almost surely) $deg(Z_{Q_1})=d_1$, the degree of $Q_1$. We now recall how this proves Equation \eqref{eq_Bezout_simplex}, see also \cite[p.2]{SZ}, before seeing how similar arguments yield $b(\Delta)=1$.

Indeed, set $k=2$ and let $P_1$ and $P_2$ be two Newton polytopes. Let $Q_1$ and $Q_2$ be two random polynomials defined over $\C^n$, defined so that $P_i$ is the Newton polytope of $Q_i$ (for instance with independent, small noise around each non-zero coefficient of some arbitrary $Q_i^{(0)}$ whose set of monomials is the set of vertices of $P_i$). Then, almost surely, the Bezout inequality and the equality in the Bernstein-Khovanskii-Kushnirenko theorem hold, yielding the following inequality in terms of mixed volumes:
$$deg(Z_1\cap Z_2) \leq d_1 d_2 \hspace{2mm} \Rightarrow \hspace{2mm} V_n(P_1, P_2, \Delta_n[n-2]) V_n(\Delta_n) \leq V_n(P_1, \Delta_n[n-1]) V_n(P_2, \Delta_n[n-1]).$$
(where we used that $\Delta_n=Conv(0,e_1, ... , e_n)$ has volume $(n!)^{-1}$).

Hence this inequality holds for any pair $(P_1, P_2)$ of polytopes with vertices in $\N^n$. By homogeneity of mixed volume, it also holds for $(\delta P_1, \delta P_2)$, meaning that it holds for pairs of polytopes with vertices in $(\delta \N)^n$. By translation invariance of mixed volume, the inequality still holds for pair of polytopes with vertices in $(\delta \Z)^n$. By continuity of mixed volume, since any pair of convex bodies $(L_1, L_2)$ can be approximated by a pair $(P_{1,\delta}, P_{2,\delta})$ of polytopes with vertices in $(\delta \Z)^n$ (with $d_{\mathcal{H}}(P_i,L_i)\leq \delta$), one concludes the inequality not only holds for pairs of Newton polytopes $(P_1,P_2)$, but actually for any pair of convex bodies $(L_1, L_2)$.

%Thanks to homogeneity, translation invariance, and continuity of mixed volume, the latter inequality, valid for any pair of Newton polytopes $(P_1,P_2)$, also holds for any pair of polytopes $(P_1,P_2)$. Indeed, given a polytope $P \subset \R^n,$ there exists $\delta>0$ so that $P$ can be approximated by a polytope with vertices in $(\delta \Z)^n$, which can be translated so that its vertices all lie in $(\delta \N)^n$. Equation~\eqref{eq_Bezout_simplex} then follows from the continuity of mixed volume, since everywhere $K\in\conbod$ can be approximated by polytopes \cite{Sch1}.

Similarly, one can deduce Bezout's inequality for mixed volumes of $n$ convex bodies, from the Bezout inequality $(B_n)$. Fix $Q_1, ... , Q_n$ some polynomials\footnote{or rather fix $P_1, ... , P_n$ first, set $A_i$ to be the set of vertices of $P_i$, then let $Q_i=\sum_{\alpha \in A_i} (2+u_{\alpha}) X^{\alpha}$ be a random  polynomial, where $u_{\alpha} \in \mathbb{D}$ are i.i.d uniform points from the unit disk, and do so independently for each $i=1,2, \dots ,n$} in $\C^n$. Let $X=Z_{Q_1}$ and $Y=Z_{Q_2} \cap ... \cap Z_{Q_n}$. Then the Bernstein-Khovanskii-Kushnirenko theorem says that, for generic choices of $Q_i$, one has:
$$
deg(X)=n! V(P_1, \Delta_n[n-1]), \hspace{3mm} deg(Y)=n! V(P_2, ... , P_n, \Delta_n), \hspace{3mm} deg(X\cap Y)=n! V(P_1, ... , P_n).
$$
where $P_i$ is the Newton polytope of polynomial $Q_i.$ Hence, the Bezout inequality $deg(X \cap Y) \leq deg(X) deg(Y)$ translates to
$$
V(P_1, ... , P_n) V(\Delta_n) \leq V(P_1, \Delta_n[n-1]) V(P_2, ... , P_n , \Delta_n).
$$
As before, using homogeneity and translation invariance of $V(.)$, and (simultaneous) approximation of a tuple of convex bodies by polytopes with vertices on a grid, the same inequality remains valid if replacing Newton polytopes $P_i$, with arbitrary convex bodies $K_i.$  In other words:%
\begin{equation}\label{eq_Bezout_n_simplex_temp}
\text{$\forall K_1, \dots , K_n \in \conbod,$ } \hspace{4mm} V(K_1, ... , K_n) V(\Delta_n) \leq V(K_1, \Delta_n[n-1]) V(K_2, ... , K_n , \Delta_n).\end{equation}
The fact that $V(TL_1, ... , TL_n)=|det(T)|V(L_1, ... , L_n)$ for any affine transform $T$ (and any $L_i \in \conbod$) implies that for any simplex $\Delta$, and for any $n$-tuple of convex bodies $(K_i)_{i\leq n}$ we have:
\begin{equation}\label{eq_Bezout_n_simplex}V(K_1, ... , K_n) V(\Delta) \leq V(K_1, \Delta[n-1]) V(K_2, ... , K_n , \Delta).\end{equation}

In order to study \eqref{eq_Bezout_n_simplex} for bodies besides the simplex, we have introduced the notation $b(K)$ (see definition $p.3$), which can be equivalently defined as the following supremum:
%
%In order to study \eqref{eq_Bezout_n_simplex} for bodies besides the simplex, fix $K\in\conbod$ and define\footnote{(this matches the definition given page $3$)} $b(K)$ (the Bezout constant of $K$ for $n$ bodies) as:
\begin{equation}b(K)=\sup_{K_1, ... , K_n} \frac{V(K_1, ... , K_n) V(K)}{V(K_1, K[n-1]) V(K_2, ... , K_n , K)},
\label{eq_bezout_constant}
\end{equation} where the supremum is over $n$-tuples of bodies with non-empty interior. In other words, $b(K)$ is the least constant $b>0$ such that, for any $K_1, \dots , K_n \in \conbod$, it holds that:
\begin{equation}\label{eq_Bezout_n}V(K_1, ... , K_n) V(K) \leq b(K) V(K_1, K[n-1]) V(K_2, ... , K_n , K).\end{equation}

Notice that $b(K)$ is affine invariant. Equation~\eqref{eq_Bezout_n_simplex} says that $b(\Delta)\leq 1$ for any $n$-simplex $\Delta$. Note that $b(K)\geq b_2(K) \geq 1$ for any $K\in \conbod_0$,  by choosing $K_3=...=K_n=K$. Therefore, we have that $b(\Delta)=b_2(\Delta)=1$ for any simplex $\Delta.$ A shorter, purely geometric proof of this fact is possible (see \cite{SSZ2}, page 9).

Let us now discuss upper bounds on $\max_K b(K)$. Monotonicity of mixed volume, the fact that $b(\Delta)\leq 1$ (for all $n$-simplices) and the fact (due to Lassak) that any convex body $K$ contains an $n$-simplex $\Delta$ with $\Delta \subset K \subset (n+2) \Delta$, immediately yields $\max_K b(K) \leq (n+2)^n$:
\begin{align*} V(L_1, ... , L_n) V(K)\leq (n+2)^n V(L_1, ... , L_n) V(\Delta) &\leq (n+2)^n V(\Delta, L_2, ... , L_n) V(L_1,\Delta[n-1])
\\
&\leq (n+2)^n V(K, L_2, ... , L_n) V(L_1,K[n-1]).
\end{align*}
(see also \cite[Prop $5.1$]{SZ}, where Lassak's result is used to upper bound some related quantities).

A somewhat better upper bound can be obtained thanks to John's theorem. Recall the inradius $r(K,L)$ of $K$ relative to $L$, is defined as:
$$r(K,L)=\max \{r>0: \exists x\in \R^n, x+rL \subset K \}.$$
Note that in the definition of $b(K)$ as a maximal ratio, the maximum can be taken over $n$-tuples $(L_1, ... , L_n)$ such that $L_1 \subset K$ and $L_1$ contains a segment of length at least $c=r(K, B_2^n)$. (this easily follows from translation invariance and homogeneity of mixed volume).

%We saw that, thanks to multilinearity and translation invariance of mixed volume, the ratio $\frac{V(L_1, ... , L_n) V(K)}{V(L_1, K[n-1]) V(L_2, ... , L_n , K)}$ is unchanged when replacing $L_i$ by $x_i+\lambda_i L_i$. Similarly, $b(K)$ is equivalently defined as the maximum of this ratio over all $(L_i)_{i=1}^n\subset\conbod$ such that all $L_i$ are contained in $K$, and all of diameter at least $c>0$, where $c>0$ only depends on $K$ (for instance we may take $c=\frac{1}{2}r(K, B_2^n)$). 

Define $c_0:=\frac{c}{n \Vol_n(K)} \min_u \Vol_{n-1}\left( \pi_{u^{\perp}}(K)\right)$, so that $c_0>0$ (if $K\in \mathcal{K}_0^n$). Note that $c_0$ only depends on $K$. Then, since $L_1$ contains some segment of length $c$, monotonicity of $V(.)$ implies:
$$V(K[n-1],L_1) \geq c \min_{u\in \mathbb{S}^{n-1}} V(K[n-1],[0,u])=\frac{c}{n} \min_u \Vol_{n-1}\left(\pi_{u^{\perp}}(K) \right) =c_0  \Vol_n(K).$$
%where $c_0$ only depends on $K$. 
And since $L_1\subset K,$ this latter lower bound yields:
$$V(L_1, ... , L_n)V(K)\leq V(K, L_2, ... , L_n)V(K) \leq c_0^{-1} V(K, L_2, ... , L_n) V(K[n-1],L_1), \hspace{2mm} \text{ and thus $b(K) \leq c_0^{-1}$.}$$
Since $b(K)$ is affine-invariant, we can assume $K$ is in its John's position, i.e. $B_2^n \subseteq K \subseteq \sqrt{n} B_2^n$.
In this case, $\min_u V\left(\pi_{u^{\perp}}(K)\right) \geq {\kappa_{n-1}}$ while $V(K)\leq n^{n/2} \kappa_n$, so that the above argument yields $$b(K) \leq c_0^{-1}= \frac{n \Vol_n(K)}{r(K,B_2^n) \min_u V(\pi_{u^{\perp}}(K))}\leq \frac{\kappa_n}{\kappa_{n-1}} n^{\frac{n}{2}+1} < \sqrt{2\pi} n^{\frac{n+1}{2}}$$ (where the last inequality is due to log-convexity of the $\Gamma$ function).

The best upper bound is in fact $\max_K b(K) \leq n$. It is not hard to see that $n$ is sharp (see Proposition \ref{Xiaothm} below), and the upper bound can be derived via Diskant inequality, which implies a lower bound on the inradius, as first noticed by Xiao  (see  \cite[Corollay 1.2, Inequalities $(2)$, $(3)$]{Xiao}).

% by Xiao (see  \cite[Corollay 1.2]{Xiao}). We notice here that $n$ is sharp. This upper bound is a consequence of Diskant's inequality. It only requires a simple form of Diskant inequality, which we state below (see \cite{D}, or in \cite[Theorem 7.2.3]{Sch1}).

\vspace{3mm}
\begin{prop}[Diskant's inequality]
\label{p:diskant}
Let $K, L$ be two convex bodies. Then:
$$V(K[n-1],L)^{n/n-1} - V(K) V(L)^{1/n-1} \geq \left[ V(K[n-1],L)^{1/n-1} - r(K,L) V(L)^{1/n-1} \right]^n.$$
\end{prop}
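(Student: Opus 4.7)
The plan is to establish Diskant's inequality—a quantitative sharpening of Minkowski's first inequality $V_n(K[n-1],L)^n\geq V_n(K)^{n-1}V_n(L)$—by integrating a Brunn--Minkowski-type pointwise estimate along the family of inner parallel bodies of $K$ with respect to $L$.

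By translation invariance I would first assume $rL\subseteq K$ where $r:=r(K,L)$, and introduce for $t\in[0,r]$ the Minkowski inner parallel body $K_t:=K\ominus tL=\{x\in\R^n:x+tL\subseteq K\}$. Then $K_t+tL\subseteq K$, $K_0=K$, and $V_n(K_r)=0$ by maximality of $r$. The key pointwise estimate comes from applying the Brunn--Minkowski inequality for mixed volumes (a consequence of Aleksandrov--Fenchel)
\[
V_n((A+B)[n-1],L)^{1/(n-1)}\geq V_n(A[n-1],L)^{1/(n-1)}+V_n(B[n-1],L)^{1/(n-1)}
\]
to $A=K_t$, $B=tL$, combined with monotonicity via $K_t+tL\subseteq K$, yielding
\[
V_n(K_t[n-1],L)\leq\bigl(V_n(K[n-1],L)^{1/(n-1)}-tV_n(L)^{1/(n-1)}\bigr)^{n-1},\qquad t\in[0,r].
\]

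The final step integrates this pointwise bound against the Steiner-like identity $V_n(K)=n\int_0^rV_n(K_t[n-1],L)\,dt$. The change of variables $u=V_n(K[n-1],L)^{1/(n-1)}-tV_n(L)^{1/(n-1)}$ computes the resulting integral to $V_n(L)^{-1/(n-1)}\bigl[V_n(K[n-1],L)^{n/(n-1)}-(V_n(K[n-1],L)^{1/(n-1)}-rV_n(L)^{1/(n-1)})^n\bigr]$, and Diskant's inequality follows after multiplying by $V_n(L)^{1/(n-1)}$.

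The main obstacle is justifying the Steiner-like identity for general $K$. When $K$ is a tangential body of $L$ with parameter $r$, i.e.\ $K=M+rL$ with $M=K\ominus rL$ (so that $K_t=M+(r-t)L$ varies affinely in $t$), the identity holds as a direct consequence of differentiating the polynomial $s\mapsto V_n(M+sL)$ and using $V_n(M)=0$. For general $K$, however, only the one-sided variational estimate $-\frac{d}{dt}V_n(K_t)\geq nV_n(K_t[n-1],L)$ holds (obtained from the inclusion $K_t\supseteq K_{t+h}+hL$ and a Minkowski expansion), which integrates to the reverse inequality $V_n(K)\geq n\int_0^rV_n(K_t[n-1],L)\,dt$---the wrong direction for our argument. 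The classical resolution (see \cite{Sch1}) is an approximation procedure, replacing $K$ by suitable tangential bodies while controlling both $V_n(K[n-1],L)$ and $r(K,L)$ and passing to the limit using continuity of mixed volumes in the Hausdorff metric; this approximation is the delicate technical heart of Diskant's original proof.
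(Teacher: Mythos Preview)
The paper does not actually prove Proposition~\ref{p:diskant}: it merely quotes Diskant's inequality as a known result (with a citation to Diskant's original paper \cite{D}) and then uses it as a black box in the proof of Proposition~\ref{Xiaothm}. There is therefore no ``paper's own proof'' to compare your attempt against.

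That said, your outline is the classical route and is essentially correct as a sketch. You have correctly identified the real difficulty: the naive differential inequality $-\tfrac{d}{dt}V_n(K_t)\geq nV_n(K_t[n-1],L)$ integrates to $V_n(K)\geq n\int_0^r V_n(K_t[n-1],L)\,dt$, which is the wrong direction, and one must pass through an approximation by tangential bodies (form bodies) to obtain equality in the Steiner-type identity. Since you explicitly defer that step to \cite{Sch1} rather than carry it out, what you have written is closer to a roadmap than a self-contained proof---but as the paper itself treats Diskant's inequality as a citation, this is entirely in keeping with how the result is used here.
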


\begin{prop}
\label{Xiaothm}
For all $K\in \conbod_0$, $b(K) \leq n$. Moreover, this upper bound is sharp.
\end{prop}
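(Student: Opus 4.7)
The plan has three ingredients: a reduction of the multi-body statement $b(K)\leq n$ to a Bonnesen-type inequality for a single body $L$; the extraction of that inequality from Proposition~\ref{p:diskant} via Bernoulli's inequality; and a verification of sharpness on the unit cube.

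\emph{Reduction.} Fix $L_1,\ldots,L_n\in\mathcal{K}^n_0$. Since the ratio defining $b(K)$ is invariant under translating and dilating $L_1$ (both numerator and denominator are homogeneous of the same degree in $L_1$ and translation-invariant), after replacing $L_1$ by $r(K,L_1)L_1$ and translating, one may assume $L_1\subseteq K$ with $r(K,L_1)=1$. Monotonicity of mixed volume then gives $V_n(L_1,L_2,\ldots,L_n)\leq V_n(K,L_2,\ldots,L_n)$, and the common factor $V_n(K,L_2,\ldots,L_n)$ cancels the denominator in $b(K)$. The claim $b(K)\leq n$ therefore reduces to proving the Bonnesen-type inequality
\[
r(K,L)\, V_n(L,K[n-1])\;\geq\;\frac{V_n(K)}{n}\qquad\text{for every }L\in\mathcal{K}^n_0.
\]

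\emph{Bonnesen bound.} I would divide the conclusion of Proposition~\ref{p:diskant} through by $V_n(L,K[n-1])^{n/(n-1)}$ and write the result as $1-s\geq(1-t)^n$, where
\[
s=\frac{V_n(K)\,V_n(L)^{1/(n-1)}}{V_n(L,K[n-1])^{n/(n-1)}},\qquad t=r(K,L)\left(\frac{V_n(L)}{V_n(L,K[n-1])}\right)^{1/(n-1)}.
\]
The special case $V_n(L,K[n-1])^n\geq V_n(K)^{n-1}V_n(L)$ of the Alexandrov--Fenchel inequality gives $s\in[0,1]$; the containment $r(K,L)L\subseteq K$ (after translation) combined with that same inequality gives $t\in[0,1]$. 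Bernoulli's inequality $(1-t)^n\geq 1-nt$ then yields $s\leq nt$, which unwinds to $V_n(K)\leq n\,r(K,L)\,V_n(L,K[n-1])$, i.e.\ the Bonnesen bound above.

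\emph{Sharpness.} For the cube $K=[0,1]^n$ take $L_1=[0,e_1]$ and $L_2=\cdots=L_n=C_\varepsilon$, where $C_\varepsilon=[0,\varepsilon]\times[0,1]^{n-1}$ is a thin slab with non-empty interior. Using the Minkowski decomposition $K=[0,e_1]+C_0$ (with $C_0=\{0\}\times[0,1]^{n-1}$) and the segment formula $V_n([0,u],M_2,\ldots,M_n)=\tfrac{1}{n}V_{n-1}(\pi_{u^\perp}M_2,\ldots,\pi_{u^\perp}M_n)$, a short multilinear expansion yields $V_n(K)=1$, $V_n(L_1,K[n-1])=\tfrac{1}{n}$, $V_n(L_1,C_\varepsilon[n-1])=\tfrac{1}{n}$ and $V_n(K,C_\varepsilon[n-1])=\tfrac{1+(n-1)\varepsilon}{n}$; the ratio thus equals $\tfrac{n}{1+(n-1)\varepsilon}$ and tends to $n$ as $\varepsilon\to 0$. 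The only delicate step in the whole argument is the normalization exposing $(1-t)^n$ in Diskant; once in that form, Bernoulli automatically produces the factor $n$, and both the reduction and the sharpness calculation are routine.
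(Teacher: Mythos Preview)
Your proof is correct and essentially identical to the paper's: both reduce to the inradius bound $r(K,L)\,V(L,K[n-1])\geq V(K)/n$ via Diskant's inequality together with one elementary inequality (you state it as Bernoulli's $(1-t)^n\geq 1-nt$, the paper as the equivalent $1-(1-u)^{1/n}\geq u/n$). For sharpness the paper takes all $L_i=[0,e_i]$ directly rather than your limiting slab $C_\varepsilon$, but this is the same example and yields the same ratio in the limit.
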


\begin{proof}
Denote $r=r(K,L)$, $v_1=V_1(K,L)=V(K[n-1],L)$, $v_0=\Vol_n(K)$, and $v_L=\Vol_n(L)$. Finally, denote $u= \frac{v_0 v_L^{1/n-1}}{v_1^{n/n-1}}\geq 0$. Note that $u\leq 1,$ by Brunn-Minkowski's first inequality. Hence $1-(1-u)^{1/n} \geq \frac{u}{n}$.
 %$$ 1-(1-u)^{1/n}=\frac{1}{n} u + \frac{1}{n}\sum_{k=2}^{\infty} \frac{1}{k!} \prod_{j=1}^{k-1} \left(j- \frac{1}{n}\right) u^k \geq \frac{1}{n} u.%
 %$$
\noindent Combining this %simple remark 
with Diskant's inequality (as stated in Proposition~\ref{p:diskant}) gives:
$$r\geq \left( \frac{v_1}{v_L} \right)^{\frac{1}{n-1}} \left( 1- (1-u)^{1/n} \right) \geq \frac{1}{n} u  \left( \frac{v_1}{v_L} \right)^{\frac{1}{n-1}} = \frac{1}{n} \frac{v_0}{v_1}=\frac{1}{n} \frac{V(K)}{V(K[n-1], L)}.
$$
By multilinearity of mixed volume, we may as well replace $L_1$ with $x+r(K,L_1) L_1$ (in the definition of $b(K)$ as a maximum), and hence assume that $L\subset K$ and that $r=1$. In this case, the above inequality reads $V(K) \leq n V(K[n-1],L)$, while monotonicity of $V(.)$ gives that $V(L, L_2, ... , L_n )\leq V(K, L_2, ... , L_n )$. Since this holds true for arbitrary $L_2, ... , L_n$, it follows that $b(K)\leq n$.

\vspace{2mm}
To see that $\max_K b(K) \leq n$ is a sharp upper bound, take $K=[0,1]^n=[0,e_1]+ ... +[0,e_n]$, where $(e_i)$ denotes the canonical basis in $\R^n$. Take $L_1=[0,e_1]$, $L_2=[0,e_2]$, etc. Then
$$V(K,L_2, ... , L_n)=V(L_1+L_2+...+L_n , L_2, L_3, ... , L_n)=\sum_{j=1}^n V(L_j, L_2, ... , L_n)= V(L_1, L_2, ... , L_n)$$
since $V(L_j, L_2, ... , L_n)=0$ for any $j\geq 2$.
Meanwhile, $nV(K[n-1],L_1)=\Vol_{n-1}\left( \pi_{e_1^{\perp}}(K) \right)=1=\Vol_n(K)$. Therefore,
$$V(L_1, L_2, ... , L_n) V(K)=nV(K[n-1],L_1)V(K,L_2, ... , L_n), \hspace{5mm} \text{showing that $b\left( [0,1]^n \right) =n$}.$$
\end{proof}

\noindent Let $C=[0,1]^n$ denote the unit cube in $\R^n$. The above proof naturally brings the following question:

\begin{question}
Let $K \in \mathcal{K}_0^n$ be such that $b(K)=n$. Do we have $K=TC$ for some affine transform $C$ ?
\end{question}
\noindent In other words, is the cube the unique convex body (seen in $\mathcal{A}\mathcal{K}_0$) maximizing $b(K)$ ?

Let us briefly discuss lower bounds on $b_2(K).$ Note that the argument at the end of the above proof also yields $b_2([0,1]^n)\geq \frac{n}{n-1}$. To see it, let $L_i=[0,e_i]$, for $i=1,2, ... , n$, so that $C=[0,1]^n=L_1+...+L_n$. Then, for $i=1,2$: $V(C[n-1],L_i)=\frac{1}{n}$, while
$V(C[n-2], L_1, L_2)=\frac{1}{n(n-1)}$ and $V(C)=1$. Therefore $b_2(C)\geq \frac{n}{n-1}$. Indeed, $b_2([0,1]^n)=\frac{n}{n-1}$.

\begin{claim}
\label{claim:cube}
Let $C=[0,1]^n$ be the unit cube in $\R^n$. Then $b_2(C)=\frac{n}{n-1}$.
\label{cube_constant}
\end{claim}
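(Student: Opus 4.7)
The lower bound $b_2(C)\geq \frac{n}{n-1}$ is already verified in the paragraph immediately preceding the claim (by taking $A=[0,e_1]$, $B=[0,e_2]$). So the plan is to establish the matching upper bound: for every $A,B \in \mathcal{K}_0^n$,
\[
V_n(A,B,C[n-2])\,V_n(C) \;\leq\; \tfrac{n}{n-1}\,V_n(A,C[n-1])\,V_n(B,C[n-1]),
\]
which gives $b_2(C)\leq \frac{n}{n-1}$ since $V_n(C)=1$.

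First I would reduce both mixed volumes to explicit expressions exploiting $C$'s product structure. Since $C$ is a polytope with surface area measure $S_C=\sum_{i=1}^n(\delta_{e_i}+\delta_{-e_i})$, equation \eqref{eq_polytope} gives immediately
\[
V_n(A,C[n-1]) \;=\; \frac{1}{n}\sum_{j=1}^n w_j(A), \qquad w_j(A):= h_A(e_j)+h_A(-e_j).
\]
For the second factor I would write $C=L_1+\cdots+L_n$ with $L_i=[0,e_i]$ and expand $V_n(A,B,C[n-2])$ by multilinearity. The terms $V_n(A,B,L_{i_1},\ldots,L_{i_{n-2}})$ vanish whenever the directions $e_{i_1},\ldots,e_{i_{n-2}}$ are linearly dependent, so only those indexed by $(n-2)$-subsets of $[n]$ survive; applying the segment formula (c) from Section~\ref{sec_notation} to each and doing a short multinomial count yields
\[
V_n(A,B,C[n-2]) \;=\; \frac{2}{n(n-1)} \sum_{\{j,k\}\subset [n]} V_2(A_{jk},B_{jk}),
\]
where $A_{jk}$ (resp.\ $B_{jk}$) denotes the orthogonal projection of $A$ (resp.\ $B$) onto $\operatorname{span}(e_j,e_k)$, and the sum runs over unordered pairs.

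Plugging these identities into the target inequality reduces the claim to the purely planar estimate
\[
2\sum_{\{j,k\}\subset [n]} V_2(A_{jk},B_{jk}) \;\leq\; \sum_{(j,k)\in [n]^2} w_j(A)\,w_k(B).
\]
The key observation is that any planar convex body $P$ fits, after translation, inside the axis-aligned rectangle $R(P):=[0,w_1(P)]\times[0,w_2(P)]$. Combining translation invariance and monotonicity of $V_2$ in each argument with the direct computation $V_2([0,a]\times[0,b],\,[0,a']\times[0,b'])=\tfrac{1}{2}(ab'+a'b)$ for axis-aligned rectangles, one obtains $2V_2(A_{jk},B_{jk}) \leq w_j(A)\,w_k(B) + w_k(A)\,w_j(B)$. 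Summing over unordered pairs $\{j,k\}$ produces exactly $\sum_{j\neq k} w_j(A)\,w_k(B)$, which is trivially dominated by the full ordered sum. I do not anticipate a genuine obstacle: the main care needed is the bookkeeping of multinomial coefficients and the vanishing conditions in the expansion step, and the sharpness case of the bounding-box inequality (aligned rectangles or orthogonal segments) is consistent with the equality already attained in the lower bound at $A=[0,e_1]$, $B=[0,e_2]$.
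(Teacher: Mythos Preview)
Your proposal is correct and follows essentially the same route as the paper: both compute $V_n(A,C[n-1])=\frac{1}{n}\sum_j w_j(A)$ (your $w_j(A)$ is the paper's $|\pi_j(A)|$), expand $V_n(A,B,C[n-2])$ by multilinearity into a sum of planar mixed volumes over coordinate $2$-planes, and then bound each $V_2(A_{jk},B_{jk})$ via the axis-aligned bounding rectangle, which is exactly the content of the paper's Lemma~\ref{lem:rectangle}. The bookkeeping you outline matches the paper's, so there is nothing to add.
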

\begin{proof}
We have just argued that $b_2(C)\geq \frac{n}{n-1}$, by choosing for $(L_1, L_2)$ two (non-parallel) sides of the cube. It remains to check that for any $A,B \in \conbod$: 
$$V(A,B,C[n-2])V(C) \leq \frac{n}{n-1} V(A, C[n-1])V(B,C[n-1]).
$$
Using properties $(b)$ and $(d)$ of mixed volume and that $C=[0,e_1]+...+[0,e_n]$ one gets:
$$
V(A,C[n-1])=(n-1)! \sum_{i=1}^n V(A,\left( [0,e_k]\right)_{\substack{1\leq k \leq n \\ k\neq i}} )=\frac{1}{n} \sum_i |\pi_i(A)|
$$ where $\pi_i$ is the orthogonal projection
onto $\R e_i$, and where $|A'|$ is the length of (segment) $A'$. Similarly,
$$
V(A,B,C[n-2])=(n-2)! \sum_{i<j} V(A,B,,\left( [0,e_k]\right)_{\substack{1\leq k \leq n \\ k\neq i, j}})=\frac{2}{n(n-1)} \sum_{i<j} V_2( \pi_{i,j}(A), \pi_{i,j}(B))
$$
where $\pi_{i,j}$ denotes the orthogonal projection onto $\R e_i +\R e_j$.

For all $i<j$, note that $V_2( \pi_{i,j}(A), \pi_{i,j}(B)) \leq  \frac{1}{2} \left( |\pi_i(A)| |\pi_j(B)|+|\pi_j(A)| |\pi_i(B)| \right)$, according to Lemma~\ref{lem:rectangle} (see Appendix). Therefore:

\begin{align*}V(C)V(A,B,C[n-2])&=\frac{2}{n(n-1)} \sum_{i<j} V_2( \pi_{i,j}(A), \pi_{i,j}(B)) 
\\
&\leq \frac{1}{n(n-1)} \sum_{i<j} \left( |\pi_i(A)| \pi_j(B)|+|\pi_j(A)| |\pi_i(B)|\right)
\\
&=\frac{1}{n(n-1)} \sum_{i \neq j} |\pi_i(A)| |\pi_j(B)|\leq \frac{1}{n(n-1)} \left( \sum_i |\pi_i(A)| \right)  \left( \sum_j |\pi_j(B)| \right)
\\
&=\frac{n}{n-1} V(A,C[n-1])V(B,C[n-1] ).
\end{align*}
\end{proof}

{\bf{Remark:}} It follows from this proof that equality can only hold if $A$, $B$ are totally orthogonal, or, more precisely in our case: if there exists $I=\{i_1, ... , i_k\}\subset [n]$, and $x_0, x_1 \in \R^n$, such that $A\subset E+x_0$ and $B\subset E^{\perp}+x_1$, where $E=\R e_{i_1} +...+\R e_{i_k}$.

However, $\frac{n}{n-1} \neq \max_K b_2(K)$, that is to say: the cube is not a maximizer for $b_2(.)$, since actually $\max_K b_2(K)=2$, as follows from Lemma~\ref{fenchel} (known as Fenchel's inequality) and the example afterwards. In fact, whether or not $b_2(C)$ is maximal among zonoids is related to a restricted version of a conjecture by Dembo, Cover, Thomas (see remark \ref{appendix:cube} in appendix, and see \cite{FMMZ} and references therein).

In \cite[Proposition 2.1]{FGM}, it was shown that, for any $A,B,C, K \in \mathcal{K}_0^n$, one has:
\begin{equation}
\label{eqn:FGM}
 \frac{V(B+C,B+C,K[n-2])}{V(B+C,A,K[n-2])}\geq \frac{V(B,B,K[n-2])}{V(B,A,K[n-2])}+\frac{V(C,C,K[n-2])}{V(C,A,K[n-2])}.
\end{equation}
The special case $K=A$ follows from Brunn-Minkowski's inequalities. The case with $K$ arbitrary can be derived using two Alexandrov-Fenchel inequalities, we refer to \cite[Lemma and Proposition 2.1]{FGM} for a proof (and for an even more general statement).

A consequence of \eqref{eqn:FGM} is the following, see also \cite{Fen}.

\begin{lem}
\label{fenchel}
Let $K\in \mathcal{K}_0^n$ be a convex body. If $M, L \in \mathcal{K}^n$ are compact convex sets, denote $V(M,L)=V_n(M,L,K[n-2])$.
Then for any $(M,L)\in \left(\mathcal{K}^n\right)^2$:
$$V(K,K)V(M,L)\leq 2 V(K,L)V(K,M).$$
\end{lem}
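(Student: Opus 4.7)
The plan is to derive Lemma~\ref{fenchel} directly from inequality~\eqref{eqn:FGM} by one well-chosen substitution, followed by elementary algebra and the discarding of a single non-negative term.

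I would first reduce to the case $L,M\in\mathcal{K}_0^n$. By translation-invariance of mixed volumes I may assume $L$ and $M$ both contain the origin, and by continuity of mixed volumes in the Hausdorff metric I may replace $L$ and $M$ by $L+\varepsilon B_2^n$ and $M+\varepsilon B_2^n$, sending $\varepsilon\to 0^+$ at the end. This regularization ensures $V(K,L)$, $V(M,L)$, and $V(K+M,L)$ are all strictly positive, which is required to meaningfully divide in what follows, and it also disposes of the degenerate cases where $L$ or $M$ is lower-dimensional or a singleton (for which the claim is in any event trivial, as the left-hand side vanishes).

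Next, I apply \eqref{eqn:FGM} with the substitution $A=L$, $B=K$, $C=M$. Using the shortcut $V(\cdot,\cdot)=V_n(\cdot,\cdot,K[n-2])$, this reads
\[
\frac{V(K+M,K+M)}{V(K+M,L)}\;\geq\;\frac{V(K,K)}{V(K,L)}+\frac{V(M,M)}{V(M,L)}.
\]
Now expand the two terms involving $K+M$ by bilinearity: $V(K+M,K+M)=V(K,K)+2V(K,M)+V(M,M)$ and $V(K+M,L)=V(K,L)+V(M,L)$. Abbreviating $a=V(K,L)$, $b=V(M,L)$, $p=V(K,K)$, $q=V(M,M)$, $r=V(K,M)$, clearing denominators turns the inequality into $(p+2r+q)\,ab\geq (a+b)(pb+qa)$; expanding the right side and cancelling the common $(p+q)ab$ yields
\[
2\,V(K,M)\,V(K,L)\,V(M,L)\;\geq\;V(K,K)\,V(M,L)^{2}+V(M,M)\,V(K,L)^{2}.
\]

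To finish, I drop the non-negative term $V(M,M)\,V(K,L)^{2}$ from the right and divide through by $V(M,L)>0$, obtaining $V(K,K)\,V(M,L)\leq 2\,V(K,L)\,V(K,M)$, as desired. I do not expect any serious obstacle: the entire content is in recognizing that setting $A=L$ (rather than $A=K$) in~\eqref{eqn:FGM} places $V(K,K)$ on the wrong side of the inequality, which after cross-multiplication and dropping a single non-negative term produces exactly the asymmetric Fenchel bound. It is worth noting that if one instead keeps both terms on the right and invokes AM-GM one recovers only the Alexandrov-Fenchel inequality $V(M,L)^{2}\geq V(M,M)V(L,L)$ in the form $V_n(\cdot,\cdot,K[n-2])$; the asymmetric retention here is what produces the factor $2$ and the strictly stronger statement.
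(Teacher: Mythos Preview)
Your proof is correct and follows essentially the same route as the paper: both substitute $B=K$ in \eqref{eqn:FGM}, expand bilinearly, cross-multiply to obtain $2V(K,M)V(K,L)V(M,L)\geq V(K,K)V(M,L)^2+V(M,M)V(K,L)^2$ (equivalently \eqref{eqn:fenchelsharp} after dividing), and then drop one non-negative term. The only cosmetic difference is that you set $A=L$, $C=M$ whereas the paper sets $A=M$, $C=L$, which merely swaps the roles of $L$ and $M$ in the discarded term; your added regularization to ensure positive denominators is a reasonable precaution the paper leaves implicit.
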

%The corollary easily follows from Lemma \ref{lem:fenchel2}, by taking $B=C_3=...=C_n=K$, $A=M$, $C=L$.
%
Actually, in case $V(M,L)=V_n(M,L,K[n-2])>0$, one actually gets the sharper inequality:
\begin{equation}
\label{eqn:fenchelsharp}
V(K,K)V(M,L)\leq 2V(K,L)V(K,M)- \frac{V(L,L)V(K,M)^2}{V(M,L)}.
\end{equation}

(to derive \eqref{eqn:fenchelsharp}, take $K=B$ in \eqref{eqn:FGM}, and $A=M$, $C=L$).

Note that Lemma \ref{fenchel} exactly tells us that for any convex body $K\in \conbod$, $b_2(K)\leq 2$.

We recall that the inequality is sharp: take $K=O_n$ the $l_1$-ball in $\R^n$. Denote $(e_i)$ the canonical basis in $\R^n$. Let $L_1$, and $L_2$ be the two segments: $L_1=[0,e_1+e_2]$, $L_2=[0,e_1-e_2]$. Then
$$V_n(L_2,K[n-1])=V_n(L_1,K[n-1])=\frac{\sqrt{2}}{n} \Vol_{n-1}(\pi K)$$
where $\pi$ denotes the orthogonal projection onto $(e_1+e_2)^{\perp}$, so that $\Vol_{n-1}(\pi K)=\frac{1}{\sqrt{2}} \Vol_{n-1}(O_{n-1})$.
On the other side (according to \eqref{eqn:proj2}), 
$$V_n(L_1,L_2,K[n-2])=\frac{2V_2(L_1,L_2)}{n(n-1)} \Vol_{n-2}(O_{n-2})=\frac{2}{n(n-1)}\Vol_{n-2}(O_{n-2})$$
since the projection of $O_n$ onto $(e_1, e_2)^{\perp}$, is $O_{n-2}$, and since $L_1$ and $L_2$ both have length $\sqrt{2}$, and are orthogonal. It follows that
$$V_n(L_1,K[n-1])V_n(L_2,K[n-1])=\frac{\Vol_{n-1}(O_{n-1})^2}{n^2}=\frac{2^{2n-2}}{(n!)^2}=\frac{1}{2}\frac{2^{2n-1}}{(n!)^2}=\frac{1}{2} V_n(L_1,L_2,K[n-2])V_n(K).$$
And hence $b_2(O_n)=2$. Assume $K\in \conbod_0$: then because of \eqref{eqn:fenchelsharp}, we may have equality in Lemma \ref{fenchel} only if $M,L$ are both segments. In other words, $b_2(K)=2$ if and only if there exists $u,v\in \Sph$, with 
$$\Vol_n(K) \Vol_{n-2}(\pi_{(u,v)^{\perp}} K) V_2([0,u],[0,v])=\frac{(n-1)}{n} \Vol_{n-1}(\pi_{u^{\perp}} K) \Vol_{n-1}(\pi_{v^{\perp}} K) 
$$
where we remind $\pi_{(u,v)^{\perp}}$ denotes the (orthogonal) projection onto $(u,v)^{\perp}$.

Although Conjecture \ref{conj:main} (characterization of $\Delta$ as the only minimizer of $b_2$ in $\conbod_0$) remains open, the characterization has been proven if one restricts to the class of $n$-polytopes. 

\begin{thm}[Simplex Characterization, \cite{SSZ1}]
\label{thm:polytope}
Let $P\subset \R^n$ be an $n$-polytope. Then $b_2(P)>1$, unless $P$ is an $n$-simplex.
\end{thm}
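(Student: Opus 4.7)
My plan is to argue by contradiction: suppose $P$ is an $n$-polytope with $b_2(P)=1$ but $P$ has $N \geq n+2$ facets $F_1,\dots,F_N$, with outer unit normals $u_i$ and areas $f_i := V_{n-1}(F_i)$. I would then produce a perturbation of $P$, or a pair $(A,B)$, violating the equality form of the Bezout inequality.

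The entry point is the symmetric bilinear form
\[
F_{P,1}(A,B) := V(A,P[n-1])\,V(B,P[n-1]) - V(A,B,P[n-2])\,V_n(P),
\]
which is nonnegative by hypothesis and satisfies $F_{P,1}(P,B)=0$ for every $B$, since $V(P,P[n-1]) = V_n(P)$ and $V(P,B,P[n-2]) = V(B,P[n-1])$. Thus $P$ is a global minimum of $A \mapsto F_{P,1}(A, B)$ for every fixed $B$. I would probe this using Wulff-shape perturbations $A_t := W(\mathrm{supp}(S_P),\, h_P + tf)$ with $f \in \mathcal{C}(E(P),\R)$. Applying Lemma~\ref{lem:variationalmixed} and \eqref{eq_polytope}, the derivative of $V(A_t, P[n-1])$ at $t=0$ is $\tfrac{1}{n}\sum_i f(u_i)\,f_i$, while the analogous derivative of $V(A_t,B,P[n-2])$ at $t=0$ is $\tfrac{1}{n}\int_{\Sph} g(u)\,dS(B,P[n-2],u)$, where $g = \tfrac{d}{dt}|_0 h_{A_t}$ is a piecewise linear extension of $f$ off $E(P)$ prescribed by the vertex-facet incidences of $P$. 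Imposing $\tfrac{d}{dt}|_{t=0} F_{P,1}(A_t,B) = 0$ for all $f$ and $B$ should produce a family of rigidity identities, schematically of the form
\[
V(B,P[n-1]) \, f_i \;=\; V_n(P) \, c_i(B), \qquad 1 \leq i \leq N,
\]
where each $c_i(B)$ is an explicit integral against $S(B,P[n-2],\cdot)$ of a piecewise linear function encoding the combinatorial incidences of the $i$-th facet.

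Next I would specialize $B$ to convert these identities into concrete geometric constraints. Natural choices are $B = [0,v]$ for a segment --- where the projection formulas of Section~\ref{sec_notation} evaluate both sides in terms of $\pi_{v^\perp}(P)$ and of $F_i$ --- and $B$ itself a further Wulff-shape perturbation of $P$, yielding a second-order constraint (a quadratic form in $f$ that must be positive semidefinite). Combined with Minkowski's relation $\sum_i f_i u_i = 0$, these constraints should pin down $(u_i, f_i)_{i=1}^N$ to coincide, up to affine equivalence, with the facet data of an $n$-simplex --- contradicting $N \geq n+2$.

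The hard part will be this rigidity analysis: $c_i(B)$ depends on the combinatorial type of $P$, so the argument is cleanest when $P$ is \emph{simple} (every vertex meeting exactly $n$ facets), in which case the underlying piecewise linear functions admit explicit formulas on each cone of the normal fan of $P$. I would first dispatch the simple case by direct computation, then reduce the non-simple case by perturbing to a nearby simple polytope and invoking continuity of mixed volumes. Ultimately, showing that the simplex is the \emph{unique} solution of the full system of rigidity identities amounts morally to an Alexandrov-Fenchel equality-case argument for polytopes, and I expect it to constitute the technical heart of the proof.
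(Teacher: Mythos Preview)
Your opening moves are in the right direction and parallel the paper's: taking $A = P_{i,t}$ and $B = P_{j,s}$ in $F_{P,1}(A,B) \geq 0$, and using Fact~\ref{cor:supportpolytope} (the mixed measures $\sigma_r := S(P_{i,t}[r], P[n-1-r],\cdot)$ all have support $E(P)$), the sign condition in $s$ forces $\sigma_1 = \lambda_t\, \sigma_0$, where $\lambda_t = V(P_{i,t},P[n-1])/V_n(P)$ and $\sigma_0 = S_P$. This is essentially your family of ``rigidity identities'' when $B$ ranges over the facet perturbations.

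Where your plan goes astray is in what you flag as the ``hard part''. You propose to solve a combinatorial system depending on the face lattice of $P$, treat simple polytopes first, and then reduce the general case by continuity. The continuity reduction fails: if $b_2(P)=1$ and $P_k \to P$ are simple approximants, you only get $b_2(P_k) \to 1$, not $b_2(P_k)=1$, so the simple-case conclusion cannot be applied to $P_k$. More importantly, the paper sidesteps all of this. Working at \emph{finite} $t$ (the polytope formulas are exact, not merely first-order), one feeds $\sigma_1 = \lambda_t \sigma_0$ into the Alexandrov--Fenchel inequality
\[
\Bigl(\int h_A\, d\sigma_2\Bigr)\Bigl(\int h_A\, d\sigma_0\Bigr) \;\leq\; \Bigl(\int h_A\, d\sigma_1\Bigr)^2 \;=\; \lambda_t^2 \Bigl(\int h_A\, d\sigma_0\Bigr)^2,
\]
and testing with $A = P_{j,\pm s}$ (again using Fact~\ref{cor:supportpolytope}) forces $\sigma_2 = \lambda_t^2 \sigma_0$; iterating gives $S_{P_{i,t}} = \sigma_{n-1} = \lambda_t^{n-1} S_P$. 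Minkowski's uniqueness theorem (Fact~\ref{fact:surfaceunique}) then says $P_{i,t}$ is homothetic to $P$ for every $i$ and small $t>0$. The endgame is elementary geometry: if $P$ is not a simplex, some facet $F_i$ has at least two vertices of $P$ lying outside it; these vertices are untouched by $P \mapsto P_{i,t}$, so the edge between them keeps its length, contradicting homothety with ratio $\lambda_t > 1$. No case analysis on combinatorial type, and no Alexandrov--Fenchel \emph{equality} analysis, is needed --- the missing idea in your outline is precisely this inductive use of the AF \emph{inequality} to propagate $\sigma_1 \propto \sigma_0$ up to $\sigma_{n-1} \propto \sigma_0$.
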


For reader's convenience, we sketch the ideas of the proof below. The proof uses the following fact (see \cite[Lemma $3.1$]{SSZ1}).
\begin{fact}
\label{cor:supportpolytope}
Let $P=\cap_{i=1}^N H^-(u_i,h_i)$ be an $n$-polytope, with outer normal vectors $u_1, ... , u_N$, and with support vector $(h_i)_{i\leq N}$. Let $i\leq N$, and let $|t|$ be small enough. Then the mixed surface area measures $\sigma_r:=S(P[n-1-r], P_{i,t}[r],.)$, all have the same support as $\sigma_0=S_P$.
\end{fact}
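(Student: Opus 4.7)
My plan is to use the explicit formula for the mixed surface area measure of polytopes,
$$\sigma_r = \sum_{u \in \mathbb{S}^{n-1}} V_{n-1}\bigl(P^u[n-1-r],\, P_{i,t}^u[r]\bigr)\, \delta_u,$$
so that $\text{supp}(\sigma_r) = \{u \in \mathbb{S}^{n-1} : V_{n-1}(P^u[n-1-r], P_{i,t}^u[r]) > 0\}$, and to show this set coincides with $E(P) = \text{supp}(S_P)$ for every $r\in\{0,\ldots,n-1\}$. The crucial preliminary is a combinatorial stability statement: for $|t|$ small enough, $P_{i,t}$ and $P$ have the same normal fan. Indeed, the set of support vectors $(h_1, \ldots, h_N) \in \R^N$ for which $\bigcap_j H^-(u_j, h_j)$ realizes a prescribed face lattice is open, so the line $s \mapsto (h_1, \ldots, h_i+s, \ldots, h_N)$ stays in it for $|s|$ small. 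In particular $E(P_{i,t}) = E(P)$, and for every $u \in \mathbb{S}^{n-1}$, the faces $P^u$ and $P_{i,t}^u$ correspond to the same cone of the normal fan, hence have the same dimension and parallel affine hulls.

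From here I split into two cases. If $u \in E(P)$, then both $P^u$ and $P_{i,t}^u$ are $(n-1)$-dimensional convex bodies in parallel affine hyperplanes; translating into $u^\perp$ yields two convex bodies with nonempty interior in $\R^{n-1}$, and mixed volumes of full-dimensional convex bodies are strictly positive, so $V_{n-1}(P^u[n-1-r], P_{i,t}^u[r]) > 0$ for every $r$, giving $u\in\text{supp}(\sigma_r)$. Conversely, if $u \notin E(P)$, then the stability step forces both $P^u$ and $P_{i,t}^u$ to lie in parallel affine subspaces of dimension at most $n-2$ inside $u^\perp$; any Minkowski combination $\lambda P^u + \mu P_{i,t}^u$ then still lies in a single $(\leq n-2)$-dimensional affine subspace of $u^\perp$, so the $(n-1)$-dimensional mixed volume vanishes. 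Combining the two cases yields $\text{supp}(\sigma_r) = E(P) = \text{supp}(S_P)$ for every $r$.

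The main obstacle is justifying the combinatorial stability claim. I would argue it by writing each face $F$ of $P$ in the canonical form $F = \bigcap_{j \in J(F)} \{x : \langle u_j, x\rangle = h_j\} \cap \bigcap_{j \notin J(F)} H^-(u_j, h_j)$, where $J(F) \subset [N]$ is the set of indices of facets of $P$ containing $F$, with the inequalities of the second intersection realized strictly on the relative interior of $F$. Since $F$ has nonempty relative interior, perturbing a single $h_i$ by small $|t|$ keeps all the strict inequalities strict on a nonempty open set, while the equalities indexed by $J(F)$ continue to define a parallel affine flat of the same dimension (unchanged if $i \notin J(F)$, translated if $i \in J(F)$). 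This produces, for each face $F$ of $P$, a face $F^{(t)}$ of $P_{i,t}$ with the same dimension and parallel affine hull, establishing the combinatorial equivalence invoked above.
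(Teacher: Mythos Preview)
Your argument is correct. The paper itself does not prove this fact; it simply cites \cite[Lemma~3.1]{SSZ1}. Your direct argument via normal-fan stability is precisely the standard one underlying that reference: once $P$ and $P_{i,t}$ share the same normal fan, for every $u$ the faces $P^u$ and $P_{i,t}^u$ have equal dimension and parallel affine hulls, which forces $V_{n-1}(P^u[n-1-r],P_{i,t}^u[r])>0$ exactly when $u\in E(P)$.

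A small remark on presentation: in the case $u\notin E(P)$, the crucial point (which you state) is \emph{parallelism} of the affine hulls, not merely that each face is at most $(n-2)$-dimensional; without it the Minkowski sum could be $(n-1)$-dimensional. You justify parallelism correctly through the normal fan (the affine hull of a face is the orthogonal complement of the linear span of its normal cone), but it might be worth making this dependence explicit in a single sentence, since it is the hinge of the vanishing claim.
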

\begin{proof}[Proof of Theorem \ref{thm:polytope}].
Assume $P$ is an $n$-polytope and $b_2(P)=1$. In other words $F(L_1,L_2)\geq 0$ for all polytopes $L_1, L_2$, where
$F(A,B)=V_n(A,P[n-1])V_n(B,P[n-1])-V_n(A,B,P[n-2])V_n(P).$

Denote $u_1, ... , u_N$ the outer normal vectors of $P$, and $F_i=P^{u_i}$ the associated facets.
Thanks to the integral formula \eqref{eq_polytope} and to above Fact \ref{cor:supportpolytope}, the hypothesis $F(L_1,L_2)\geq 0$, when applied to $L_1=P_{i,t}$ and $L_2=P_{j,s}$, yields that, for any $|s|, |t|$ small, and $i, j \leq N$:
$$F(L_1,L_2)=\left(V(P)+\frac{t}{n}V_{n-1}(F_i)\right)\left( V(P)+\frac{s}{n} V_{n-1}(F_j)\right) - \left( V(P)+\frac{t}{n}V_{n-1}(F_i)+\frac{s}{n}V_{n-1}(F'_j)\right) V(P) \geq 0$$
where $V_{n-1}(F'_j):=V_{n-1}(P_{i,t}^{u_j},P^{u_j}[n-2])$ (this notation is justified by Minkowski's existence theorem \footnote{which tells us that there exists $P'=P'_{i,t}$ such that $V(L,P_{i,t},P[n-2])=V(L,P'[n-1])$ for all $L$}, though Minkowski's theorem is not needed for the current proof).

Once simplified, we obtain $s \left[ V(F_j) \left( 1+ \frac{t}{n} \frac{V(F_i)}{V(P)} \right) - V(F'_j)\right] \geq 0$, for any $|s|< \delta$ and $j\leq N$.
It follows that $V(F'_j)=\lambda_t V(F_j)$ for all $j\leq N$, with $\lambda_t=1+ \frac{t}{n} \frac{V(F_i)}{V(P)}=\frac{V(P_{i,t},P[n-1])}{V(P)}$.

Since $\sigma_1=S(P_{i,t},P[n-2],.)$ has same support as $S_P$ (cf. Fact \ref{cor:supportpolytope}), the $N$ equations $V(F'_j)=\lambda_t V(F_j)$ are enough to conclude that $\sigma_1=\lambda_t S_P=\lambda_t \sigma_0$, i.e. to conclude that $\sigma_1$ is proportional to $\sigma_0$.

One may rewrite $F(A,P_{i,t})\geq 0$ as $\int h_A d\sigma_1 \leq \lambda_t \int h_A d\sigma_0$, from which we have just deduced that $\sigma_1=\lambda_t \sigma_0$ (by taking $A=P_{j,s}$ for various $j\leq N$ and $s=\pm \delta$).
Aleksandrov-Fenchel inequality $V(A,P_{i,t},P_{i,t},P[n-3])V(A,P[n-1]) \leq V(A,P_{i,t},P[n-2])^2$ can be written as:
$$\left(\int h_A d\sigma_2\right) \left(\int h_A d\sigma_0\right) \leq \left( \int h_A d\sigma_1\right)^2.$$
Therefore, $\sigma_1=\lambda_t \sigma_0$ implies that $\int h_A d\sigma_2 \leq \lambda_t^2 \int h_A d\sigma_0$, for all $A$.

Similarly as above, by taking $A=P_{j,s}$ for $j\leq N$ and $s=\pm \delta$, and using Fact \ref{cor:supportpolytope}, one deduces from these inequalities that $\sigma_2=\lambda_t^2 \sigma_0$.

An induction argument (using some more Aleksandrov-Fenchel inequalities, and Fact \ref{cor:supportpolytope}) gives $\sigma_r=\lambda_t^r \sigma_0$, for all $r\leq n-1$.
In particular, one obtains $S_{P_{i,t}}=\sigma_{n-1}=\lambda_t^{n-1}\sigma_0=\lambda_t^{n-1} S_P$, which implies (by  Fact \ref{fact:surfaceunique}) that $P_{i,t}$ and $P$ are homothetic.

To conclude, it only remains to argue that a polytope with $n+2$ vertices (or more), always have a facet $F_i$, such that for all $t>0$ small enough, $P_{i,t}$ is not homothetic with $P$. This follows from the fact that if $P$ is not an $n$-simplex, then $P$ has a facet $F_i=P^{u_i}$, such that at least two vertices of $P$ lie outside of $F_i$. Denote $v_0, v_1$ two such vertices.

Note that when $t>0$ is small enough, all vertices of $P$ which lie outside of $F_i$, are still vertices of $P_{i,t}$. In particular, the distance $||v_1-v_0||$ is left unchanged by the perturbation. If $P_{i,t}$ was homothetic to $P$, since we have seen $\sigma_1=\lambda_t \sigma_0$, then any distance $||v_j-v_k||$ between two vertices of $P$, shall be multiplied by $\lambda_t>1$ under the perturbation: if we had $P_{i,t}=x+\lambda_t P$, then $v'_j=x+\lambda_t v_j$ would be the vertices of $P_{i,t}$, and in particular we should have $||v'_1-v'_0|| > ||v_1-v_0||$ (whereas we have $||v'_1-v'_0||=||v_1-v_0||$ since $v'_1=v_1$ and $v'_0=v_0$ here).
\end{proof}

Next, we wish to recall a few properties which are known to exclude a convex body $K$ from minimizing $b_2(.)$. The next proposition states that a $b_2$ minimizer must be indecomposable: this is \cite[Theorem 3.3]{SZ}.

\begin{prop}
\label{decomposable}
Assume $K$ is decomposable, i.e. $K=A+B$, with $A$ and $B$ not homothetic. Then $b_2(K)>1$.
\end{prop}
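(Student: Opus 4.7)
The plan is to evaluate the ratio appearing in the definition of $b_2(K)$ at the specific test pair $(L_1, L_2) = (A, B)$ coming from the given decomposition, and to show directly that this ratio strictly exceeds $1$. Setting $a := V_n(A, A, K[n-2])$, $b := V_n(A, B, K[n-2])$, and $c := V_n(B, B, K[n-2])$, the multilinearity and symmetry of mixed volumes together with $K = A + B$ immediately yield
\[
V_n(A, K[n-1]) = a + b, \qquad V_n(B, K[n-1]) = b + c, \qquad V_n(K) = a + 2b + c.
\]
A short algebraic manipulation then shows that the strict inequality
$V_n(A, B, K[n-2])\, V_n(K) > V_n(A, K[n-1])\, V_n(B, K[n-1])$
is equivalent to $b^2 > ac$.

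The corresponding non-strict form $b^2 \geq ac$ is precisely the Alexandrov-Fenchel inequality applied to the pair $(A, K)$ with the remaining $n-2$ entries all equal to $K$: using $V_n(A, K, K[n-2]) = V_n(A, K[n-1])$ this reads
\[
V_n(A, K[n-1])^2 \;\geq\; V_n(K)\cdot V_n(A, A, K[n-2]).
\]
This specialization (sometimes called Minkowski's quadratic inequality) admits a classical equality characterization: equality holds if and only if $A$ and $K$ are homothetic.

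The final step is to rule out that $A$ and $K$ are homothetic, given that $A, B$ are not. If $K = \mu A + s$ for some $\mu > 0$ and $s \in \R^n$, then comparing support functions via $h_K = h_A + h_B$ gives $h_B = (\mu - 1)\, h_A + \langle s, \cdot\rangle$. Sublinearity of $h_B$ forces $\mu \geq 1$; the case $\mu = 1$ makes $B$ a singleton, and $\mu > 1$ makes $B$ a positive dilate-translate of $A$, both contradicting the assumption that $A, B$ are not homothetic (in the extended sense). Consequently the inequality above is strict for $(A, K)$, giving $b^2 > ac$ and thus $b_2(K) > 1$. The only delicate ingredient in the argument is invoking the equality case of this specialization of Alexandrov-Fenchel, but this is a well-established classical result (cf.\ \cite{Sch1}) that does not depend on the still-open problem of characterizing equality in the general Alexandrov-Fenchel inequality.
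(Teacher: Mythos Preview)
Your proof is correct. The paper does not give its own argument here, citing \cite[Theorem~3.3]{SZ} instead; your direct approach---evaluating the $b_2$ ratio at the summands $(A,B)$, reducing via multilinearity to $b^2>ac$, and invoking the equality case of Minkowski's quadratic inequality (cf.\ \cite[\S7.6]{Sch1})---is precisely the standard argument and matches the proof in \cite{SZ}.
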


%The proof can be found in \cite[Theorem 3.3]{SZ}: we give a sketch of their proof here.

It turns out that Theorem~\ref{thm:polytope} and Proposition~\ref{decomposable} are merely special cases of a larger result. In \cite{SSZ2}, the concept of \textit{weak decomposability} was introduced.

\begin{dfn}
Let $K$ be a convex body with non-empty interior. We say that $K$ is weakly decomposable, if there exists $L$, non-homothetic to $K$, such that $supp(S_{K+L}) \subset supp(S_K)$.
\end{dfn}

\emph{Remark $1$} Any smooth convex body is weakly decomposable, because $supp(S_K)=\mathbb{S}^{n-1}$ (take any polytope for $L$). Likewise, any convex body such that $supp(S_K)$ contains an open subset of $\mathbb{S}^{n-1}$, is weakly decomposable.

% It is likely that having at least one regular point $u$ in supp$(S_K)$, is a sufficient condition for being weakly decomposable (see further Section with the body $K_{\epsilon}$, which seems a good candidate for $S_{K+K_{\epsilon}} << S_K$).

\emph{Remark $2$} The simplex is the only $n$-polytope which is not weakly decomposable  (see Lemma~\ref{lem:polyisweaklynotsimplex} in the Appendix for a proof). This is why Theorem~\ref{thm:weakly} below implies Theorem~\ref{thm:polytope} above.

\emph{Remark $3$.} If $K$ is decomposable, say $K=A+B$ with $A, B$ not homothetic (and therefore, $A, K$ not homothetic), then $K$ is weakly decomposable. Indeed, choose $L=A$. Then:
$$
S_{K+L}=S_{2A+B}=\sum_{k=0}^{n-1} {{n-1}\choose k} 2^{n-1-k} \sigma_k \hspace{2mm} \text{ while }\hspace{2mm} S_K=\sum_{k=0}^{n-1} {{n-1}\choose k} \sigma_k
$$
where $\sigma_k=S(A[n-1-k], B[k], .)$ are the mixed surface area measures between $A, B$. It follows that $supp(S_K)=supp(S_{K+L})=\cup_{k=0}^{n-1} supp(\sigma_k)$.

The following theorem is due to Saroglou, Soprunov, and Zvavitch (see \cite[Theorem 5.7]{SSZ2}).

\begin{thm}
\label{thm:weakly}
Let $K$ be a weakly decomposable body. Then $b_2(K)>1$.
\end{thm}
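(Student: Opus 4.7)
The plan is to argue by contradiction: assume $b_2(K)=1$ and let $L$ be a witness of weak decomposability, namely $L\in\mathcal{K}_0^n$ with $L$ not a homothet of $K$ but $\text{supp}(S_{K+L})\subset\Omega:=\text{supp}(S_K)$. First I note that $S_{K+L}=\sum_{j=0}^{n-1}\binom{n-1}{j}S(L[j],K[n-1-j],\cdot)$ decomposes as a sum of non-negative measures, so each summand is supported in $\Omega$; in particular the mixed surface area measure $\sigma_1:=S(L,K[n-2],\cdot)$ satisfies $\text{supp}(\sigma_1)\subset\Omega$. This is the structural consequence of weak decomposability I will exploit.

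For an arbitrary $g\in\mathcal{C}(\Omega,\R)$, set $W_t:=W(\Omega,h_K+tg)$, which is a convex body in $\mathcal{K}_0^n$ for $|t|$ small (since $\min_{\Omega} h_K>0$), and define $\Psi(t):=F_{K,1}(L,W_t)$. The assumption $b_2(K)=1$ applied to $(L,W_t)\in(\mathcal{K}_0^n)^2$ gives $\Psi(t)\geq 0$; direct expansion shows $\Psi(0)=F_{K,1}(L,K)=V_n(L,K[n-1])V_n(K)-V_n(L,K[n-1])V_n(K)=0$. Thus $t=0$ is a two-sided minimum, and once $\Psi$ is known to be differentiable at $0$ one has $\Psi'(0)=0$. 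Applying Lemma~\ref{lem:variationalmixed} to $V_n(W_t,K[n-1])$ and its mixed-volume analogue to $V_n(W_t,L,K[n-2])=\tfrac{1}{n}\int h_{W_t}\,d\sigma_1$ (applicable precisely because $\text{supp}(\sigma_1)\subset\Omega$), this derivative is
\[
\Psi'(0)=\frac{1}{n}\Big[V_n(L,K[n-1])\int_{\Omega} g\,dS_K \;-\; V_n(K)\int_{\Omega} g\,d\sigma_1\Big]=0.
\]

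Varying $g\in\mathcal{C}(\Omega,\R)$, and using that $S_K$ and $\sigma_1$ are both concentrated on $\Omega$, I read off the measure identity $\sigma_1=\lambda_L\,S_K$ on $\mathbb{S}^{n-1}$, where $\lambda_L:=V_n(L,K[n-1])/V_n(K)$. By homogeneity of mixed surface area measures, $\lambda_L S_K=S(\lambda_L K,K[n-2],\cdot)$, so $S(L,K[n-2],\cdot)=S(\lambda_L K,K[n-2],\cdot)$. The uniqueness theorem for mixed surface area measures (the map $A\mapsto S(A,K[n-2],\cdot)$ is injective modulo translation when $K\in\mathcal{K}_0^n$, cf.\ \cite{Sch1}, Section~7.2) then forces $L=\lambda_L K+x$ for some $x\in\R^n$, contradicting the choice of $L$ as non-homothetic to $K$.

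The delicate ingredient is the mixed-volume version of Aleksandrov's variational lemma needed to compute $\Psi'(0)$, namely $\tfrac{d}{dt}\big|_{t=0}V_n(W_t,L,K[n-2])=\tfrac{1}{n}\int g\,d\sigma_1$. Its proof parallels that of Lemma~\ref{lem:variationalmixed} and Lemma~\ref{lem:pointwiseCV}, with $\sigma_1$ in place of $S_K$ throughout; the essential requirement is $\text{supp}(\sigma_1)\subset\Omega$, so that $g$ is defined on the integration support and the pointwise convergence $h_{W_t}(u)=h_K(u)+tg(u)+o(t)$ can be propagated $\sigma_1$-a.e. and integrated under dominated convergence. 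Once this derivative formula and the mixed-measure uniqueness are in place, the rest of the argument is the short computation and bookkeeping above.
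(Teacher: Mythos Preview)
Your argument is correct and close to the paper's up to the identity $\sigma_1=\lambda_L\,S_K$; this is exactly the first step of the induction sketched in the paper (following \cite{SSZ2}). The gap is the final sentence: the ``uniqueness theorem for mixed surface area measures'' you invoke --- injectivity of $A\mapsto S(A,K[n-2],\cdot)$ modulo translation for arbitrary $K\in\mathcal{K}_0^n$ --- is \emph{not} a known theorem. Integrating your measure identity against $h_L$ gives $V(L,L,K[n-2])\,V(K)=V(L,K[n-1])^2$, i.e.\ equality in Minkowski's second inequality; for general $K$ this equality case is part of the open Alexandrov--Fenchel extremals problem, and there is no black-box statement in \cite{Sch1} that lets you conclude $L$ is homothetic to $K$ from $\sigma_1=\lambda_L S_K$ alone.

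The paper avoids this precisely by \emph{not} stopping at $r=1$. Writing $\sigma_r=S(L[r],K[n-1-r],\cdot)$, one repeats your variational step: from $\sigma_1=\lambda\,\sigma_0$ and Alexandrov--Fenchel one gets $\int h_A\,d\sigma_2\leq\lambda^2\int h_A\,d\sigma_0$ for all $A$; plugging in Wulff perturbations $A=W_t$ (and using again that $\text{supp}(\sigma_2)\subset\text{supp}(S_{K+L})\subset\Omega$, which is exactly where weak decomposability is used at each stage) upgrades this to $\sigma_2=\lambda^2\sigma_0$. Iterating, $\sigma_{n-1}=S_L=\lambda^{n-1}S_K$, and \emph{then} the classical Minkowski uniqueness (Fact~\ref{fact:surfaceunique}) --- which \emph{is} available --- forces $L$ to be a homothet of $K$. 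So your proof becomes complete once you replace the appeal to mixed-measure uniqueness by this bootstrap to $r=n-1$.

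A secondary point: your ``delicate ingredient'' $\frac{d}{dt}\big|_{t=0}V_n(W_t,L,K[n-2])=\tfrac{1}{n}\int g\,d\sigma_1$ does need justification beyond Lemma~\ref{lem:variationalmixed} as stated, since Lemma~\ref{lem:pointwiseCV} only gives the pointwise derivative $S_K$-a.e., and $\sigma_1$ could a priori charge an $S_K$-null subset of $\Omega$. This is handled in \cite{SSZ2} (and is the content of \cite[Theorem~3.5]{SSZ2}), but it is worth being explicit that absolute continuity of $\sigma_1$ with respect to $S_K$ is not assumed --- only containment of supports.
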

In other words, like decomposability, weak decomposability is an excluding condition (when searching minimizers of $b_2(.)$). The proof reproduces the induction of Theorem~\ref{thm:polytope} replacing $P_{j,s}$ with Wulff shape perturbations. While the discrete nature of surface area measures of polytopes (and fact \ref{cor:supportpolytope}, i.e. nice behaviour of the support of surface area measures, with respect to Minkowski sum with a perturbation)  was a key ingredient for iteratively deducing $\sigma_r=\lambda_t^r \sigma_0$, in the above proof of Theorem \ref{thm:polytope}, in this other setting, fact \ref{cor:supportpolytope} 
wouldn't hold (if we also choose to replace $P_{i,t}$ with a Wulff-shape perturbation), but comes for free via the assumption $S_{K+L}<<S_K$, and the main ingredient to derive
$\sigma_r=\lambda_t^r \sigma_0$ (where now $\sigma_r=S(L[r], K[n-1-r],.)$), is then Alexandrov's variational lemma (see Lemmas \ref{lem:variationalmixed} and  \ref{lem:pointwiseCV}). For more details, see \cite{SSZ2}, p.16.

\noindent In view of the above theorem, it was asked in \cite{SSZ2} whether $n$-simplices are the only convex bodies $K\in \conbod_0$, which are not weakly decomposable. If true, this would solve Conjecture~\ref{conj:main}.

\begin{question} Is it true that the cone $C=Conv(e_n, B)$, where $B=\pi_{e_n^{\perp}}(B_2^n) \subset e_n^{\perp}$ is the unit ball in $e_n^{\perp}$, is  weakly indecomposable ?
\end{question}
 
\section{An excluding condition with isoperimetric ratios}
\label{sec:exclude}
If a property $\mathcal{P}$ is such that when $K$ has $\mathcal{P}$, then $K$ cannot be a minimizer of the $b_2(.)$ constant, we shall say that $\mathcal{P}$ is an excluding condition (i.e. for belonging to the set of minimizers). Theorem~\ref{thm:weakly} above, for example, shows that being weakly decomposable (which in particular includes being a polytope other than an $n$-simplex, or being decomposable) is an excluding condition. In this section we give a new excluding condition, which concerns bodies which at least one facet (but not necessarily polytopes). We will work in $\R^n$ with $n\geq 3,$ This condition implies that having infinitely many facets is an excluding condition  (see Theorem~\ref{thm:infinite} below). 

If $K$ is a $k$-dimensional convex body, with $k\geq 2$, then denote $\Isop(K)=\frac{1}{k}\frac{|\partial K|}{|K|}:=\frac{1}{k}\frac{\Vol_{k-1}(\partial K)}{\Vol_k(K)}$.

In \cite{S}, the author has shown the following.

\begin{prop}
\label{prop:isopcond}
Let $K$ be a convex body such that $K$ has a facet $F$ satisfying:
$\Isop(F)>\Isop(K)$. Then $b_2(K)>1$.
\end{prop}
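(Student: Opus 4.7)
The plan is to derive a contradiction from the assumption $b_2(K)=1$ by applying the Bezout inequality to two specific test bodies. Let $u_0$ denote the outer normal of the facet $F=K^{u_0}$. Take $A:=B_2^{n-1}$, the unit Euclidean ball of the hyperplane $u_0^\perp\subset \R^n$ (an $(n-1)$-dimensional compact convex subset of $\R^n$), and take $B:=K_t:=W(\Sph,\, h_K+tf_\epsilon)$, the Wulff-shape perturbation of $K$ associated to a continuous bump $f_\epsilon\in\mathcal{C}(\Sph,[0,1])$ supported in the cap $U(u_0,\epsilon)$ with $f_\epsilon(u_0)=1$. The successive limits $\epsilon\to 0^+$, $t\to 0^+$ will be taken at the end.

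By Lemma~\ref{lem:variationalmixed}, $V_n(K_t,K[n-1])=V_n(K)+\tfrac{t}{n}\int f_\epsilon\,dS_K+o(t)$, and dominated convergence gives $\int f_\epsilon\,dS_K\to S_K(\{u_0\})=V_{n-1}(F)=|F|$ as $\epsilon\to 0$. Analogously (replacing $S_K$ by the mixed surface area measure $S(A,K[n-2],\cdot)$), one obtains the first-order formula
\[V_n(A,K_t,K[n-2])=V_n(A,K[n-1])+\tfrac{t}{n}\int f_\epsilon\,dS(A,K[n-2],\cdot)+o(t).\]
The integral tends as $\epsilon\to 0$ to the atom $S(A,K[n-2],\{u_0\})=V_{n-1}(A^{u_0},F[n-2])$, by the standard point-mass formula for mixed surface area measures. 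Since $A=B_2^{n-1}\subset u_0^\perp$ one has $A^{u_0}=A$, and the integral representation of the first mixed volume inside $u_0^\perp$ yields $V_{n-1}(B_2^{n-1},F[n-2])=\frac{|\partial F|}{n-1}$.

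Substituting these expansions into the Bezout inequality $V_n(A,B,K[n-2])V_n(K)\leq V_n(A,K[n-1])V_n(B,K[n-1])$, the zeroth-order terms coincide; matching first-order terms in $t$ (dividing by $t>0$ and taking the limits) gives
\[\frac{V_n(K)\,|\partial F|}{n(n-1)}\leq \frac{|F|\,V_n(B_2^{n-1},K[n-1])}{n},\]
that is, $\Isop(F)\leq V_n(B_2^{n-1},K[n-1])/V_n(K)$. By monotonicity of mixed volumes and the inclusion $B_2^{n-1}\subset B_2^n$, we have $V_n(B_2^{n-1},K[n-1])\leq V_n(B_2^n,K[n-1])=|\partial K|/n$; hence $\Isop(F)\leq \Isop(K)$, contradicting the hypothesis.

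The principal technical obstacle is the first-order expansion of $V_n(A,K_t,K[n-2])$: differentiating under the integral against $S(A,K[n-2],\cdot)$ requires pointwise convergence $h_{K_t}(u)=h_K(u)+tf_\epsilon(u)+o(t)$ to hold $S(A,K[n-2],\cdot)$-almost everywhere, while Lemma~\ref{lem:pointwiseCV} only supplies this $S_K$-almost everywhere. This holds automatically when $\mathrm{supp}(S_K)=\Sph$ (for instance, when $K$ is smooth away from $F$); the general case may then be reduced to this setting by a smoothing approximation, using continuity of the quantities $|F|, |\partial F|, V_n(K), |\partial K|$ in the Hausdorff distance.
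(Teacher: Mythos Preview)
Your overall strategy---testing the Bezout inequality with $A=B_2^{n-1}\subset u_0^\perp$ and a Wulff perturbation $B=K_t$, then reading off the first-order terms---is sound and is precisely the mechanism used elsewhere in the paper (compare the proof of Theorem~\ref{thm:main}). The computations $S(A,K[n-2],\{u_0\})=V_{n-1}(B_2^{n-1},F[n-2])=\tfrac{|\partial F|}{n-1}$ and $V_n(B_2^{n-1},K[n-1])\le V_n(B_2^n,K[n-1])=\tfrac{|\partial K|}{n}$ are correct, and so is the resulting contradiction $\Isop(F)\le\Isop(K)$.

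The gap you flag is real, but the remedy you propose does not work. A smoothing approximation of $K$ destroys the facet $F$: neither $|F|$ nor $|\partial F|$ is continuous under Hausdorff approximation by smooth bodies (they jump to $0$), so the argument cannot be rescued that way. The actual fix is simpler and requires no approximation. You do not need the two-sided expansion of $V_n(A,K_t,K[n-2])$; a one-sided estimate suffices. Since $t>0$ gives $K\subset K_t$, the integrand $h_{K_t}-h_K$ is nonnegative, so
\[
V_n(A,K_t,K[n-2])-V_n(A,K[n-1])=\tfrac{1}{n}\!\int(h_{K_t}-h_K)\,dS(A,K[n-2],\cdot)\ \ge\ \tfrac{1}{n}\bigl(h_{K_t}(u_0)-h_K(u_0)\bigr)\cdot\tfrac{|\partial F|}{n-1},
\]
dropping all but the atom at $u_0$. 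On the other side, $h_{K_t}\le h_K+tf_\epsilon$ gives $V_n(K_t,K[n-1])-V_n(K)\le\tfrac{t}{n}\int f_\epsilon\,dS_K$. The key point you missed is that because $F$ is a facet, $u_0$ is an \emph{atom} of $S_K$ (of mass $|F|>0$), so it automatically belongs to the $S_K$-full-measure set of Lemma~\ref{lem:pointwiseCV}; hence $(h_{K_t}(u_0)-h_K(u_0))/t\to f_\epsilon(u_0)=1$ directly, with no need to control $S(A,K[n-2],\cdot)$-a.e.\ convergence. Dividing by $t$, letting $t\to 0^+$ and then $\epsilon\to 0^+$ now yields $\tfrac{V_n(K)|\partial F|}{n-1}\le V_n(B_2^{n-1},K[n-1])\,|F|$, and the rest of your argument goes through.
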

%\begin{prop}
%Let $K$ be a convex body such that $K$ has a facet $F$ satisfying:
%$$
%\frac{\Vol_{n-2}(\partial F)}{(n-1)\Vol_{n-1}(F)} > \frac{\Vol_{n-1}(\partial K)}{n\Vol_n(K)}.
%$$

Recall that for any (reversible) affine transform $T$, $b_2(K)=b_2(TK)$. On the other hand, the quantity
$$\sup_F \frac{\Isop(F)}{\Isop(K)}=\sup_{F} \frac{n \Vol_{n-2}(\partial F) \Vol_n(K)}{(n-1)\Vol_{n-1}(F) \Vol_{n-1}(\partial K)},$$ where the supremum is over the facets, is not affine-invariant. Thus, the above proposition immediately implies the following one.
\begin{prop}
\label{p_facets_image}
Let $K$ be a convex body such that one of its affine pairs $K'=TK$ has a facet $F'$ satisfying:
$$\frac{\Vol_{n-2}(\partial F')}{(n-1)\Vol_{n-1}(F')} > \frac{\Vol_{n-1}(\partial K')}{n\Vol_n(K')}.$$
Then $b_2(K)>1$.
\end{prop}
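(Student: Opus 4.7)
The plan is to recognize that this proposition is a direct consequence of Proposition~\ref{prop:isopcond} combined with the affine invariance of $b_2$, so my proof will essentially just unpack the definition of $\Isop(\cdot)$ and invoke these two facts.

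First, I would rewrite the hypothesis in terms of the isoperimetric ratios introduced just above the statement. Since $F'$ is an $(n-1)$-dimensional convex body (as a facet of $K'\subset \R^n$) and $K'$ is $n$-dimensional, the definition $\Isop(L) = \tfrac{1}{k}\tfrac{|\partial L|}{|L|}$ (for $L$ of dimension $k$) gives
\begin{equation*}
\Isop(F') = \frac{1}{n-1}\frac{\Vol_{n-2}(\partial F')}{\Vol_{n-1}(F')}, \qquad \Isop(K') = \frac{1}{n}\frac{\Vol_{n-1}(\partial K')}{\Vol_n(K')}.
\end{equation*}
Hence the assumed inequality is exactly $\Isop(F') > \Isop(K')$.

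Next, I would apply Proposition~\ref{prop:isopcond} to the convex body $K'$: since $K'$ has a facet $F'$ with $\Isop(F') > \Isop(K')$, that proposition yields $b_2(K') > 1$. Finally, I invoke the affine invariance of $b_2$ (recorded immediately after property (e) of mixed volumes in Section~\ref{sec_notation}, since $b_2$ is a ratio of mixed volumes and such ratios are preserved under $\mathrm{Aff}^*(\R^n)$): writing $K' = TK$ with $T \in \mathrm{Aff}^*(\R^n)$, one has $b_2(K) = b_2(TK) = b_2(K') > 1$.

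There is no real obstacle here; the content of the statement is simply that Proposition~\ref{prop:isopcond}, although phrased in terms of the affine-non-invariant quantity $\Isop$, may be applied to any affine image of $K$ thanks to the affine invariance of $b_2$. The only thing to be a little careful about is checking that the ratio of facet surface area to facet volume, together with the surface-to-volume ratio of $K'$, reproduces precisely the quantities $\Isop(F')$ and $\Isop(K')$ up to the factors of $(n-1)$ and $n$ in the denominators, which matches the formula in the displayed hypothesis.
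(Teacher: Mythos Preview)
Your proof is correct and matches the paper's approach exactly: the paper simply states that this proposition follows immediately from Proposition~\ref{prop:isopcond} together with the affine invariance of $b_2$, which is precisely what you unpack.
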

In other words, if $\max_T \sup_F \frac{\Isop(TF)}{\Isop(TK)} >1$,
%$$max_{T}\sup{F} \frac{n \Vol_{n-2}(\partial F) \Vol_n(K)}{(n-1)\Vol_{n-1}(F) \Vol_{n-1}(\partial K)} >1,$$
where the max is over $T\in O(n)$ (and the supremum is over facets of $K$), then $b_2(K)>1.$ (see \cite{S} for some applications of the above proposition).

Moreover, Proposition~\ref{p_facets_image} yields a short proof of the following result, which states $\Omega_{n-1}$ (the set of outer-unit normals of facets of a convex body) being infinite (in the sense of cardinality) is an excluding condition.
\begin{thm}[Theorem 4.2 in \cite{SSZ2}]
\label{thm:infinite}
\label{infinitecond}
Let $K$ be a convex body with infinitely many facets. Then $b_2(K)>1$.
\end{thm}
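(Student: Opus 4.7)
My plan is to reduce Theorem~\ref{thm:infinite} directly to the excluding condition in Proposition~\ref{prop:isopcond}: it suffices to exhibit at least one facet $F$ of $K$ with $\Isop(F) > \Isop(K)$. Since $\Isop(K)$ is a fixed finite quantity, it is enough to show that some facets of $K$ have arbitrarily large isoperimetric ratios.

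The first step is to argue that the $(n-1)$-dimensional volumes of the facets must tend to zero. Indeed, the facets $(F_i)_{i \geq 1}$ of $K$ have pairwise disjoint relative interiors and are all contained in $\partial K$, hence
\[
\sum_{i \geq 1} \Vol_{n-1}(F_i) \leq \Vol_{n-1}(\partial K) < \infty.
\]
As the series has infinitely many terms and converges, after reindexing we have $\Vol_{n-1}(F_i) \to 0$.

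The second step is to combine this with the classical isoperimetric inequality in $\R^{n-1}$, applied to each facet $F_i$ viewed as an $(n-1)$-dimensional convex body. This yields a bound of the form
\[
\Isop(F_i) \;\geq\; c_n\, \Vol_{n-1}(F_i)^{-1/(n-1)},
\]
with $c_n = \kappa_{n-1}^{1/(n-1)}$ (up to the usual normalization). Since $\Vol_{n-1}(F_i) \to 0$, we obtain $\Isop(F_i) \to \infty$. In particular, for all $i$ sufficiently large, $\Isop(F_i) > \Isop(K)$, and then Proposition~\ref{prop:isopcond} immediately gives $b_2(K) > 1$.

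I do not foresee a real obstacle here: the content of the theorem is just that having infinitely many facets forces at least one facet to be much ``thinner'' than $K$ itself (in the sense of the isoperimetric ratio), and the isoperimetric inequality makes this quantitative. The only minor point to be careful about is making sure the facets' relative interiors are pairwise disjoint so that the bound $\sum_i \Vol_{n-1}(F_i) \leq \Vol_{n-1}(\partial K)$ is valid, which is standard.
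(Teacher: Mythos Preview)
Your proposal is correct and follows essentially the same approach as the paper: both reduce to Proposition~\ref{prop:isopcond} by observing that the facets' $(n-1)$-volumes must tend to zero (since they sum to at most the finite surface area of $K$), and then apply the isoperimetric inequality in dimension $n-1$ to conclude $\Isop(F_i)\to\infty$. The paper's argument is virtually identical, including the explicit constant $\kappa_{n-1}^{1/(n-1)}$.
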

\begin{proof}
If $K$ has infinitely many facets, then infinitely many of them will satisfy $\Isop(F)>\Isop(K),$
so that Theorem \ref{thm:infinite} follows from Proposition \ref{prop:isopcond}.
%\textcolor{blue}{You already used the notation Iso for the isoperimetric ratio in the introduction. I suggest a different notation}

Let  $C$ be a $d$-dimensional convex body. Then 
$$\Isop(C)=\frac{1}{d}\frac{\Vol_{d-1}(\partial C)}{\Vol_d(C)}=\frac{1}{d} \frac{\Vol_{d-1}(\partial C)}{\Vol_d(C)^{d-1/d}}\Vol_d(C)^{-1/d} \geq \frac{1}{d}\frac{\Vol_{d-1}(\partial B_2^d)}{\Vol_{d}(B_2^d)^{d-1/d}}\Vol_d(C)^{-1/d}=\frac{\kappa_d^{1/d}}{\Vol_d(C)^{1/d}},$$ where we used the isoperimetric inequality, and where $\kappa_d$ denotes the volume of the $d$-dimensional euclidean ball.

It follows that $\Isop(F) \to +\infty$ when $\Vol_{n-1}(F) \to 0$. Since a convex body only has finite surface area measure, it follows that if $K$ has infinitely many facets, then all but finitely many of them will satisfy $\Isop(F)>\Isop(K)$.
\end{proof}

We conclude this section with two questions: the first one asks whether a discrete analog to Ball's reverse isoperimetric inequality holds, among polytopes. We denote by $\Delta$ an $n$-simplex. We denote $\mathcal{P}^n \subset \mathcal{K}_0^n$, the class of $n$-dimensional polytopes (living in $\R^n$).

\begin{question}
\label{quesa}
For $P\in \mathcal{P}^n$, is it true that $\max_{T\in O(n)} \max_F \frac{\Isop(TF)}{\Isop(TP)}$ is minimized when $P=\Delta$?
\end{question}
A related question is the following:

\begin{question}
\label{quesb}
Let $P\in \mathcal{P}^n$, other than a simplex. Do we have $\max_{T\in O(n)} \max_F \frac{\Isop(TF)}{\Isop(TP)} >1$ ?
\end{question}
In words, does there always exist an affine transform $T$ such that  $P'=TP$ has at least one facet $F'$ satisfying $\Isop(F')>\Isop(P')$ ?

\noindent If the latter question is in the affirmative, then Proposition \ref{p_facets_image} above yields an alternative proof of Theorem \ref{thm:polytope}. Note that a positive answer to Question \ref{quesb}, implies a positive answer to Question \ref{quesa} (though we don't know the exact value of $\max_{T\in O(n)} \max_F \frac{\Isop(TF)}{\Isop(T\Delta)}$, it follows from Proposition \ref{p_facets_image}, that $\max_{T\in O(n)} \max_F \frac{\Isop(TF)}{\Isop(T\Delta)} \leq 1$).

Another related question is the following: can one find a polytope $P$ such that, when $T\in O(n)$, the max-ratio $\max_F \frac{\Isop(TF)}{\Isop(TP)},$ is not minimized with $TP$ being in its John's position ? A good candidate is perhaps a polytope for which the transforms $T$ with $|det(T)|=1$ and such that $\Vol_{n-1}(\partial (TP))$ is minimized, are not the same as these which put $P$ in a John's position (i.e. these $T$ such that an ellipsoid contained in $TP$ which has maximal volume, is a Euclidean ball).

\section{An  excluding condition for minimizing $b_2$}
\label{sec:dual}

In \cite{SSZ1}, it is proved that a \emph{strict} point on the boundary (which occurs for instance if $\partial K$ has positive curvature somewhere) is an excluding condition. A strict point is a point $y\in \partial K$, such that no segment $L$ lying on the boundary, contains $y$. We state here an excluding condition which implies the one just mentionned.

 Recall the notation $supp(S_K)=\Omega=\Omega_0 \cup ... \cup \Omega_{n-1}$, with $\Omega_k=\{u\in \Omega: K^u \text{ is $k$-dimensional}\}$. Elements of $\Omega_0$ shall be called \emph{regular directions} of the boundary. If $y\in \partial K$, we remind the notation $\sigma(y)=\{u\in \Sph: y\in K^u\}$.
 
 Note that if $K$ has a strict point (on its boundary), then $\Omega_0\neq \emptyset$: this follows from the fact that $\sigma(z)\cap \Omega \neq \emptyset$, for any $z\in \partial K$. In other words the following condition is a slight strengthening of \cite[Theorem 4.1]{SSZ1}.
 
\begin{thm}
\label{positivecurv}
Assume $\Omega_0 \neq \emptyset$. Then $b_2(K)>1$.
\end{thm}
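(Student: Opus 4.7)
\smallskip\noindent\emph{Proof plan.} Assume $b_2(K)=1$ for contradiction. Given $u_0\in\Omega_0$ with $y_0:=K^{u_0}$, the plan is to exhibit a convex body $L$, not homothetic to $K$, for which $K$ is weakly decomposable with witness $L$; by Theorem~\ref{thm:weakly}, this would force $b_2(K)>1$.

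For $t>0$ sufficiently small, take
\[
L\;=\;W_t\;:=\;K\cap\{x\in\R^n:\langle x,u_0\rangle\leq h_K(u_0)-t\}.
\]
Since $u_0\in\mathrm{supp}(S_K)$ and $K^{u_0}=\{y_0\}$ is a single point, the cutting hyperplane meets $\mathrm{int}(K)$, so $W_t\subsetneq K$ is a convex body carrying a new facet $F_t=W_t^{u_0}$ with $\Vol_{n-1}(F_t)>0$ (and $\Vol_{n-1}(F_t)\to 0$ as $t\downarrow 0$). In particular $S_{W_t}(\{u_0\})=\Vol_{n-1}(F_t)>0$ whereas $S_K(\{u_0\})=\Vol_{n-1}(K^{u_0})=0$, so by Minkowski's uniqueness theorem (Fact~\ref{fact:surfaceunique}), $W_t$ is not homothetic to $K$.

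\smallskip\noindent\emph{Main obstacle.} It remains to verify $\mathrm{supp}(S_{K+W_t})\subset\Omega$. Using the identity $(K+W_t)^u=K^u+W_t^u$, the problem reduces, for $u\notin\Omega$, to showing that the Minkowski sum of two subsets of parallel hyperplanes (each of dimension at most $n-2$) again has dimension at most $n-2$ — hence contributes no $(n-1)$-dimensional Hausdorff mass to $S_{K+W_t}$. The most delicate contribution comes from the ``rim'' $\partial K\cap\{\langle x,u_0\rangle=h_K(u_0)-t\}$ created by the truncation: its outer normals interpolate between $u_0$ and the normals of $K$ at nearby boundary points, and one must verify they stay in the closed set $\Omega$. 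The key leverage is $u_0\in\Omega_0\subset\mathrm{supp}(S_K)$, which ensures every neighborhood of $u_0$ carries positive $S_K$-mass; this should force the rim directions to be approximable by elements of $\Omega$, hence in $\Omega$ itself. Once this support containment is in hand, $K$ is weakly decomposable with witness $W_t$, and Theorem~\ref{thm:weakly} delivers $b_2(K)>1$, closing the argument.

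As a fallback, if the support containment turns out to be too delicate to verify in full generality, one can instead replay the iterative scheme from the proof of Theorem~\ref{thm:weakly} directly with $L=W_t$: using Aleksandrov's variational lemma (Lemma~\ref{lem:variationalmixed}) and the Alexandrov--Fenchel inequality, successively establish $S(W_t[r],K[n-1-r],\cdot)=\lambda_t^r S_K$ for $r=1,\dots,n-1$ (with $\lambda_t=V(W_t,K[n-1])/V(K)$), and then invoke Minkowski's uniqueness to reach the same contradiction with non-homothety.
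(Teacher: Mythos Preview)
Your proposal has a genuine gap, and the fallback does not close it.

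\textbf{The main route.} You never verify the inclusion $\mathrm{supp}(S_{K+W_t})\subset\Omega$, and the heuristic you offer (``rim directions should be approximable by elements of $\Omega$'') is not a proof. At a rim point $z\in\partial K\cap\{\langle x,u_0\rangle=h_K(u_0)-t\}$ the outer normals of $W_t$ fill the entire spherical convex hull of $\{u_0\}\cup\sigma_K(z)$; there is no reason these geodesic arcs should lie in the closed set $\Omega=\mathrm{supp}(S_K)$. The hypothesis $u_0\in\mathrm{supp}(S_K)$ only guarantees that $\Omega$ accumulates at $u_0$, not that $\Omega$ contains a neighbourhood of $u_0$ (think of a body whose surface area measure is supported, near $u_0$, on a great-circle arc through $u_0$ rather than on an open patch). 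Since $\mathrm{supp}(S_{W_t})\subset\mathrm{supp}(S_{K+W_t})$, any such rim direction outside $\Omega$ immediately kills weak decomposability with witness $W_t$.

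\textbf{The fallback.} Replaying the iterative scheme behind Theorem~\ref{thm:weakly} does not avoid the problem: that scheme passes from the inequalities $\int h_A\,d\sigma_1\le\lambda_t\int h_A\,d\sigma_0$ (for all $A$) to the equality of measures $\sigma_1=\lambda_t\sigma_0$ by testing with Wulff perturbations and invoking Lemma~\ref{lem:pointwiseCV}, and this step \emph{requires} $\mathrm{supp}(\sigma_1)\subset\mathrm{supp}(S_K)$. That is precisely the support containment you have not established. So both routes hinge on the same unproved claim.

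\textbf{What the paper does instead.} The paper bypasses weak decomposability entirely and exhibits an explicit pair $(A,B)$ with $F(A,B)>0$. It uses the \emph{same} truncation $A=K_\epsilon$ (your $W_t$), but pairs it with a segment $B=[0,u_1]$, where $u_1\in\sigma(y_0)$ is chosen via Lemma~\ref{lem:folklore} so that $\langle u_1,v\rangle>0$ for every $v\in\sigma(y_0)$. A short compactness argument then gives $\pi_{u_1^\perp}(K_\epsilon)=\pi_{u_1^\perp}(K)$ for small $\epsilon$, whence $V(K_\epsilon,[0,u_1],K[n-2])=V(K,[0,u_1],K[n-2])$ by \eqref{eqn:proj1}. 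Since $h_{K_\epsilon}(u_0)=h_K(u_0)-\epsilon$ and $u_0\in\mathrm{supp}(S_K)$, the integral formula \eqref{eqn:integralformula} gives $V(K_\epsilon,K[n-1])<V(K)$, and $F(K_\epsilon,[0,u_1])>0$ follows in one line. This is far more elementary than invoking Theorem~\ref{thm:weakly}, and it uses only the single hypothesis $u_0\in\Omega_0$ exactly where it is needed.
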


\noindent The proof uses standard compactness arguments, and the formula \eqref{eqn:integralformula}.

We first need to recall a folklore fact. Fix $K$ a convex body, and $y\in\partial K$. Then $\sigma(y)=\{u\in \Sph: y\in K^u\}=\{u\in \Sph: \langle y,u\rangle=h_K(u)\}$ is a non-empty, compact, geodesically convex subset of the sphere, and cannot contain both $u, -u$ (i.e. doesn't contain any pair of antipodal points). From these $4$ properties, one can deduce that $\sigma(y)$ is contained in an open half-sphere $\{v\in \mathbb{S}^{n-1} : \langle v,u_1\rangle >0\}$, whose center $u_1$ can be chosen to be in $\sigma(y)$.

\begin{lem}
\label{lem:folklore}
Let $K$ be a convex body and $y\in \partial K$. Then there exists $u_1\in \sigma(y)$, such that $\langle v,u_1\rangle >0$, for any $v\in \sigma(y)$.
\end{lem}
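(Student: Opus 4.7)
The plan is to take $u_1 := p/\|p\|$, where $p$ is the Euclidean nearest point to the origin in $K' := \mathrm{conv}(\sigma(y)) \subset \mathbb{R}^n$. If $0 \notin K'$, this $p$ is unique and non-zero, and two routine facts then finish the proof: (a) $u_1$ lies back in $\sigma(y)$, and (b) $\langle u_1, v\rangle > 0$ for every $v \in \sigma(y)$.

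The core of the argument is showing $0 \notin K'$, which is where all four listed properties of $\sigma(y)$ get used. Suppose for contradiction $0 \in K'$, and pick a minimal-length representation $0 = \sum_{i=1}^k \lambda_i u_i$ with $u_i \in \sigma(y)$ and $\lambda_i > 0$. For $k=2$, the unit-norm constraint forces $u_1=-u_2$, violating the no-antipode condition. For $k \geq 3$, I would iterate the defining property of geodesic convexity---namely, that the radial projection of the Euclidean segment between two points of $\sigma(y)$ is an arc in $\sigma(y)$---to obtain the useful fact that the radial projection of any non-zero non-negative combination of finitely many elements of $\sigma(y)$ again lies in $\sigma(y)$. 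Applying this to $w := \sum_{i\geq 2} \lambda_i u_i = -\lambda_1 u_1 \neq 0$ yields $-u_1 = w/\|w\| \in \sigma(y)$, again contradicting the absence of antipodal pairs.

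Once $0 \notin K'$ is established, the rest is standard. The point $p$ is a non-zero non-negative combination of elements of $\sigma(y)$, so by the same iteration $u_1 := p/\|p\| \in \sigma(y)$, giving (a). For (b), the first-order optimality of the metric projection of the origin onto $K'$ reads $\langle p, v - p\rangle \geq 0$ for all $v \in K'$, whence $\langle u_1, v\rangle = \langle p, v\rangle/\|p\| \geq \|p\| > 0$ for every $v \in \sigma(y) \subset K'$. The main subtlety, as indicated, lies in the reduction step for $k \geq 3$: collapsing a hypothetical zero convex combination of unit vectors down to a single antipodal pair is precisely where spherical convexity and the no-antipode property must be combined. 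Everything afterwards is bookkeeping with the closest-point projection.
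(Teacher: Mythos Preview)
Your proof is correct. Note, however, that the paper does not actually prove Lemma~\ref{lem:folklore}: it merely records it as a folklore fact, observing that the four properties of $\sigma(y)$ (non-empty, compact, geodesically convex, no antipodal pair) suffice, and then moves directly to the proof of Theorem~\ref{positivecurv}. So there is no proof in the paper to compare against; your argument fills that gap.

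Two minor comments. First, once you have established $0\notin K'$, every non-trivial non-negative combination $\sum_{i\in I}\mu_i v_i$ with $v_i\in\sigma(y)$ is automatically non-zero (otherwise its normalization would exhibit $0\in K'$), so the inductive reduction in part~(a) goes through without any appeal to minimality. Second, if one is willing to use the concrete description $\sigma(y)=N(K,y)\cap\mathbb{S}^{n-1}$ with $N(K,y)$ the normal cone at $y$, then $K'=\mathrm{conv}(\sigma(y))\subset N(K,y)$ and $0\notin K'$ is immediate from $N(K,y)$ being pointed (which holds because $K$ has non-empty interior); this shortcuts the case analysis on $k$. Either way, your nearest-point argument for (a) and (b) is the standard clean finish.
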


%\begin{lem}
%\label{lem:projint}
%Let $K\in \conbod_0$ and take $y\in \partial K$. Then there exists $w\in \sigma(y)$, such that, denoting $\pi$ the orthogonal projection onto $w^{\perp}$, one has $\pi(y) \in int(\pi(K))$.
%\end{lem}
%\begin{proof}
%One can find $w\in \sigma(y)$ and that for all $u\in \sigma(y)$, $\langle w,u\rangle >0$. Denote $\pi:=\pi_{w^{\perp}}$. Assume for contradiction that $\pi(y) \in \partial \pi(K)$. Then one could find $v\perp w$, such that $h_{\pi(K)}(v)=\langle \pi(y),v\rangle$. And so $h_K(v)=h_{\pi(K)}(v)=\langle \pi(y),v\rangle=\langle y, v\rangle$ (first and last are due to $v\perp w$), i.e. $v\in \sigma(y)$, and so $\langle w, v \rangle >0$, a contradiction.
%\end{proof}

\begin{proof}[Proof of Theorem \ref{positivecurv}]
Let $u_0 \in \Omega_0$, and let $y\in K$ such that $K^{u_0}=\{y\}$. Then, from Lemma~\ref{lem:folklore} there exists $u_1\in \sigma(y)$ such that $\langle u_1, u\rangle >0$ for all $u\in \sigma(y)$.

%$K_{\epsilon}=K \cap H^-(u_0, h_K(u_0)-\epsilon):=K \cap \{x\in \R^n: \langle x, u_0 \rangle \leq  \langle y, u_0 \rangle -\epsilon \}$.
For $\epsilon>0$, denote $K_{\epsilon}=K \cap \{x\in \R^n: \langle x, u_0 \rangle \leq  \langle y, u_0 \rangle -\epsilon \}$. Note that, if $\epsilon>0$ is small enough, then $\pi(K)=\pi(K_{\epsilon})$, where $\pi=\pi_{u_1^{\perp}}$ denotes the orthogonal projection onto $u_1^{\perp}$.

Indeed, if not, one could find a sequence $z_{\epsilon} \in K \setminus K_{\epsilon}$, such that $\pi(z_{\epsilon}) \notin \pi(K_{\epsilon})$, and moreover, $z_{\epsilon}$ is maximal in some direction $w_{\epsilon} \in \mathbb{S}^{n-1} \cap u_1^{\perp}$: $h_K(w_{\epsilon})=h_{\pi(K)}(w_{\epsilon})=\langle w_{\epsilon}, z_{\epsilon} \rangle=\langle w_{\epsilon}, \pi(z_{\epsilon})\rangle$. Passing to a subsequence, one can can assume $w_{\epsilon} \to w \in u_1^{\perp} \cap \mathbb{S}^{n-1}$, and $z_{\epsilon} \to z\in \partial K$. But, since $K^{u_0}=\{y\}$, and $\langle z_{\epsilon} ,u_0 \rangle > h_K(u_0)-\epsilon$, we have $z=y$, ie $\lim z_{\epsilon}=y$.
By continuity, one deduces $y\in K^w$, in other words $w\in \sigma(y)$, which contradicts the fact that $\langle u_1,v \rangle >0$ for all $v\in \sigma(y)$.

Next, fix $\epsilon>0$ sufficiently small, so that $\pi(K)=\pi(K_{\epsilon})$. Then (using \eqref{eqn:proj1}), $V([0,u_1], K[n-1])=V(K_{\epsilon}, [0,u_1], K[n-2])=\frac{1}{n}\Vol_{n-1}(\pi(K))$, so that to establish 
$$
V(K_{\epsilon}, [0,u_1], K[n-2]) V(K) > V(K_{\epsilon},K[n-1]) V([0,u_1], K[n-1]),
$$
it is sufficient to prove that $V(K)>V(K_{\epsilon}, K[n-1])$. This inequality follows from the integral formula \eqref{eqn:integralformula}, and from the assumption $u_0 \in supp(S_K)$. Indeed,  $h_{K_{\epsilon}}(u_0)=h_K(u_0)-\epsilon$ (by definition of $K_{\epsilon}$), and so by continuity of $h_{K_{\epsilon}}-h_K$, there exists some $\delta>0$ such that $(h_K-h_{K_{\epsilon}})(u)>\frac{\epsilon}{2}$ if $u\in U(u_0, \delta)=\{u\in \mathbb{S}^{n-1}: \text{dist}(u,u_0)\leq \delta\}$. Moreover $(h_K-h_{K_{\epsilon}})(u)\geq 0$ for all $u\in \mathbb{S}^{n-1}$. Hence, by non-negativity of the measure $S_K$, one finds:
$$
V(K)-V(K_{\epsilon},K[n-1])=\frac{1}{n} \int (h_K-h_{K_{\epsilon}}) dS_K  \geq \frac{\epsilon}{2n} S_K\left(U(u_0, \delta)\right) >0,
$$
where positivity is due to the assumption $u_0 \in supp(S_K)$.
%
%
%$$V(K_{\epsilon}, [0,u_1], K[n-2]) V(K) > V(K_{\epsilon},K[n-1]) V([0,u_1], K[n-1]).$$
%
%Since $n V([0,u_1], K[n-1])=nV(K_{\epsilon}, [0,u_1], K[n-2])=\Vol_{n-1}(\pi(K))$, the latter inequality is equivalent with 
%$V(K) >V(K_{\epsilon},K[n-1])$, which follows from the fact that $u_0 \in supp(S_K)$.
%Indeed, $V(K)-V(K_{\epsilon},K[n-1])=\frac{1}{n} \int (h_K-h_{K_{\epsilon}}) dS_K$.
%
%By definition $h_{K_{\epsilon}}(u_0)=h_K(u_0)-\epsilon$, hence by continuity, if $\delta>0$ is small enough then one has $(h_K-h_{K_{\epsilon}})(u)>\frac{\epsilon}{2}$ for any $u\in U(u_0,\delta)=\{v\in \mathbb{S}^{n-1}: ||v-u_0||<\delta\}$. Recalling that $S_K$ is non-negative, and since $h_K-h_{K_{\epsilon}}\geq 0$ everywhere:
%$$V(K)-V(K_{\epsilon},K[n-1]) \geq \frac{\epsilon}{2n} S_K(U(u_0, \delta) >0,$$
%where the positivity is due to $u_0 \in supp(S_K)$.
\end{proof}

The next statement is the main result of this paper. It is an excluding condition, which deals with $\Omega_{n-2}$ rather than with $\Omega_0$ or $\Omega_{n-1}$.

\begin{reptheorem} {thm:main}
Assume $S_K(\Omega_{n-2})>0$. Then $b_2(K)>1$.
\end{reptheorem}

 As an example: in $\R^3$, if $B=\pi_{e_3^{\perp}}(B_2^3)$ is the unit ball in $e_3^{\perp}$, and if $C=Conv(B, e_3)$ denotes the convex hull of $B$ and $e_3$, then $\Omega_1$ is a circle (lying on $\mathbb{S}^2$). Hence $b_2(C)>1$.

Before we jump to the proof of Theorem \ref{thm:main}, let us show how to use it to prove Corollary~\ref{cor:intro}.
\begin{proof}[Proof of Corollary~\ref{cor:intro}]
Consider $K\in\mathcal{K}^3$. Let $\Omega=\Omega_0 \cup \Omega_1 \cup \Omega_2$ be the support of the surface area measure $S_K$.
If $\Omega_0 \neq \emptyset$, then $b_2(K)>1,$ due to Theorem~\ref{positivecurv}. Thus, we may assume $\Omega_0=\emptyset$. If $\Omega_2$ is infinite, then $b_2(K)>1,$ due to Theorem~\ref{infinitecond}. Thus, we may assume $\Omega_2$ is a finite set. If $S_K(\Omega_1)>0$, then Theorem~\ref{thm:main} yields $b_2(K)>1$. Hence, we may assume $S_K(\Omega_1)=0$. Consequently, $S_K(\Omega)=S_K(\Omega_2)$ and $\Omega_2$ is a finite set, thus a closed set, so (by definition of the support of a measure), one gets $\Omega=\Omega_2$, which means that $K$ is a polytope.
From Theorem~\ref{thm:polytope}, we know the only $n$-polytopes satisfying $b_2(P)=1$ are simplices.
\end{proof}

Note that a straightforward adaptation of the above proof would show that $b_2(K)=1$ characterizes the $4$-simplex among $4$-dimensional convex bodies, if one could prove that $S_K(\Omega_{n-3})>0$ enforces $b_2(K)>1$. And similarly, if $S_K(\Omega_{n-k})=0$ was a necessary condition (for having $b_2(K)=1$), for $k=2,3,4$, then the above proof would yield that $\Delta_5$ is the only minimizer of $b_2$ in $\mathcal{K}_0^5$.

\vspace{2mm}
Let us mention that the above corollary already appeared in \cite{SSZ2}. Indeed, when proving that $b(K)=1$ characterizes the simplex, Saroglou, Soprunov and Zvavitch obtained a stronger characterization, namely that $b'(K)=1$ characterizes the simplex, where:
$$
b'(K)=\max_{L_1, ... , L_{n-1}} \frac{V(L_1, ... , L_{n-1},K) V(K)}{V(K,K, L_2, ... , L_{n-1})V(L_1, K[n-1])}.
$$
Since $b'(K)\leq b(K)$ for all $K$, and the inequality is sometimes strict (for the cube for instance), the condition $b'(K)=1$ is weaker (and so the characterization is stronger). Note that when $n=3$, then $b'(K)=b_2(K)$ (for all $K$), which is why Corollary~\ref{cor:intro} is a consequence of \cite[Theorem 1.1]{SSZ2}.

%Like the original proof of this characterization, our corollary relies (through the proof of theorem \ref{thm:main}) on Wulff-shape perturbations, chosen along a sequence of directions $u_k$ such that $K^{u_k}$ contains a segment with non-vanishing length. Hence the choice $L_k(t)=W(h_K+tf_k)$, with an ad hoc function $f_k\geq0$, for the first variable in the bilinear form $F(L,M)$, is similar to that of \cite{SSZ2}, but for the second variable we choose a square rather than a half-ball, yielding more simple arguments.

%The advantage to the approach here is that it was developed independently of Bezout type inequalities for more than $2$ bodies, and thus, if more excluding conditions can be shown, can be generalized to higher dimensions.

%Like $b(K)$, one could define $b'(K)$ has an infimum: $b'(K)=\inf \{b\geq 1: G'_{K,b}\geq 0\}$, where $G'_{K,b}(A_2, ... ,A_n)=bV(A_3, ... ,A_n, K,K)V(A_2,K[n-1])-V(A_2, ... ,A_n, K)V(K)$ is an $(n-1)$-linear form on $(\mathcal{K}^n)^{n-1}$. Note that $G'_{K,b}(*)=G_{K,b}(K,*)$, and hence $b'(K)\leq b(K)$ (with strict inequality for some $K$, for instance $K=[0,1]^n$). The proof of \cite[Theorem 1.1]{SSZ2} can be simplified to directly obtain Theorem \ref{thm:weakly} above, see Section \ref{sec:thm9}.
%The proof of Theorem~\ref{thm:main} borrows several ideas from the proof of \cite[Theorem 1.1]{SSZ2}; to show the similarities, we provide a direct proof of theorem \ref{thm:weakly}.

We now jump to the proof of Theorem~\ref{thm:main}.

\begin{proof}[Proof of Theorem~\ref{thm:main}]
Let $n\geq 3$. Assume $S_K(\Omega_{n-2})>0$. If $\Omega_0\neq \emptyset$, then by Theorem~\ref{positivecurv}, we know $b_2(K)>1$: hence we can assume $\Omega_0=\emptyset$. Similarly, we can assume $\Omega_{n-1}$ is a finite set.
$$\text{Define  }  \Omega_{\delta}:=  \{u\in \Omega_{n-2}: \Vol_{n-2}(K^u) \geq \delta \} , \hspace{2mm} \text{so that }\hspace{3mm} \Omega_{n-2}=\cup_{\delta>0} \Omega_{\delta}.
$$
Fix $\delta>0$ such that $S_K(\Omega_{\delta})>0$, and let $V_0=\Omega_{\delta}$. Note that $V_0=\cup_{\epsilon} \left( V_0 \setminus \Omega_{n-1}^{\epsilon}\right)$, where $A^{\epsilon}=\{u\in \mathbb{S}^{n-1}: dist(u,A)\leq \epsilon\}$, because $\Omega_{n-1}$ is finite.
Fix $\epsilon >0$ such that $S_K(V_0 \setminus \Omega_{n-1}^{\epsilon})>0$. Set $V:=V_0 \setminus \Omega_{n-1}^{\epsilon}$.

Let $\epsilon_0=\frac{1}{10} \epsilon $, and cover $V \subset \mathbb{S}^{n-1}$ with caps of radius $\epsilon_0$, centered at points in $V$. Among these caps, at least one, call it $U_0=U(z_0, \epsilon_0)$, is such that $S_K(V \cap U_0)>0$. Once $U_i$ is defined, cover $V$ with caps of radius $\epsilon_{i+1}=\frac{1}{4} \epsilon_i$. Among them, one may choose $U_{i+1}$, such that $U_{i+1}\cap U_i \neq \emptyset$, and moreover $S_K(V\cap U_{i+1})>0$. 

We claim that $S_K(2U_k)\to 0$, where we remind that $2U_k=U(z_k, 2\epsilon_k)$, if $U_k=U(z_k, \epsilon_k)$. 

Indeed, set $U=\bigcap_m \bigcup_{k\geq m} (2U_k)$. Since $\epsilon_{i+1}=\frac{1}{4}\epsilon_i$ and $U_{i+1}\cap U_i \neq \emptyset$, note that $2U_{i+1} \subset 2U_i$: in particular $U$ is a singleton, and if $U=\{u_0\}$, we have $u_0\in 2U_0$. By construction, $2U_0 \cap \Omega_{n-1}=\emptyset$, therefore $u_0\notin \Omega_{n-1}$, in other words $K^{u_0}$ is at most $(n-2)$-dimensional, and $S_K(U)=S_K(\{u_0\})=0$. Since $S_K(U)\geq \limsup_k S_K(2U_k)$, it readily follows that $S_K(2U_k)\to 0$.

%Assume by way of contradiction that $S_K(U_i \cap V)$ was not tending to $0$. Then, set $U=\cap_{m\geq 1} \cup_{k\geq m} U_k$, so that $S_K(U)\geq \limsup_k S_K(U_k) >0$.
%Since the set $U$ is a singleton, call $u_0 \in \mathbb{S}^{n-1}$ the point such that $U=\{u_0\}$. One would then have $S_K(\{u_0\})=S_K(U)>0$, which means $u_0\in \Omega_{n-1}$. But, by construction, $u_0 \in 2U_0$, if we may denote by $2U_0$ the set $U(z_0, 2\epsilon_0)$, where $U_0=U(z_0, \epsilon_0)$ with $z_0\in V$. Hence dist$(u_0, \Omega_{n-1}) \geq dist(z_0, \Omega_{n-1})- 2\epsilon_0 \geq 2 \epsilon_0 >0$: a contradiction.
%
%\noindent Therefore $S_K(U_i \cap V) \to 0$. (but $S_K(U_i \cap V) >0$ for each $i$)

Define $f_i: \mathbb{S}^{n-1} \to [0,1]$, continuous, such that $f_i=1$ on $U_i$, and $f_i=0$ outside $2U_i$ \footnote{if $U_k=U(z_k, \epsilon_k)$ is cap centered at $z_k$, then $2U_k:=U(z_k ,2\epsilon_k)$}. Define Wulff-shape perturbations of $K$ as follows: $L_i(t)=W(h_K+t f_i)$. According to Lemma~\ref{lem:pointwiseCV}, one has 
$$
\frac{h_{L_i(t)}(u)-h_K(u)}{t} \to 1 \hspace{3mm} \text{ for $S_K$-almost every $u\in U_i$}.
$$
Call $W_i:= \{u\in U_i: \frac{h_{L_i(t)}(u)-h_K(u)}{t} \to 1 , \text{ as $t\to 0$}\}$. Then almost sure convergence tells us that $S_K(W_i\cap V)=S_K(U_i \cap V)>0$, so that $W_i\cap V \neq \emptyset$. Fix $u_i \in W_i \cap V$. Because $u_i \in V$, we know that $K^{u_i}$ has dimension $(n-2)$ (and moreover that $\Vol_{n-2}(K^{u_i})\geq \delta$). Let $v_i\in \mathbb{S}^{n-1} \cap u_i^{\perp}$ be such that $K^{u_i} \subset v_i^{\perp}$ (it is uniquely determined, up to $\pm1$ sign). Set $M_i$ to be the square: $M_i=[0,u_i]+[0,v_i]$, so that $M_i^{u_i}$ is a translate of $[0,v_i]$.

The next claim finishes the proof.

\begin{claim}
Let $F(A,B)=V(A,B,K[n-2])V(K)-V(A,K[n-1])V(B,K[n-1])$, where $A,B \in \conbod$. Then, if $k>1$ is large enough, and if $t\in (0,t_k)$ is small enough, one has $F(L_k(t), M_k)>0$.
 \end{claim}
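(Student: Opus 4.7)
First note $\phi_k(0)=F(K,M_k)=V(M_k,K[n-1])V(K)-V(K)V(M_k,K[n-1])=0$, so the task is a quantitative positive lower bound on $\phi_k(t)$ for small $t>0$. Writing $\psi_t:=h_{L_k(t)}-h_K\ge 0$ and using the mixed-volume integral representation on both terms of $F$,
\[
\phi_k(t)=\frac{V(K)}{n}\int\psi_t\,dS(M_k,K[n-2],\cdot)-\frac{V(M_k,K[n-1])}{n}\int\psi_t\,dS_K.
\]
The key structural input is that $S(M_k,K[n-2],\cdot)$ has a large atom at $u_k$. Multilinearity gives $S(M_k,K[n-2],\cdot)=S([0,u_k],K[n-2],\cdot)+S([0,v_k],K[n-2],\cdot)$, while the segment-projection formula of Section~\ref{sec_notation} yields the identity $S([0,w],K[n-2],\cdot)=\frac{1}{n-1}S_{\pi_{w^\perp}(K)}$ as measures on $\Sph\cap w^\perp$. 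Since $K^{u_k}\subset v_k^\perp$ is an $(n-2)$-facet of $\pi_{v_k^\perp}(K)$ (viewed in $v_k^\perp$) with $(n-2)$-volume $\ge\delta$, the measure $S_{\pi_{v_k^\perp}(K)}$ has an atom of mass $\ge\delta$ at $u_k$, and hence
\[
\int\psi_t\,dS(M_k,K[n-2],\cdot)\ge\frac{\delta}{n-1}\,\psi_t(u_k).
\]

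For the other integral, the Wulff-shape inequality $\psi_t\le tf_k$ on $\Omega$ combined with $\mathrm{supp}(f_k)\subset 2U_k$ gives $\int\psi_t\,dS_K\le t\,S_K(2U_k)$, with $S_K(2U_k)\to 0$ by the construction of the caps. The remaining task is to prove $\psi_t(u_k)\ge t$ for $t$ below some threshold $t_k>0$. Pick $y_k\in\mathrm{relint}(K^{u_k})$; then $\psi_t(u_k)\ge t$ is equivalent to $y_k+tu_k\in L_k(t)=W(\Omega,h_K+tf_k)$, i.e., to
\[
t\bigl(\langle u_k,u\rangle-f_k(u)\bigr)\le d_K(y_k,u):=h_K(u)-\langle y_k,u\rangle,\qquad\forall u\in\Omega.
\]
On $U_k$ this is automatic (since $f_k=1$ and $\langle u_k,u\rangle\le 1$); elsewhere, setting $t_k$ to be the infimum of $d_K(y_k,u)/(\langle u_k,u\rangle-f_k(u))$ over $u\in\Omega$ with $\langle u_k,u\rangle-f_k(u)>0$, compactness and continuity of $d_K(y_k,\cdot)$ give $t_k>0$ provided $d_K(y_k,u)>0$ on $\Omega\setminus\{u_k\}$, i.e.\ provided the spherical normal cone $\sigma(y_k)=\sigma_{u_k}$ reduces to $\{u_k\}$.

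The \emph{main obstacle} is thus to verify $\sigma_{u_k}=\{u_k\}$, namely that the normal cone of the $(n-2)$-face $K^{u_k}$ degenerates on the sphere to a single point. I would derive this from the reductions already in force in the proof of Theorem~\ref{thm:main}: $\Omega_0=\emptyset$, $\Omega_{n-1}$ is finite with $u_k$ at distance $>\epsilon$ from $\Omega_{n-1}$, and $u_k\in\mathrm{supp}(S_K)$. Were $\sigma_{u_k}$ a non-degenerate 1-dim arc with $u_k$ in its relative interior, then for any sufficiently small cap $U\ni u_k$ the inverse spherical image $\tau(K,U)$ would consist of the constant face $K^{u_k}$ (of $\mathcal{H}^{n-1}$-measure $0$) plus, for $u$ off the arc, faces of dimension neither $0$ (as $\Omega_0=\emptyset$) nor $n-1$ (no nearby facets); a careful bookkeeping of these contributions forces $\mathcal{H}^{n-1}(\tau(K,U))=0$, i.e.\ $S_K(U)=0$, contradicting $u_k\in\mathrm{supp}(S_K)$. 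Granted $t_k>0$, combining the two bounds yields, for $k$ large enough (so that $S_K(2U_k)<\delta V(K)/((n-1)V(M_k,K[n-1]))$, feasible because $V(M_k,K[n-1])\le\frac{2}{n}\sup_w\Vol_{n-1}(\pi_{w^\perp}(K))$ is uniformly bounded in $k$) and $t\in(0,t_k]$,
\[
\phi_k(t)\ge\frac{V(K)\delta}{n(n-1)}\,t-\frac{V(M_k,K[n-1])}{n}\,t\,S_K(2U_k)>0,
\]
completing the proof of the Claim.
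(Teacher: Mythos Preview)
Your overall decomposition is exactly the paper's: write $F(L_k(t),M_k)$ as a difference of two integrals against $\psi_t=h_{L_k(t)}-h_K\ge 0$, lower-bound the first via the atom $S(M_k,K[n-2],\{u_k\})\ge\delta/(n-1)$, upper-bound the second by $\frac{t}{n}S_K(2U_k)\to 0$, and use the uniform bound $V(M_k,K[n-1])\le\frac{2}{n}C_K$. Your computation of the atom (via the projection identity $S([0,w],K[n-2],\cdot)=\frac{1}{n-1}S_{\pi_{w^\perp}(K)}$) is a valid alternative to the paper's direct evaluation $V_{n-1}(M_k^{u_k},K^{u_k}[n-2])$.

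The gap is in how you obtain $\psi_t(u_k)\gtrsim t$. You embark on a geometric argument requiring $\sigma(y_k)=\{u_k\}$, and your justification of this (``careful bookkeeping forces $\mathcal{H}^{n-1}(\tau(K,U))=0$'') is not a proof: the inverse spherical image over an $(n-1)$-dimensional cap of faces that are each at most $(n-2)$-dimensional can certainly have positive $\mathcal{H}^{n-1}$-measure, and you also do not treat the case where $u_k$ is an endpoint (rather than a relative-interior point) of the arc $\sigma(y_k)$. More importantly, this entire detour is unnecessary. In the setup preceding the Claim, $u_k$ was chosen in $W_k\cap V$, where by definition $W_k=\{u\in U_k:\psi_t(u)/t\to 1\}$ (this set has full $S_K$-measure in $U_k$ by Lemma~\ref{lem:pointwiseCV}). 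Hence $\psi_t(u_k)\sim t$ directly, so $\psi_t(u_k)\ge t/2$ for $t\in(0,t_k)$, and the proof concludes exactly as you wrote (with $\delta/(n-1)$ replaced by $\delta/(2(n-1))$). You simply overlooked that the required pointwise bound is already built into the choice of $u_k$.
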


\noindent To prove this claim, let us rewrite:
$$
F(A,B)=\left[ V(A,B,K[n-2])-V(K,B,K[n-2])\right] V(K) - \left[ V(A,K[n-1])-V(K)\right] V(B,K[n-1]) ,
$$
so that $F(L_k(t), M_k)=A_k(t) V(K) -B_k(t) V(M_k, K[n-1])$, where
$$
A_k(t)=V(L_k(t), M_k, K[n-2])-V(K, M_k, K[n-2]) \text{ and } B_k(t)=V(L_k(t), K[n-1])-V(K).
$$
Now, $B_k(t)=\frac{1}{n} \int (h_{L_k(t)}-h_K) dS_K \leq \frac{t}{n} \int f_k dS_K \leq \frac{t}{n} S_K(2U_k)$, since $supp(f_k)\subset 2U_k$, and $\max f=1$.
%
%Note that $S_K(2U_k)\to 0$. Otherwise, set $U=\cap_{l\geq 0} \cup_{k\geq l} (2U_k)$: we would have $S_K(U)>0$. But $U$ is a singleton, say $U=\{u_0\}$. Hence $S_K(U)>0$ says that $u_0\in \Omega_{n-1}$. But (since $\epsilon_{i+1}=\frac{1}{4} \epsilon_i$, and since $U_{i+1} \cap U_i \neq \emptyset$), we have $2U_1 \subset 3U_1 \subset 2U_0$, and similarly $2U_{i+1}\subset 3U_{i+1} \subset 2U_i \subset 2U_0$. So $u_0 \in 2U_0$, ie $dist(u_0, z_0)\leq 2\epsilon_0= \frac{1}{5} \epsilon$ (we remind $z_0 \in V$ denotes the center of the cap $U_0$). Hence $dist(u_0, \Omega_{n-1}) \geq dist(z_0, \Omega_{n-1})- \frac{1}{5} \epsilon \geq \frac{1}{2} \epsilon >0$, and $u_0 \notin \Omega_{n-1}$: a contradiction.

Meanwhile, $V(M_k, K[n-1])=V([0,u_k], K[n-1])+V([0,v_k], K[n-1])$, and so, setting $C_K=\max_{u\in \mathbb{S}^{n-1}} \Vol_{n-1}\left(\pi_{u^{\perp}}(K) \right)$, one gets
 $V(M_k, K[n-1]) \leq \frac{2}{n} \max_{u\in \mathbb{S}^{n-1}} \Vol_{n-1}\left(\pi_{u^{\perp}}(K) \right)= \frac{2}{n} C_K$.
 
On the other hand, let us denote $\sigma=S(M_k, K[n-2], .)$ the mixed surface area measure between $M_k$ and $K$, so that:
$$
A_k(t)=V(L_k(t), M_k, K[n-2])-V(K, M_k, K[n-2]) =\frac{1}{n} \int (h_{L_k(t)}-h_K)(u) d\sigma(u) .
$$
Then $A_k(t) \geq \frac{1}{n} (h_{L_k(t)}-h_K)(u_k) S(M_k, K[n-2], u_k)$, but note that
$$
S(M_k, K[n-2],u_k)=V_{n-1}(M_k^{u_k}, K^{u_k}[n-2])=V_{n-1}([0,v_k], K^{u_k}[n-2])=\frac{1}{n-1} \Vol_{n-2}(\pi_{v_k^{\perp}}(K^{u_k})).
$$
$$ \text{Hence, } S(M_k, K[n-2],u_k) =\frac{1}{n-1} \Vol_{n-2}(\pi_{v_k^{\perp}}(K^{u_k}))=\frac{1}{n-1} \Vol_{n-2}(K^{u_k}) \geq \frac{\delta}{n-1}$$
where the second equality is by choice of $v_k$, and where the inequality results from $u_k \in V \subset \Omega_{\delta}$.

Since $u_k$ was also chosen to be in $W_k$, note that $(h_{L_k(t)}-h_K)(u_k) \sim t$ as $t\to 0$. Therefore, if $t$ is small enough (say $t\in (0,t_k)$), one has:
$A_k(t) \geq \frac{\delta}{2n (n-1)} t$.

Putting the inequalities together yields that for any $k$, and for any $t\in (0,t_k)$:
$$F(L_k(t),M_k) \geq \left( \frac{\delta}{2n (n-1)} V(K) - \frac{2}{n^2} C_K S_K(2U_k) \right) t .$$
Since $S_K(2U_k)\to 0$, the claim follows.
\end{proof}

\subsection{Characterization of $\Delta$ as the only minimizer of $b(K)$}
\label{sec:thm9}
This section is devoted to the following theorem.
\begin{reptheorem}{thm:b(K)}{\cite{SSZ2}}
%\label{thm:b(K)}
Let $K$ be a convex body such $b(K)=1$. Then, $K$ is an $n$-simplex.
\end{reptheorem}

Although this theorem is a corollary of \cite[Theorem 1.1]{SSZ2}, we provide here a more simple proof, following the same notations as in previous sections.  The proofs of Theorems \ref{thm:b(K)} and \ref{thm:main} are indeed very similar.
\begin{proof}
Assume $K$ is not an $n$-simplex. Thanks to Theorem~\ref{infinitecond}, we may assume $\Omega_{n-1}$ is a finite set. If $K$ is a polytope, we already know that $b(K)\geq b_2(K)>1$. Thus we may assume $\Omega \neq \Omega_{n-1}$.  If there are regular points on the boundary of $K$, namely if $\Omega_0\neq \emptyset$, we know that $b(K)\geq b_2(K) >1$: we may assume $\Omega_0=\emptyset$.

%Since $\Omega_0=\emptyset$, all $u\in \Omega$ are such that $K^u$ contains some (positive-lengthed) segment, and therefore: $\Omega=\cup_{\delta>0} \{u\in \Omega: \exists y,z \in K^u, ||y-z||>\delta\}=: \cup_{\delta>0} V_{\delta}$. Hence $S_K(V_{\delta})>0$ if $\delta$ is small enough. We fix such $\delta>0$.

If $\Omega_{n-1}$ is empty (no facets), we set $V:=\Omega$. Else, note that $\Omega \setminus \Omega_{n-1}=\cup_{\epsilon>0} \left(\Omega \setminus \Omega_{n-1}^{\epsilon} \right)$, using that $\Omega_{n-1}$ is finite. Therefore $S_K(\Omega)=\lim_{\epsilon} S_K\left(\Omega \setminus \Omega_{n-1}^{\epsilon} \right)$, and thus $S_K\left(\Omega \setminus \Omega_{n-1}^{\epsilon} \right) >0$ when $\epsilon$ is small enough. We fix such an $\epsilon$, and set $V:=\Omega \setminus \Omega_{n-1}^{\epsilon}$.

One may cover $V\subset \mathbb{S}^{n-1}$ with caps of radius $\epsilon_0=\frac{1}{3}{\epsilon}$, centered at points in $V$: among these caps, one may choose one, call it $U_0=U(z_0,\epsilon_0)$ (with $z_0\in V$), such that $S_K(V\cap U_0)>0$. One may cover $U_0$ with caps of radius $\epsilon_1=\frac{1}{4} \epsilon_0$: among these, choose $U_1$ such that $S_K(U_1 \cap V)>0$. And so on. This defines a sequence of caps $(U_k)$, which satisfies $U_k \cap U_{k+1}\neq \emptyset$, and $2U_{k+1} \subset 2U_k \subset 2U_0$. Let $u_0$ be the limitpoint of the sequence: $\{u_0\}=\bigcap_m \bigcup_{k\geq m} (2U_k)$. Then $u_0\in 2U_0=U(z_0, 2\epsilon_0)$, and so $\text{dist}(u_0,\Omega_{n-1})\geq \text{dist}(z_0,\Omega_{n-1})-2\epsilon_0 \geq \epsilon_0>0$, so that $u_0 \notin \Omega_{n-1}$ and hence $S_K(\{u_0\})=0$.

It follows that $S_K(2U_k)\to 0$.

%with $U_{k+1} \subset \frac{3}{2} U_k \subset 2 U_0$ (where $\alpha U(x,\delta):=U(x, \alpha \delta)$), all satisfying $S_K(V \cap U_k)>0$, and all disjoint from $\Omega_{n-1}$, by construction.

Then define a sequence of continuous functions as follows: let $f_0: \mathbb{S}^{n-1} \to [0,1]$ be supported on $2U_0$ (recall that if $U_0=U(u_0, \delta)$, then $2U_0:=U(x,2\delta)$), such that $f_0=1$ on $U_0$. Similarly let $f_i: \mathbb{S}^{n-1} \to [0,1]$ be supported on $2U_i$, such that $f_i=1$ on $U_i$. By construction $2U_i \cap \Omega_{n-1}=\emptyset$ for all $i$. Then define the Wulff-shape perturbations of $K$: $L_k(t)=W(h_K+tf_k)$.

Thanks to Lemma~\ref{lem:pointwiseCV}, we know that $\frac{h_{L_k(t)}(u)-h_K(u)}{t} \to 1$ for $S_K$-almost every $u\in U_k$. Call $W_k\subset U_k$, the set of vectors $u$ such that this convergence holds. Then $S_K(V \cap W_k)=S_K(V\cap U_k) >0$, proving that $V \cap W_k$ is non-empty, for each $k$. Fix $u_k \in V\cap W_k$.
%Hence $u_k \in V_{\delta}$, and $h_{L_k(t)}(u_k)-h_K(u_k) \sim t$ as $t \to 0$.

Define $M_k=B_2^n \cap H_k^-$ where $H_k^-=\{x\in \R^n: \langle x, u_k \rangle \leq 0\}$. Hence $M_k^{u_k} \approx B_2^{n-1}$ is the unit ball in $u_k^{\perp}$. The next claim finishes the proof.
\begin{claim}
If $k>1$ is large enough, and if $0<t<t_k$ is small enough, then:
$$V(L_k(t), M_k[n-1])V(K) > V(K, M_k[n-1]) V(L_k(t), K[n-1]).$$
\end{claim}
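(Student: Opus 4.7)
Set
$$F_k(t) := V(L_k(t), M_k[n-1])\,V(K) - V(K, M_k[n-1])\,V(L_k(t), K[n-1])$$
and decompose
$$F_k(t) = \bigl[V(L_k(t), M_k[n-1]) - V(K, M_k[n-1])\bigr] V(K) \;-\; V(K, M_k[n-1]) \bigl[V(L_k(t), K[n-1]) - V(K)\bigr].$$
I will show that the first bracket is $\geq c\,t$ with $c = c(K,n) > 0$ independent of $k$, while the second bracket is $O\bigl(t\,S_K(2U_k)\bigr)$; since $S_K(2U_k) \to 0$ by the construction of the caps $U_k$, this forces $F_k(t) > 0$ for $k$ large and $t \in (0, t_k)$ small, which is exactly the claim.

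\textbf{Key step (first bracket).} The half-ball $M_k = B_2^n \cap H_k^-$ has surface area measure $S_{M_k} = \kappa_{n-1}\,\delta_{u_k} + \mathcal{H}^{n-1}|_{H_k^- \cap \Sph}$: an atom of fixed mass $\kappa_{n-1}$ at $u_k$ coming from the flat facet $M_k^{u_k} = B_2^n \cap u_k^{\perp}$, plus spherical Lebesgue measure on the lower hemisphere. For $k$ large, the cap $2U_k$ lies in a small neighborhood of $u_k$ of spherical radius $<\pi/2$, hence is disjoint from the lower hemisphere. Moreover $L_k(t) = W(h_K + tf_k) \supset K$ (since $h_K + tf_k \geq h_K$), so the sandwich $h_K \leq h_{L_k(t)} \leq h_K + tf_k$ forces $h_{L_k(t)} = h_K$ wherever $f_k = 0$, and in particular on $\mathrm{supp}(S_{M_k}) \setminus \{u_k\}$. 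Applying \eqref{eqn:integralformula} with $M_k$ in place of $K$, the continuous part of the integral cancels and only the atom survives:
$$V(L_k(t), M_k[n-1]) - V(K, M_k[n-1]) = \frac{\kappa_{n-1}}{n}\bigl(h_{L_k(t)}(u_k) - h_K(u_k)\bigr) \geq \frac{\kappa_{n-1}}{2n}\,t,$$
where the lower bound holds for $0<t<t_k$ by $u_k \in W_k$ and Lemma~\ref{lem:pointwiseCV}.

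\textbf{Second bracket and conclusion.} From $0 \leq h_{L_k(t)} - h_K \leq tf_k$ and \eqref{eqn:integralformula},
$$V(L_k(t), K[n-1]) - V(K) = \frac{1}{n}\int (h_{L_k(t)} - h_K)\,dS_K \leq \frac{t}{n}\int f_k\,dS_K \leq \frac{t}{n}\,S_K(2U_k),$$
while monotonicity of mixed volume together with $M_k \subset B_2^n$ gives $V(K, M_k[n-1]) \leq V(K, B_2^n[n-1]) =: C$ uniformly in $k$. Combining,
$$F_k(t) \geq \frac{\kappa_{n-1}\,V(K)}{2n}\,t - \frac{C}{n}\,S_K(2U_k)\,t,$$
which is $> 0$ as soon as $k$ is large enough that $C\,S_K(2U_k) < \kappa_{n-1}V(K)/4$. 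The main conceptual obstacle is recognizing why $M_k$ is the right test body: it is chosen precisely so that $S_{M_k}$ concentrates a fixed mass $\kappa_{n-1}$ at the direction $u_k$ where the perturbation $f_k$ is ``active,'' while the remainder of $\mathrm{supp}(S_{M_k})$ sits in a hemisphere where $f_k$ is invisible. This decouples the useful linear-in-$t$ contribution from the $k$-dependent error, and thereby mirrors the role of the square $M_k = [0,u_k]+[0,v_k]$ in the proof of Theorem~\ref{thm:main}.
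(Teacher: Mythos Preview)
Your proof is correct and follows essentially the same route as the paper's: the same decomposition of $F_k(t)$ into $A_k(t)V(K)-B_k(t)V(K,M_k[n-1])$, the same exploitation of the structure of $S_{M_k}$ (atom at $u_k$ plus hemisphere away from $\operatorname{supp} f_k$) to reduce $A_k(t)$ to $\tfrac{\kappa_{n-1}}{n}(h_{L_k(t)}(u_k)-h_K(u_k))$, and the same bounds $B_k(t)\le \tfrac{t}{n}S_K(2U_k)$ and $V(K,M_k[n-1])\le V(K,B_2^n[n-1])$. Your explicit sandwich $h_K\le h_{L_k(t)}\le h_K+tf_k$ (from $K\subset L_k(t)$) is a slightly cleaner justification of $h_{L_k(t)}=h_K$ off $2U_k$ than the paper's, and your remark that $2U_k$ misses the lower hemisphere only \emph{for $k$ large} is the right level of care.
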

Let $F(A,B)=V(A,B[n-1])V(K)-V(A, K[n-1])V(K, B[n-1])$, so that we aim at showing that $F(L_k(t), M_k)>0$ when $k$ is large enough, and for small enough $t$.

Rewrite $F(L_k(t), M_k)=A_k(t) V(K) - B_k(t) V(K, M_k[n-1])$, where
$A_k(t)=V(L_k(t),M_k[n-1])-V(K,M_k[n-1]))$ and $B_k(t)=V(L_k(t), K[n-1])-V(K)$.
Recall that $L_k(t)$ being a Wulff-shape, we have $h_{L_k(t)}\leq h_K +t f$ (for all $u\in \mathbb{S}^{n-1}$). Therefore
$$
B_k(t)\leq \frac{t}{n} \int f_k(u) dS_K(u) \leq \frac{t}{n} S_K(2U_k) \text{ since $f_k\leq 1$ and $supp(f_k)\subset 2U_k$.}
$$
%Also, $S_K(2U_k) \to 0$, otherwise one would have $S_K(\cap_m \cup_{k\geq m} 2U_k) >0$, which would mean there exists $u_0\in 2U_0$ such that $S_K(\{u_0\})>0$, in other words $u_0 \in 2U_0 \cap \Omega_{n-1}$, which cannot be, by construction of $V$.

Denote $w(K)$ the mean-width of $K$ : one may define the mean-width by $w(K)=\int_{\Sph} h_K(u) du$, where $h_K$ denotes the support function of $K$, and where $du$ denotes the Haar measure on the sphere. Then, $V(K, M_k[n-1]) \leq V(K, B_2^n[n-1])=\kappa_n w(K)$ is upper-bounded by a constant (since $K$ is fixed). Now denote $S_{M_k}$, the surface area measure of the half-ball $M_k$. Recall that $S_{M_k}(\{u_k\})=\kappa_{n-1}$, since $M_k^{u_k} \approx B_2^{n-1}$. For the sake of brievity, let us write $L$ for $L_k(t)$ in what follows.
\begin{equation}
%A_k(t)=\frac{1}{n} \int (h_L-h_K)(u) dS_{M_k}(u) = \frac{1}{n} (h_L-h_K)(u_k) S_{M_k}(u_k)=\frac{\kappa_{n-1}}{n} (h_{L_k(t)}(u_k)-h_K(u_k)).
\label{eqn:final}
A_k(t)=\frac{1}{n} \int (h_L-h_K)(u) dS_{M_k}(u) = \frac{1}{n} (h_L-h_K)(u_k) S_{M_k}(u_k)=\frac{\kappa_{n-1}}{n} (h_{L}(u_k)-h_K(u_k)).
\end{equation}
The second equality comes from the choice of $f_k$, which yields that, for any $t>0$, one has  $h_{L_k(t)}(v)=h_K(v)$ for all $v$ outside $2U_k$, in particular for all $v \in \mathbb{S}^{n-1} \cap H_k^-$, and from the fact that, $M_k$ being a half-ball, $supp(S_{M_k})=\{u_k\} \cup \left(   \mathbb{S}^{n-1} \cap H_k^-\right)$.

\noindent Set $c_n=\frac{\kappa_{n-1}}{2n}$. Then
since $u_k \in W_k$, the equality \eqref{eqn:final} yields $A_k(t)\sim 2 c_n t$. In particular, for any small enough $t$, say $t\in (0,t_k)$, we have $A_k(t) \geq c_n t$. It results from this lower bound and from the previous two upper bounds, and from $S_K(2U_k) \to 0$, that for any large enough $k$, and any $t\in (0,t_k)$:
\begin{equation}
  F(L_k(t), M_k)=A_k(t) V(K)-B_k(t)V(K,M_k[n-1]) \geq \left( c_n V(K)- \frac{\kappa_n w(K)}{n} S_K(2U_k) \right) t >0.
\end{equation}
%$$ \frac{n^2}{w(K)} F(L_k(t), M_k) \geq \left( \frac{n\kappa_{n-1}}{2} \frac{ V(K)}{w(K)} - S_K(2U_k) \right) t \geq  \frac{n\kappa_{n-1}}{4} \frac{ V(K)}{w(K)} t >0.$$
\end{proof}

%$S_K(2U_k) \xrightarrow[k\to \infty]{} 0$

%By Blaschke selection argument, this allows to approach $b(K)=\sup \frac{V(L_1, ... , L_n) V(K)}{V(L_1, K[n-1]) V(L_2, ... , L_n , K)}$ through the ratio given by a converging sequence of $n$-tuples.

\section{Appendix and remarks}
\label{sec:appendix}
In this section we give a short proof of Lemma~\ref{lem:rectangle}, which was needed to establish that $b_2(C)=\frac{n}{n-1}$, where $C=[0,1]^n$. We wish to thank Eli Putterman for  this nice proof. Then, we explain why the simplex is the only $n$-polytope which is not weakly decomposable, which implies that Theorem~\ref{thm:polytope} can be seen as a consequence of Theorem~\ref{thm:weakly}.

When $L$ is a segment, we denote by $|L|$ its length.
\begin{lem}
\label{lem:rectangle}
Let $A, B$ be two convex bodies in $\R^2$. Let $\pi_i(D)$ be the orthogonal projection of $D$ onto the axis $\R e_i$ ($i=1,2$). Then $V_2(A,B) \leq \frac{1}{2} \left( |\pi_1(A)| | \pi_2(B)|+|\pi_2(A)| |\pi_1(B)| \right)$.
\end{lem}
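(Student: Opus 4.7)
The plan is to reduce to the case of axis-aligned rectangles and exploit monotonicity of the mixed volume. Given $A \in \mathcal{K}^2$, let $R_A := \pi_1(A) \times \pi_2(A)$ be the smallest axis-aligned bounding rectangle of $A$ (viewing each projection as a segment in $\R^2$), and similarly let $R_B$ be the bounding rectangle of $B$. Then $A \subset R_A$ and $B \subset R_B$, so by monotonicity of the mixed volume one has $V_2(A,B) \leq V_2(R_A, R_B)$.

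It therefore suffices to compute $V_2(R_A, R_B)$ and see that it equals the right hand side. Writing $R_A = I_{A,1} + I_{A,2}$ and $R_B = I_{B,1} + I_{B,2}$, where $I_{D,i}$ denotes the projection $\pi_i(D)$ thought of as a segment in $\R^2$ (lying along $\R e_i$), multilinearity of mixed volume gives
\begin{equation*}
V_2(R_A, R_B) = \sum_{i,j \in \{1,2\}} V_2(I_{A,i}, I_{B,j}).
\end{equation*}
The two diagonal terms $V_2(I_{A,1}, I_{B,1})$ and $V_2(I_{A,2}, I_{B,2})$ vanish, since in each case both segments are collinear and their Minkowski sum has zero area. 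The two cross terms are mixed volumes of orthogonal segments: by formula (\ref{eqn:proj2}) (or by direct computation, as each Minkowski sum $I_{A,i}+I_{B,j}$ is a rectangle), $V_2(I_{A,1}, I_{B,2}) = \tfrac{1}{2}|\pi_1(A)||\pi_2(B)|$ and $V_2(I_{A,2}, I_{B,1}) = \tfrac{1}{2}|\pi_2(A)||\pi_1(B)|$. Summing these yields the desired bound.

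There is no real obstacle here; the only thing worth double-checking is the computation of $V_2$ of two orthogonal segments, which one can do either by invoking the projection formula already recorded in Section~\ref{sec_notation} or by expanding $\Vol_2(sI+tJ)$ for orthogonal segments $I, J$ and reading off the coefficient of $st$.
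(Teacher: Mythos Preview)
Your proof is correct and follows essentially the same approach as the paper's: enclose $A$ and $B$ in their axis-aligned bounding rectangles, apply monotonicity, and then expand $V_2(R_A,R_B)$ by multilinearity, noting that the parallel-segment terms vanish and the orthogonal-segment terms give exactly the claimed sum.
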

\begin{proof}
Let $L_1(A):= \pi_1(A)$, and $L_2(A):=\pi_2(A)$ be the projections of $A$ onto $\R e_1$, and onto $\R e_2$, respectively.
Similarly, denote $L_i(B)$ the projection of $B$ onto $\R e_i$, $i=1,2$.

Note that $A\subset R_A:=L_1(A)+L_2(A)$, and similarly $B\subset R_B:=L_1(B)+L_2(B)$. By monotonicity of mixed volume, one has $V_2(A,B) \leq V_2(R_A, R_B)$.

Since $R_A$ and $R_B$ are rectangles, the mixed volume  $V_2(R_A, R_B)$ is easily computed:
\begin{align*} 
V_2(R_A, R_B)&=V_2(L_1(A)+L_2(A) , L_1(B)+L_2(B))=V_2(L_1(A), L_2(B))+V_2(L_2(A), L_1(B))
\\
&=\frac{1}{2} \left( |\pi_1(A)| | \pi_2(B)|+|\pi_2(A)| |\pi_1(B)| \right).
\end{align*}
%$$V_2(R_A, R_B)=V_2(L_1(A)+L_2(A) , L_1(B)+L_2(B))=V_2(L_1(A), L_2(B))+V_2(L_2(A), L_1(B))=\frac{1}{2}(b_1-a_1)(d_2-c_2)+\frac{1}{2}(d_1-c_1)(b_2-a_2)= \frac{1}{2} \left( |\pi_1(A)| | \pi_2(B)|+|\pi_2(A)| |\pi_1(B)| \right).
%$$
\end{proof}

\begin{lem}
\label{lem:polyisweaklynotsimplex}
Let $P \subset \R^n$ be a full-dimensional polytope. Then $P$ is weakly decomposable, unless $P$ is an $n$-simplex.
\end{lem}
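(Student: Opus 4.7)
The plan is to reuse the facet-perturbation $L:=P_{i,t}$ from the proof of Theorem~\ref{thm:polytope}, together with the multilinear expansion of $S_{P+L}$ in terms of mixed surface area measures and Fact~\ref{cor:supportpolytope}.

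I would write $P=\bigcap_{j=1}^{N} H^-(u_j,h_j)$ with distinct outer normals $u_j\in\Sph$. Since $P$ is not an $n$-simplex it has at least $n+2$ vertices, and the combinatorial observation used at the end of the proof of Theorem~\ref{thm:polytope} furnishes an index $i$ such that at least two distinct vertices $v_0,v_1$ of $P$ lie strictly outside the facet $F_i=P^{u_i}$. I would then set $L:=P_{i,t}$ for a small $t>0$.

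First I would check that $L$ is not homothetic to $P$. Because $\langle v_j,u_i\rangle < h_i < h_i+t$ for $j=0,1$ and because the defining half-spaces of $L$ indexed by $k\neq i$ are unchanged, each $v_j$ remains a vertex of $L$ with unchanged coordinates. On the other hand $L\supsetneq P$ forces any putative homothety $L=x+\lambda P$ to satisfy $\lambda>1$; equating the two fixed vertices then yields $v_j = x + \lambda v_j$ for $j=0,1$, i.e.\ $v_0 = v_1 = x/(1-\lambda)$, contradicting $v_0\neq v_1$. (In the non-generic case where $P$ is non-simple along $F_i$, the splitting of vertices on $F_i$ yields $|V(L)|>|V(P)|$, which already forbids homothety.)

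Next I would establish $\mathrm{supp}(S_{P+L})\subseteq \mathrm{supp}(S_P)$ by the multilinearity of mixed surface area measures:
$$
S_{P+L}=\sum_{r=0}^{n-1}\binom{n-1}{r}\,S\bigl(P[n-1-r],\,L[r],\,\cdot\bigr).
$$
Each summand is a non-negative Borel measure on $\Sph$, and by Fact~\ref{cor:supportpolytope} applied to $P$ and its perturbation $L=P_{i,t}$, for $|t|$ small enough every one of these measures has support exactly $E(P)=\mathrm{supp}(S_P)$. Taking unions of supports then yields $\mathrm{supp}(S_{P+L})=\mathrm{supp}(S_P)$, so $L$ witnesses weak decomposability of $P$.

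The only genuinely delicate step is the non-homothety argument together with the combinatorial selection of $F_i$; both are already carried out at the end of the proof of Theorem~\ref{thm:polytope}, and the present lemma essentially packages that material with the multilinear expansion of $S_{P+L}$ and Fact~\ref{cor:supportpolytope}.
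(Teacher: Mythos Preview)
Your approach is the same as the paper's: take $L=P_{i,t}$ for a facet $F_i$ with at least two vertices off it, obtain $\mathrm{supp}(S_{P+L})=\mathrm{supp}(S_P)$ from Fact~\ref{cor:supportpolytope} (you are in fact more explicit about this step than the paper), and rule out homothety via the two unmoved vertices.

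One small gap: the line ``equating the two fixed vertices then yields $v_j=x+\lambda v_j$'' presumes that the homothety sends the vertex $v_j$ of $P$ to the vertex $v_j$ of $L$, which is not automatic---knowing $v_j$ is a vertex of $L=x+\lambda P$ only gives $v_j=x+\lambda w$ for \emph{some} vertex $w$ of $P$. The fix is short: since $v_j\notin F_i$, the normal cone of $P$ at $v_j$ is generated by outer normals $u_k$ with $k\neq i$, and because those half-spaces are unperturbed the normal cone of $L$ at $v_j$ is the same cone; a homothety preserves normal fans, so the vertex of $L$ carrying that cone must be $x+\lambda v_j$, whence $v_j=x+\lambda v_j$. (Equivalently: $h_L(u_k)=h_P(u_k)$ for all $k\neq i$ gives $\langle x,u_k\rangle=(1-\lambda)h_k$; since $v_j$ lies on $n$ linearly independent facet hyperplanes with indices $\neq i$, this forces $x=(1-\lambda)v_j$.) The paper's own version, which tracks the edge $[v_0,v_1]$ and its length, relies on the same implicit identification.

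One omission: the paper also proves the converse---that an $n$-simplex $\Delta$ is \emph{not} weakly decomposable---by observing that $\mathrm{supp}(S_{K+\Delta})\subset\mathrm{supp}(S_\Delta)$ forces $\mathrm{supp}(S_K)\subset\{u_1,\dots,u_{n+1}\}$, hence $K$ is a simplex homothetic to $\Delta$. This is what justifies the word ``only'' in Remark~2 preceding Theorem~\ref{thm:weakly}; your write-up addresses only the forward implication.
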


\begin{proof}
Let $P$ be a polytope with $n+2$ vertices or more. Then there exists a facet $F$, such that at least two vertices, call them $x$ and $y$, lie outside $F$. Without loss of generality, we can assume $x$ and $y$ are neighbors, i.e. that $[x,y]$ is an edge of $P$. Let $u_1$ be the outer normal vector to $F$. Assume $P$ has $N$ facets in total and $P=\cap_{j=1}^N H^-(u_j, h_j),$ where we recall the notation: $H^-(u,b)=\{x: \langle x,u \rangle \leq b\}$.

Recall that for any $t>0$, polytopes $P_{1,t}$ were defined by $P_{1,t}=H^-(u_1, h_1+t) \cap \left( \bigcap_{j>1} H^-(u_j, h_j) \right)$, and that for small $t$, $P_{1,t}$ has same outer normal vectors as $P$, and additionally its support vector is given by: $h_{P_{1,t}}(u_j)=h_j+t\delta_{1,j}$, $j\leq N$.

\noindent Therefore $V(P[n-1],P_{1,t})=V(P)+\frac{t}{n} \Vol_{n-1}(P^{u_1})=V(P)+\frac{t}{n} \Vol_{n-1}(F)$. If (for contradiction) $P_{1,t}$ was homothetic to $P$, then there would be $x\in \R^n$, such that $P_{1,t}=x+\lambda P$, with (necessarily) $\lambda=V(P[n-1],P_{1,t})/V(P)=1+ \frac{t}{n} \frac{\Vol_{n-1}(F)}{\Vol_{n}(P)} >1 $ ( where $\lambda>1$ if $t>0$).

While a vertex $v\in F$ is moved to a nearby vertex $v_t \in F_t =P_{1,t}^{u_1}$, it follows from the definition of $P_{1,t}$ that the vertices $x$ and $y$ don't move: they are also vertices of $P_{1,t}$. Moreover, the edge $[x,y]$ can be separated from $F$ by some affine hyperplane: denote $H^+$ the half-space delimited by this hyperplane, where $[x,y]$ belongs. Then $P \cap H^+=P_{1,t}\cap H^+$ for any $t>0$. In particular, $[x,y]$ is also an edge of $P_{i,t}$.

But if $P_{i,t}$ was homothetic to $P$, then to every edge $e$ of $P$, would correspond an edge $e_t$ of $P_{1,t}$, whose length would be $|e_t|=\lambda |e|> |e|$. However, $[x_t, y_t]=[x,y]$ has same length as $[x,y]$.

\vspace{3mm}
Conversely, let's prove that $\Delta$ is not weakly decomposable (where $\Delta$ is an $n$-simplex). Assume $K\in \mathcal{K}_0^n$ is such that $S_{K+\Delta} << S_{\Delta}$. Since $S_K<< S_{K+\Delta}$ always holds, we deduce that $supp(S_K)\subset supp( S_{\Delta})=:\{u_1, ... , u_{n+1}\}$; in particular, $supp(S_K)$ is finite and $K$ is a polytope. Since $K$ is bounded, $supp(S_K)$ has at least $n+1$ elements: therefore $supp(S_K)=supp(\Delta)$, which implies that $K$ is a simplex, with same outer normal unit vectors as $\Delta$, hence $K$ and $\Delta$ are homothetic. %(see claim \ref{claim:simplex})
\end{proof}
% Note: another proof that $\Delta$ is not weakly decomposable is as follows. We have seen that $b_2(\Delta_n)=1$ (corollary of ``claim $2$'' and proposition $1$), and that $b_2(K)>1$ for any weakly decomposable body (theorem $7$).
%
%supprimer le Claim 16 ...
%The following claim is folklore, but we leave a proof for completeness.
%This claim was used in the above proof of Lemma~\ref{lem:polyisweaklynotsimplex}.

\subsection{A remark about the cube and the $l_2$-ball}
\label{appendix:cube}

In Claim \ref{claim:cube}, we have shown that $b_2(C)=\frac{n}{n-1}$, where $C=[0,1]^n$ is the unit cube. Since $b(C)=n=\max_K b(K)$, one may wonder whether $C$ is also maximal for $b_2$, but the $l_1$-ball gives a counterexample (see Lemma \ref{fenchel} and the example afterwards). One may then ask whether $b_2(C)$ is maximal for $K\in \mathcal{Z}_0^n$, where $\mathcal{Z}_0^n$ denotes the class of $n$-dimensional zonoids (a zonotope $Z\subset \R^n$ is a sum of segments, and a zonoid is a limit of zonotopes, for Hausdorff convergence\footnote{ in other words: $\mathcal{Z}^n$ is the smallest (topologically) closed subclass of $\mathcal{K}^n$ containing all segments, and closed under Minkowski addition}, if $\mathcal{Z}^n \subset \mathcal{K}^n$ denotes the class of zonoids in $\R^n$, then denote $\mathcal{Z}_0^n=\mathcal{Z}^n \cap \mathcal{K}_0^n$). %If this is a case, it would imply that a weak form of 

Some authors (see \cite{FMMZ}) have recently been investigating affine-invariant constants related to $b_2$. Namely, define $b_{\mathcal{Z}}(K)=\sup_{(A,B)} \frac{V(A,B,K[n-2])V(K)}{V(A,K[n-1])V(B,K[n-1])}$, where the supremum is over pairs of zonoids (or equivalently, over pairs $(A,B)\in (\mathcal{Z}^n \setminus \R^n)^2)$.
Note that $b_{\mathcal{Z}}(K) \leq b_2(K)$.

The calculation of claim \ref{claim:cube} shows that $b_2(C)=b_{\mathcal{Z}}(C)$, but it is unclear whether this is necessarily the case for any $K$, even restricting to $K\in \mathcal{Z}_0^n$. As an example, consider $K=B_2^n$, the Euclidean ball. One may easily check (applying \eqref{eqn:proj1}, \eqref{eqn:proj2}, and multilinearity) that
$b_{\mathcal{Z}}(B_2^n)=\frac{n}{n-1} \frac{\kappa_n \kappa_{n-2}}{\kappa_{n-1}^2}=\frac{W_{n-2}}{W_{n-1}}$ (so in particular, $b_{\mathcal{Z}}(B_2^n)<\frac{n}{n-1}=b_{\mathcal{Z}}(C$)), where $W_n=\int_0^{\pi/2} \cos(\theta)^n d\theta$.

We note that \cite[Theorem 1.2]{AFO} suggests that $b_2(B_2^n)=b_{\mathcal{Z}}(B_2^n)$, but whether or not equality holds seems unknown. More generally, whether or not $b_2(K)=b_{\mathcal{Z}}(K)$ for all $K\in \mathcal{Z}_0^n$, (or even for all $K\in \mathcal{K}_0^n$), is unknown.

According to \cite[Theorem 3.9]{FMMZ},  $\max_{K\in \mathcal{Z}_0^n} b_{\mathcal{Z}}(K)=\frac{n}{n-1}$ is equivalent to 
$$\frac{\Vol_{n-2}(\partial(\pi_{u^{\perp}} K))}{\Vol_{n-1}(\pi_{u^{\perp}}K)} \leq \frac{\Vol_{n-1}(\partial K)}{\Vol_n(K)} , \hspace{3mm} \text{for all } K \in \mathcal{Z}_0^n \text{ and all } u\in \Sph.$$
The above inequality is false for some $K\in \mathcal{K}_0^n$ (see the cases of equality discussed in \cite{FGM} after [Proposition 3.1] therein), but may hold within the restricted class of zonoids.

%$b_{\mathcal{Z}}(B_2^n)=\frac{n}{n-1} \frac{\kappa_n \kappa_{n-2}}{\kappa_{n-1}^2}=\frac{W_{n-2}}{W_{n-1}$, where $W_n=W_n=\int_0^{\pi/2} \cos(\theta)^n d\theta$.

%We note that \cite[Theorem 1.2]{AFO} suggests that $b_2(B_2^n)=b_{\mathcal{Z}}(B_2^n)=\frac{W_{n-2}}{W_{n-1}$, but the exact value of $b_2(B_2^n)$ seems unknown.
%Then $b_2(B_2^n)\geq \sup_K \frac}$ $K_{\epsilon}=Conv(B_2^{n-1}, \epsilon e_n )$, where we identify $\pi_{e_n^{\perp}(B_2^n)=B_2^{n-1}:=\pi B_2^n$. This example shows that $b_2(B_2^n)\geq \frac{n}{n-1} \frac{}$

\end{document}